\newtheorem{theorem}{Theorem}
\newtheorem{proposition}[theorem]{Proposition}
\newtheorem{lemma}[theorem]{Lemma}
\theoremstyle{definition}
\newtheorem{definition}[theorem]{Definition}
\newtheorem{remark}[theorem]{Remark}
\DeclareMathOperator{\spt}{spt}
\DeclareMathOperator{\sing}{sing}
\DeclareMathOperator{\reg}{reg} 
\newcommand{\e}{\operatorname{e}}
\newcommand{\wt}{\widetilde}
\newcommand{\pr}{\partial}
\newcommand{\Lap}{\Delta}
\newcommand{\Ric}{\operatorname{Ric}}
\DeclareMathOperator{\sn}{sn}
\DeclareMathOperator{\cs}{cs}
\DeclareMathOperator{\ct}{ct}
\DeclareMathOperator{\tn}{tn}
\begin{document}

\title[Half-space intersection properties]{Half-space intersection properties for minimal hypersurfaces}
\author{Keaton Naff}
\address{Massachusetts Institute of Technology, Cambridge, MA, USA}
\email{kn2402@mit.edu}
\author{Jonathan J. Zhu}
\address{University of Washington, Seattle, WA, USA}
\email{jonozhu@uw.edu}

\begin{abstract}
We prove ``half-space" intersection properties in three settings: the hemisphere, half-geodesic balls in space forms, and certain subsets of Gaussian space. For instance, any two embedded minimal hypersurfaces in the sphere must intersect in every closed hemisphere. Two approaches are developed: one using classifications of stable minimal hypersurfaces, and the second using conformal change and comparison geometry for $\alpha$-Bakry-\'{E}mery-Ricci curvature. 

Our methods yield the analogous intersection properties for free boundary minimal hypersurfaces in space form balls, even when the interior or boundary curvature may be negative. 

Finally, Colding and Minicozzi recently showed that any two embedded shrinkers of dimension $n$ must intersect in a large enough Euclidean ball of radius $R(n)$. We show that $R(n) \leq 2 \sqrt{n}$.  
\end{abstract}
\maketitle

\section{Introduction}

The classical Frankel property for minimal hypersurfaces asserts that any two closed, embedded, minimal hypersurfaces in the sphere must intersect. This important result, which has since been generalized to other settings, captures a fundamental relationship between minimal embeddings and positive curvature. In this note, we investigate the Frankel property for minimal hypersurfaces in three contexts. In each circumstance, we show that a stronger intersection property holds: embedded minimal hypersurfaces must intersect in every (appropriately interpreted) half-space of the given ambient space.

The first setting we consider is the classical one: the sphere. 

\begin{theorem}\label{thm:hemisphere}
Suppose $\Sigma_0, \Sigma_1 \subset \mathbb{S}^{n+1}$ are closed (compact without boundary)  embedded minimal hypersurfaces of the round sphere and $\mathbb{S}^{n+1}_+ \subset \mathbb{S}^{n+1}$ is any hemisphere. Then 
\begin{equation}
\overline{\mathbb{S}^{n+1}_{+}} \cap \Sigma_0 \cap \Sigma_1 \neq \emptyset.
\end{equation}
\end{theorem}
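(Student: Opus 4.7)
The plan is to prove this by contradiction using Frankel's classical second-variation-of-arclength argument, carried out inside the closed hemisphere. Suppose $\Sigma_0\cap\Sigma_1\cap\overline{\mathbb{S}^{n+1}_+}=\emptyset$ and set $\Sigma_i^+:=\Sigma_i\cap\overline{\mathbb{S}^{n+1}_+}$. A preliminary observation is that each $\Sigma_i^+$ is nonempty: if $\ell$ is the linear functional on $\mathbb{R}^{n+2}$ whose zero set is the equator $E=\partial\mathbb{S}^{n+1}_+$, then $\Hess_{\mathbb{S}^{n+1}}\ell=-\ell\,g$, combined with minimality this gives $\Delta^{\Sigma_i}\ell=-n\ell$, so $\int_{\Sigma_i}\ell\,dA=0$ and $\Sigma_i$ cannot lie in an open hemisphere. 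Hence $\Sigma_0^+$ and $\Sigma_1^+$ are nonempty, compact, and disjoint subsets of the geodesically convex closed hemisphere, and a minimizing great-circle arc $\gamma\colon[0,L]\to\overline{\mathbb{S}^{n+1}_+}$ with $\gamma(0)=p_0\in\Sigma_0^+$, $\gamma(L)=p_1\in\Sigma_1^+$ realizes $d(\Sigma_0^+,\Sigma_1^+)>0$.

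Because $E$ is totally geodesic, $\gamma$ is either contained in $E$ or meets $E$ at most at its endpoints. The hypothesis $\Sigma_0\cap\Sigma_1\cap\overline{\mathbb{S}^{n+1}_+}=\emptyset$ is open in the space of hemispheres (parametrized by the unit axis vector in $\mathbb{R}^{n+2}$), while each of the degenerate configurations---$\gamma\subset E$, $p_i\in E$, or $\Sigma_i$ failing to be transverse to $E$---is cut out by at least one scalar equation on that parameter space. After rotating the hemisphere by a small generic amount, I may therefore assume $\gamma$ lies in the open hemisphere and $p_0,p_1$ are interior points of $\Sigma_0,\Sigma_1$, so that $\gamma$ meets each $\Sigma_i$ orthogonally at $p_i$.

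Frankel's argument now applies directly: pick an orthonormal basis $\{e_j(0)\}_{j=1}^n$ of $T_{p_0}\Sigma_0$ and parallel transport along $\gamma$ to obtain vector fields $e_j(t)$. Parallel transport preserves $\gamma'^\perp$, so $e_j(L)\in T_{p_1}\Sigma_1$. Summing the second variation of arc length for the $n$ endpoint-admissible variations with initial velocity $e_j$ yields
\[
0\le\sum_{j=1}^n L_j''(0)=-\int_0^L\Ric_{\mathbb{S}^{n+1}}(\gamma',\gamma')\,dt+\text{(endpoint mean-curvature terms)}=-nL,
\]
using $\Ric_{\mathbb{S}^{n+1}}=n\,g$ and the minimality of the $\Sigma_i$. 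Since $L>0$, this is the desired contradiction.

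The main obstacle I anticipate is justifying the generic-position step, and in particular ruling out the degenerate case $\gamma\subset E$: one must verify that rotating the hemisphere by a generic small amount breaks any such configuration without resurrecting intersection in the closed hemisphere. If this direct perturbation proves delicate, I would instead follow the paper's second strategy, conformally changing the metric on the open hemisphere by a factor that blows up on $E$ so that the equator is pushed to infinity while the $\alpha$-Bakry--\'Emery--Ricci curvature remains positive. Then $\Sigma_0,\Sigma_1$ become compact $f$-minimal hypersurfaces in a complete weighted manifold of positive $\alpha$-BER curvature, and the weighted Frankel property forces them to intersect inside the open hemisphere with no perturbation required.
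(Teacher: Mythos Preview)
Your direct Frankel argument has a genuine gap at the perturbation step. The assertion that ``$p_i\in E$'' is cut out by a single scalar equation on the space of hemispheres is not justified, and is in general false. The minimizing pair $(p_0,p_1)$ is the solution of a constrained optimisation problem in which the constraint $\langle p_0,a\rangle\ge 0$ may be \emph{active with nonzero multiplier}: concretely, if $\langle\gamma'(0),v\rangle<0$ (where $v\in T_{p_0}\Sigma_0$ is the inward conormal of $\Sigma_0^+$), then the first-order conditions force the minimizer to remain on the equator for all nearby hemispheres. This is exactly the situation one expects when $\Sigma_0\cap\Sigma_1$ sits just on the far side of $E$ and the two hypersurfaces funnel toward it; the minimum of $d(\Sigma_0^+,\Sigma_1^+)$ is then robustly realised at the equator, and no small rotation of the hemisphere will push both endpoints into the interior. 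So the direct second-variation approach does not go through without a genuinely new idea for handling boundary minimizers.

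Your fallback is essentially the paper's second method (Section~6), and it does work, but several statements need correction. After the conformal change $\tilde g=(\sin\rho)^{-2}g$ with weight $e^{-f}=(\sin\rho)^n$, the $\alpha$-Bakry--\'Emery--Ricci curvature (for $\alpha=-n$) is \emph{zero}, not positive (Lemma~\ref{lem:conformal-change}), and the restricted hypersurfaces $\Sigma_i\cap\mathbb{S}^{n+1}_+$ are complete but certainly not compact. Consequently one cannot invoke a positive-curvature Frankel theorem directly; one must first show the minimum of $\tilde d_{\Sigma_0}+\tilde d_{\Sigma_1}$ is actually attained (this uses the hypothesis that the closures are disjoint in $\overline{\mathbb{S}^{n+1}_+}$), and then the rigidity case of Proposition~\ref{prop:d-maximum-principle} only yields that both $\Sigma_i$ are totally geodesic with respect to the \emph{original} metric $g$. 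The contradiction then comes from the classification of totally geodesic hypersurfaces in $\mathbb{S}^{n+1}$, which forces intersection in every closed hemisphere. The paper's first method (Bernstein-type classification plus a Plateau construction, Sections~\ref{sec:bernstein-sphere-formal}--3.2) is a genuinely different route that you did not pursue.
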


The next setting we consider is that of free boundary minimal hypersurfaces in geodesic balls in space forms.

\begin{theorem}\label{thm:free-boundary}
Let $M \in \{\mathbb{H}^{n+1}, \mathbb{R}^{n+1}, \mathbb{S}^{n+1}\}$ be a simply-connected space-form which has constant curvature $\kappa \in \{-1, 0, 1\}$ respectively.  Suppose $\Sigma_0, \Sigma_1 \subset B^{n+1}_R \subset M$ are free boundary embedded minimal hypersurfaces in a geodesic ball of radius $R \in (0, \mathrm{diam}(M))$ in the space-form $M$. Moreover, suppose $B^{n+1}_{R,+} \subset B^{n+1}_R$ is any half-ball. Then 
\begin{equation}
\overline{B^{n+1}_{R, +}} \cap \Sigma_0 \cap \Sigma_1 \neq \emptyset.
\end{equation}
\end{theorem}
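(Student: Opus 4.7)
The plan is to prove Theorem~\ref{thm:free-boundary} by a Frankel-type minimizing-geodesic argument carried out inside $\overline{B^{n+1}_{R,+}}$, with a carefully chosen conformal change (equivalently, a weight) handling all three space forms --- including the potentially negative ambient Ricci in $\mathbb{H}^{n+1}$ --- uniformly via $\alpha$-Bakry-\'{E}mery-Ricci curvature.

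First I would record the geometric setup. Let $P \subset M$ denote the totally geodesic hyperplane through the center of $B^{n+1}_R$ whose intersection with $B^{n+1}_R$ cuts off $B^{n+1}_{R,+}$. Then $\overline{B^{n+1}_{R,+}}$ is geodesically convex in each of the three space forms, being the intersection of the convex ball $\overline{B^{n+1}_R}$ with the convex half-space bounded by $P$; its boundary splits into $P \cap \overline{B^{n+1}_R}$ (totally geodesic) and $\partial B^{n+1}_R \cap \overline{B^{n+1}_{R,+}}$ (totally umbilic with mean curvature pointing into $B^{n+1}_{R,+}$). Moreover $P \cap \overline{B^{n+1}_R}$ is itself a free boundary minimal hypersurface of $B^{n+1}_R$, being totally geodesic and meeting $\partial B^{n+1}_R$ orthogonally. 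Applying the ordinary (non-half-ball) free boundary Frankel property to the pairs $(\Sigma_i, P \cap \overline{B^{n+1}_R})$ then ensures that each $\Sigma_i$ meets $\overline{B^{n+1}_{R,+}}$.

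Next, assume for contradiction that $\Sigma_0 \cap \Sigma_1 \cap \overline{B^{n+1}_{R,+}} = \emptyset$. Since $\overline{B^{n+1}_{R,+}}$ is geodesically convex and the compact sets $\Sigma_i \cap \overline{B^{n+1}_{R,+}}$ are nonempty and disjoint, there is a minimizing geodesic $\gamma : [0, L] \to \overline{B^{n+1}_{R,+}}$ realizing their distance, and $\gamma$ is perpendicular to each $\Sigma_i$ at its endpoint (with orthogonality interpreted via the free boundary condition if the endpoint lies on $\partial B^{n+1}_R$, or via reflection symmetry if it lies on $P$). I would then apply the second variation of length along $\gamma$ in $n$ parallel orthonormal directions, weighted by a conformal factor $\varphi$ on $M$ for which the associated $\alpha$-Bakry-\'{E}mery-Ricci curvature is strictly positive throughout $\overline{B^{n+1}_{R,+}}$. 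The interior contribution becomes a positive weighted Ricci integrand, the endpoint terms vanish by (weighted) minimality and the free boundary condition, and any interior contact of $\gamma$ with $P$ or $\partial B^{n+1}_R$ contributes nonpositively via the convexity of these faces in the weighted geometry. The resulting strict negativity of the second variation contradicts that $\gamma$ realizes the distance.

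The main obstacle is selecting $\varphi$ uniformly across the three ambient geometries. In $\mathbb{H}^{n+1}$, especially for large $R$, the ambient Ricci is strongly negative and the mean curvature of $\partial B^{n+1}_R$ is relatively small, so a constant or naive spherical weight cannot both promote the weighted Ricci to positivity and preserve the sign of the weighted boundary mean curvature. I expect the correct $\varphi$ to be a function of the distance from the center of $B^{n+1}_R$ --- cosine, constant, and hyperbolic cosine in the cases $\kappa = 1, 0, -1$ respectively --- tuned to $R$, with the parameter $\alpha$ supplying the extra flexibility needed to balance the interior Bakry-\'{E}mery contribution against the boundary correction along $\partial B^{n+1}_R$.
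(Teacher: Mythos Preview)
Your overall strategy---a weighted length-variation argument---is the paper's second approach (Section~6), but two of your key choices would make it fail. The orthogonality claim at endpoints on the flat face $P$ is wrong: the $\Sigma_i$ are free boundary minimal in the \emph{full} ball and $P$ is an arbitrary diametrical hyperplane, so there is no ``reflection symmetry'' and no reason for $\Sigma_i$ to meet $P$ orthogonally. If the minimising geodesic ends at $p\in\Sigma_i\cap P$, first variation does not force $\gamma'(0)\perp T_p\Sigma_i$, and the second-variation endpoint term (the second fundamental form of $\Sigma_i$ evaluated on $\gamma'$) has no sign. This alone breaks the argument. Your opening step is also circular: you invoke the full-ball free boundary Frankel property to guarantee $\Sigma_i\cap\overline{B^{n+1}_{R,+}}\neq\emptyset$, but in $\mathbb{H}^{n+1}$ and for $R>\pi/2$ in $\mathbb{S}^{n+1}$ that property was not previously known---it is a \emph{consequence} of the present theorem.

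The paper's resolution is to take the conformal factor as a function of $\rho=d(\cdot,P)$ rather than of the radial distance: set $\tilde g=(\sn\rho)^{-2}g$ with weight $e^{-f}=(\sn\rho)^n$. This blows $P$ to infinity, so the half-ball becomes a complete manifold (isometric to $\mathbb{H}^{n+1}$) and the endpoint problem at $P$ simply disappears; simultaneously it makes the spherical cap $S\subset\partial B^{n+1}_R$ \emph{totally geodesic} in $\tilde g$, so the domain remains geodesically convex; and it yields exactly $\widetilde{\Ric}^{\,\alpha}_f=0$ for the critical negative value $\alpha=-n$ (Lemma~\ref{lem:conformal-change}). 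Strict positivity is neither obtainable nor needed: in the borderline case $\alpha=-n$, $\widetilde{\Ric}^{\,\alpha}_f=0$, the rigidity statement in Proposition~\ref{prop:d-maximum-principle} forces both $\Sigma_i$ to be totally geodesic in the \emph{original} metric $g$, after which they intersect by the classification of totally geodesic free boundary hypersurfaces. The obstruction you flagged for radial weights in $\mathbb{H}^{n+1}$ is genuine; the fix is not a cleverer radial function but changing which distance the weight depends on.
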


It is somewhat surprising that this improved Frankel property holds in the presence of nonpositive curvature. In particular, this results holds for geodesic balls in hyperbolic space, which have negative interior curvature. It also holds for large geodesic balls ($R > \frac{\pi}{2}$) in the sphere, which have negative boundary curvature. In general of course, one should not expect a Frankel property to hold in the presence of nonpositive curvature (either in the interior or on boundary). In both cases described above, it seems that there is enough positive curvature (and homogeneity) to overcome the negative curvature regions. We also remark that even the classical Frankel property (asserting an intersection \textit{anywhere} in the ball) has not previously been observed in the presence of negative curvature.

As noted by Assimos \cite{assimos2020intersection}, the half-space intersection properties above immediately imply a \textit{two-piece property}: If $\Sigma$ is a connected (free boundary) minimal hypersurface in $\mathbb{S}^n$ or $B^{n+1}_R$, then any half-space divides $\Sigma$ into two connected components (its intersection with that half-space and the complementary half-space). Two-piece properties were previously proven by Ros \cite{Ros95} and Lima-Menezes \cite{LM21, lima2021two}, and featured in Brendle's classification of genus zero shrinkers \cite{Br16}.

The last context we investigate is that of shrinkers in Euclidean space. We find an explicit radius $2\sqrt{n}$ for Colding-Minicozzi's `strong Frankel property' for self-shrinkers \cite{CM23}:

\begin{theorem}\label{thm:shrinkers}
Suppose $\Sigma_0, \Sigma_1 \subset \mathbb{R}^{n+1}$ are complete, properly embedded $n$-dimensional shrinkers of Euclidean space. Then 
\begin{equation}
\overline{B^{n+1}_{2\sqrt{n}}} \cap \Sigma_0 \cap \Sigma_1 \neq \emptyset.
\end{equation} 
\end{theorem}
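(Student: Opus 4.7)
My plan is to use the conformal change strategy from the abstract. Set $\tilde g := e^{-|x|^2/(2n)} g_{\mathrm{eucl}}$ on $\mathbb R^{n+1}$, chosen so that $\tilde g = e^{-2f/n} g_{\mathrm{eucl}}$ for the Gaussian potential $f := |x|^2/4$. This scaling is exactly the one under which $f$-minimal hypersurfaces coincide with $\tilde g$-minimal hypersurfaces, so $\Sigma_0, \Sigma_1$ become minimal in $(\mathbb R^{n+1}, \tilde g)$. A direct conformal change computation of the Ricci tensor yields
\[ \widetilde{\Ric} = g_{\mathrm{eucl}} + \frac{n-1}{4n^2}\bigl(x \otimes x - |x|^2 g_{\mathrm{eucl}}\bigr), \]
and estimating the worst case $X \perp x$ gives $\widetilde{\Ric} \geq \bigl(1 - \tfrac{(n-1)|x|^2}{4n^2}\bigr) g_{\mathrm{eucl}}$, which is $\geq \tfrac{1}{n} g_{\mathrm{eucl}} \geq \tfrac{1}{n}\tilde g$ throughout $\overline{B^{n+1}_{2\sqrt n}}$, with equality at the boundary sphere along tangent directions.

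Two geometric facts make the radius $2\sqrt n$ natural. First, every properly embedded shrinker $\Sigma$ enters the strictly smaller ball $\overline{B^{n+1}_{\sqrt{2n}}}$: the shrinker identity $\Delta_\Sigma|x|^2 = 2n - |x^\perp|^2$, combined with the maximum principle at an interior minimum of $|x|^2|_\Sigma$ (where $x^\perp = x$), forces $|x|^2 \leq 2n$. Second, the ball $\overline{B^{n+1}_{2\sqrt n}}$ is $\tilde g$-geodesically convex: solving the $\tilde g$-geodesic equation one finds
\[ \tfrac{1}{2}\tfrac{d^2}{dt^2}|\gamma|^2 = |\dot\gamma|^2\bigl(1 - \tfrac{|\gamma|^2}{2n}\bigr) + \tfrac{\langle\gamma, \dot\gamma\rangle^2}{n}, \]
which equals $-|\dot\gamma|^2 < 0$ at any tangential touch with $\partial B^{n+1}_{2\sqrt n}$ (where $|\gamma|^2 = 4n$ and $\langle \gamma, \dot\gamma\rangle = 0$), so $\tilde g$-geodesics cannot escape the ball.

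Assuming for contradiction $\Sigma_0 \cap \Sigma_1 \cap \overline{B^{n+1}_{2\sqrt n}} = \emptyset$, the sets $K_j := \Sigma_j \cap \overline{B^{n+1}_{2\sqrt n}}$ are compact, nonempty and disjoint. I would produce a minimizing $\tilde g$-geodesic $\tilde\gamma$ from $K_0$ to $K_1$ inside $\overline{B^{n+1}_{2\sqrt n}}$. The classical Frankel second variation, taken with $\tilde g$-parallel orthonormal variations $V_i$ along $\tilde\gamma$, gives
\[ \sum_i \tfrac{d^2}{ds^2}\big|_{s=0} L_{\tilde g}(\tilde\gamma_s)_i = -\int_0^{\tilde L} \widetilde{\Ric}(\tilde\gamma', \tilde\gamma')\,d\tilde s + \tilde H_{\Sigma_0}(\nu_0) + \tilde H_{\Sigma_1}(\nu_1) \leq -\tfrac{\tilde L}{n} < 0, \]
where both boundary mean-curvature terms vanish by $\tilde g$-minimality of the $\Sigma_j$, and $\widetilde{\Ric}(\tilde\gamma', \tilde\gamma') \geq 1/n$ along $\tilde\gamma$. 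This contradicts the minimality of $\tilde\gamma$.

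The main obstacle is ensuring $\tilde\gamma$ can be taken with both endpoints in the open ball $B^{n+1}_{2\sqrt n}$, so that first-order $\tilde g$-perpendicularity at the endpoints holds. If an endpoint lies on $\partial B^{n+1}_{2\sqrt n}$ the first variation yields only a one-sided condition, breaking the standard Frankel argument. I expect to resolve this either by a nested ball and limit argument, applying the full argument on $\overline{B^{n+1}_{2\sqrt n + \epsilon}}$ for every $\epsilon > 0$ and then invoking the finite intersection property on the decreasing family $\{\Sigma_0 \cap \Sigma_1 \cap \overline{B^{n+1}_{2\sqrt n + \epsilon}}\}_{\epsilon > 0}$, or by directly analyzing the constrained first and second variation at a boundary endpoint using the shrinker equation together with the positive $\tilde g$-second fundamental form of $\partial B^{n+1}_{2\sqrt n}$.
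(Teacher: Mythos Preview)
Your computations are correct as far as they go: under the Gaussian conformal metric $\tilde g=e^{-|x|^2/(2n)}g_{\mathrm{eucl}}$ shrinkers are $\tilde g$-minimal, your formula for $\widetilde{\Ric}$ is right and gives $\widetilde{\Ric}\geq \tfrac{1}{n}\tilde g$ on $\overline{B_{2\sqrt n}}$, and the ball is strictly $\tilde g$-convex. But the boundary endpoint issue you flag is a genuine gap, and neither of your proposed fixes closes it. The nested-ball argument on $\overline{B_{2\sqrt n+\epsilon}}$ runs into exactly the same difficulty on $\partial B_{2\sqrt n+\epsilon}$: nothing prevents the $\tilde g$-nearest point on $K_0$ to $K_1$ from lying on that outer sphere. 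And at a boundary endpoint $p_0\in\partial B_{R}$ the first variation only gives $\tilde\gamma'(0)\perp T_{p_0}(\Sigma_0\cap\partial B_R)$ together with a sign condition in the conormal direction, so the Frankel second-variation sum no longer produces the clean $\tilde H_{\Sigma_0}=0$ boundary term. The convexity of the sphere does not obviously compensate, and you have not shown that it does. The root cause is that the Gaussian metric is \emph{incomplete} (its radial length to infinity is finite), so one cannot simply minimise $\tilde g$-distance over all of $\Sigma_0\times\Sigma_1$ either: the infimum need not be attained, and indeed $\Sigma_0\cap\Sigma_1$ may well be nonempty outside $B_{2\sqrt n}$ under your contradiction hypothesis.

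The paper circumvents this entirely by using a \emph{different} conformal change. Setting $\rho=4n-|x|^2$ and $\tilde g=\rho^{-2}g_{\mathrm{eucl}}$ turns the open ball $B_{2\sqrt n}$ into a complete hyperbolic space (the Poincar\'e model), so there is no boundary to contend with. The price is that shrinkers are no longer $\tilde g$-minimal but rather $(f{+}\gamma)$-minimal for the weight $e^{-f-\gamma}=\rho^{n}e^{-|x|^2/4}$; one then checks that the $\alpha$-Bakry--\'Emery--Ricci tensor with $\alpha=-n$ satisfies $\widetilde{\Ric}^{\,-n}_{f+\gamma}\geq 0$ (Section~\ref{sec:gaussian-conformal}) and runs a weighted Frankel/distance-function argument via the drift Laplacian (Theorem~\ref{thm:d-super} and Proposition~\ref{prop:d-maximum-principle}). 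Completeness guarantees that $\tilde d_{\Sigma_0}+\tilde d_{\Sigma_1}$ actually attains an interior minimum on the region between the two hypersurfaces, which is exactly the step your Gaussian metric cannot supply. The paper's other method avoids geodesics altogether: it solves a Plateau problem in the region between $\Sigma_0$ and $\Sigma_1$ and appeals to a Bernstein-type nonexistence of stable shrinkers in $B_{2\sqrt n}$ (Proposition~\ref{prop:gaussian-current-bernstein}).
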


The reader may note that Theorem \ref{thm:shrinkers} is not quite a true half-space setting, as the sphere of radius $2\sqrt{n}$ is not itself a shrinker. It is natural to wonder whether the intersection property holds for complete embedded shrinkers inside the sphere of radius $\sqrt{2n}$ (which \textit{is} a shrinker). However, the radius $2\sqrt{n}$ appears to be sharp for any method which utilises the inside of the sphere only (see Remark \ref{rem:shooting-method} below). A genuine half-space property for shrinkers indeed holds: two complete embedded $n$-dimensional shrinkers in $\mathbb{R}^{n+1}$ must intersect in any halfspace $\mathbb{R}^{n+1}_+ \subset \mathbb{R}^{n+1}$. This was proven recently in work of Choi-Haslhofer-Hershkovits-White \cite{CHHW22}, so we will not discuss its proof here, although it also follows from our methods.

The classical Frankel property has at least three well-known proofs, respectively using:
\begin{enumerate}
\item Bernstein theorems, which classify stable minimal hypersurfaces; 
\item Length variation (applied either to a connecting geodesic, or the distance to a minimal hypersurface);
\item Reilly's formula. 
\end{enumerate}

In \cite{assimos2020intersection}, Assimos proposed an alternative approach to proving Theorem \ref{thm:hemisphere}. 
In this note, however, we will adapt the first two classical methods above. 

For method (1), the first step is to establish a suitably general Bernstein result in the appropriate half-space for varifolds (Propositions \ref{prop:current-bernstein}, \ref{prop:fb-current-bernstein}, and \ref{prop:gaussian-current-bernstein} below). The key is that the spaces involved support only a very limited collection of stable minimal hypersurfaces; in particular, if two minimal hypersurfaces do not intersect, then solving a certain Plateau problem on the region between will detect a contradiction. In a half-space context, this proof strategy has previously been used in the Gaussian setting \cite{CM23, CHHW22} and also in the free boundary, Euclidean setting \cite{LM21, lima2021two}.

For method (2), we consider the distance function to a minimal hypersurface. Classically, this distance function is superharmonic for minimal hypersurfaces without boundary, in an ambient space of nonnegative Ricci curvature. In the cases that interest us, where the hypersurfaces have boundary, we will need to make a conformal change to adapt this technique. The conformal change blows the fixed boundary to infinity and, crucially, has \textit{Bakry-\'{E}mery-}Ricci curvature which enjoys a sufficient positivity condition. This approach appears to a descendent of Ilmanen's barrier principle and ``moving around barriers'' (\cite{Ilm96}). We note that certain Frankel properties were proven in the positive Bakry-\'{E}mery-Ricci setting by Wei--Wylie \cite{WW09} and Moore-Woolgar \cite{MW21}; our results in this regard are slightly more general. 

We remark that our second approach (2) seems to fit into a broader theme of exploiting conformal and weighted techniques in the study of minimal hypersurfaces, (various forms of) nonnegative curvature, and their relationship. We highlight, for instance, the conformal metric introduced by Fischer-Colbrie \cite{FC85} in the study of minimal surfaces in 3-manifolds with finite index, or the weighted minimal slicings used by Schoen and Yau in their proof positive mass theorem \cite{SY79}. Recently, nonnegative $\alpha$-Bakry-\'Emery-Ricci played a crucial role in one proof of the stable Bernstein theorem for minimal immersions into $\mathbb{R}^4$ \cite{CMR22}. A conformal change that yields positive scalar curvature played a crucial role in another proof \cite{CL23} (see also \cite{CL24} for the first proof).  

In the parabolic setting, a similar conformal change can be used to prove Ilmanen's localized avoidance principle for mean curvature flow, as detailed in work of Chodosh-Choi-Mantoulidis-Schulze \cite[Appendix C]{CCMS20}. The test function used in \cite{CCMS20} is also similar to the test function we use for Theorem \ref{thm:shrinkers}, and indeed these methods relate back to Ilmanen's concept of `moving around barriers' \cite{Ilm96} for minimal hypersurfaces. In particular, there may be a proof of Theorem \ref{thm:shrinkers} that uses mean curvature flow by exploiting this similarity, however we do not pursue it here. 

We believe that it would be interesting to find a proof of half-space Frankel properties using the Reilly formula method (3), but we do not address it in this note. 

\subsection{Some more general results} 

In each of the settings above, we need only work with hypersurfaces (properly embedded) inside the half-space in question. Specifically, we will show the somewhat stronger results: 

\begin{theorem}\label{thm:hemisphere-more-general}
Let $\mathbb{S}^{n+1}_+$ be a round hemisphere and suppose $\Sigma_0, \Sigma_1 \subset \mathbb{S}^{n+1}_+$ are properly embedded $(\partial \Sigma_i \subset \partial \mathbb{S}^{n+1}_+)$  smooth minimal hypersurfaces. If the boundaries do not intersect, $\partial \Sigma_0 \cap \partial \Sigma_1 = \emptyset$, then the minimal hypersurfaces must intersect in the interior, $\Sigma_0 \cap \Sigma_1 \neq \emptyset$.
\end{theorem}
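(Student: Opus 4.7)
The plan is to adopt approach (1) of the introduction: assuming the two hypersurfaces are disjoint, I would solve a Plateau problem on a region between them to produce a stable minimal varifold in the hemisphere, and reach a contradiction by invoking the Bernstein-type classification of Proposition \ref{prop:current-bernstein}.

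Suppose, for contradiction, that $\Sigma_0\cap\Sigma_1=\emptyset$. Combined with $\partial\Sigma_0\cap\partial\Sigma_1=\emptyset$ and the properness of each $\Sigma_i$, this makes $\Sigma_0,\Sigma_1$ disjoint closed subsets of the compact manifold-with-boundary $\overline{\mathbb{S}^{n+1}_+}$. Tracing any continuous path in $\overline{\mathbb{S}^{n+1}_+}$ from a point of $\Sigma_1$ to a point of $\Sigma_0$ and restricting to a subarc whose interior avoids $\Sigma_0\cup\Sigma_1$ shows that there exists a connected component $\Omega$ of $\overline{\mathbb{S}^{n+1}_+}\setminus(\Sigma_0\cup\Sigma_1)$ whose topological boundary meets both $\Sigma_0$ and $\Sigma_1$. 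Then $\partial\Omega$ decomposes as the union of a piece $\Gamma_0\subset\Sigma_0$, a piece $\Gamma_1\subset\Sigma_1$, and possibly a piece $\Gamma_{\partial}\subset\partial\mathbb{S}^{n+1}_+$.

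Next I would minimize the mass of integer-multiplicity rectifiable $n$-currents whose supports lie in $\overline\Omega$ and separate $\Gamma_0$ from $\Gamma_1$, imposing no prescribed boundary on $\partial\mathbb{S}^{n+1}_+$ (equivalently, minimize the relative perimeter of a Caccioppoli set $E\subset\Omega$ sandwiched between $\Gamma_0$ and $\Gamma_1$). Standard lower semicontinuity and compactness produce a minimizer $V$; interior and boundary regularity make $V$ a stable minimal integral varifold, smooth away from a singular set of codimension at least seven, and satisfying a free boundary condition where it meets $\partial\mathbb{S}^{n+1}_+$. Since $\Gamma_0$ and $\Gamma_1$ are disjoint and compact, the isoperimetric lower bound forces $V$ to have positive mass.

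With $V$ in hand, I appeal to Proposition \ref{prop:current-bernstein}, which classifies (the very limited) stable minimal free-boundary varifolds in $\overline{\mathbb{S}^{n+1}_+}$. Since the equator is totally geodesic in $\mathbb{S}^{n+1}$, the boundary second fundamental form vanishes, and testing the free-boundary stability inequality with $\phi\equiv 1$ returns $\int_V(|A|^2+n)\leq 0$, which is impossible on a varifold of positive mass. This yields the required contradiction. The main obstacles are technical rather than conceptual: one must verify that the Plateau minimization genuinely produces a nontrivial separator (as opposed to degenerating onto $\Gamma_0$ or $\Gamma_1$), and that the stability calculation extends robustly across the possibly singular set of $V$ and its behavior near $\partial\mathbb{S}^{n+1}_+$. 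These are precisely the issues that motivate stating Proposition \ref{prop:current-bernstein} at the varifold level rather than for smooth free-boundary hypersurfaces.
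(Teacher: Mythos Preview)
Your overall strategy --- produce a minimiser between the two hypersurfaces and kill it with a Bernstein-type result --- matches the paper's, but there is a structural difference: you minimise with \emph{free boundary} on $\partial\mathbb{S}^{n+1}_+$, whereas the paper solves a Plateau problem with a \emph{fixed} boundary curve $\Gamma\subset\partial\mathbb{S}^{n+1}_+$, chosen to be nowhere totally geodesic.  Your test $\phi\equiv 1$ in the free-boundary stability inequality is a nice observation (since the equator has $A_S=0$, it would show there is \emph{no} free-boundary-stable minimal varifold in the hemisphere, a stronger statement than Proposition~\ref{prop:current-bernstein}).  Note, though, that Proposition~\ref{prop:current-bernstein} is \emph{not} a free-boundary statement: ``stable in $(\mathbb{S}^{n+1}_+,\partial\mathbb{S}^{n+1}_+)$'' there means stability for variations \emph{vanishing} on the equator (Dirichlet), and its conclusion is ``totally geodesic'', not ``nonexistent''.

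The genuine gap is what you flag as merely technical: ruling out that the minimiser degenerates onto $\Sigma_0$ or $\Sigma_1$.  In a free-boundary minimisation over a relative homology class, the barriers $\Sigma_i$ themselves are admissible competitors, and neither your $\phi\equiv 1$ test nor Proposition~\ref{prop:current-bernstein} applies directly to them: the $\Sigma_i$ are not assumed to meet the equator orthogonally, so if the minimiser coincides with $\Sigma_i$ you only recover one-sided \emph{Dirichlet} stability (first variation vanishes for interior variations, so second variation $\ge 0$ there), which via Proposition~\ref{prop:current-bernstein} gives only that $\Sigma_i$ is totally geodesic --- not a contradiction.  The paper closes exactly this loop by a case split (both / one / neither $\Sigma_i$ totally geodesic), and in the mixed case perturbs the totally geodesic $\Sigma_0$ to a strictly mean-convex barrier so that White's maximum principle forbids contact; the nowhere-totally-geodesic choice of $\Gamma$ then rules out the minimiser itself being totally geodesic.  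Your proposal is missing this case analysis and perturbation step, which is where the actual work lies.
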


\begin{theorem}\label{thm:free-boundary-more-general}
Let $M \in \{\mathbb{H}^{n+1}, \mathbb{R}^{n+1}, \mathbb{S}^{n+1}\}$ be a simply-connected space-form which has constant curvature $\kappa \in \{-1, 0, 1\}$ respectively. Let $B^{n+1}_{R, +} \subset M$ denote a geodesic half-ball of radius $R \in (0, \mathrm{diam}(M))$. Then $\partial B^{n+1}_{R, +} = D \cup S$ where $D = \overline{\partial B^{n+1}_{R, +} \setminus \partial B^{n+1}_R}$ is the closed totally geodesic portion of the boundary and $S = \partial B^{n+1}_{R, +} \setminus D$. 

Suppose $\Sigma_0, \Sigma_1 \subset B^{n+1}_{R, +}$ are properly embedded $(\partial \Sigma_i \subset \partial B^{n+1}_{R, +})$ minimal hypersurfaces in $B^{n+1}_{R, +}$ which meet the piece of the boundary $S$ orthogonally. If the boundaries do not intersect, $\partial \Sigma_0 \cap \partial \Sigma_1 = \emptyset$, then the minimal hypersurfaces must intersect in the interior, $\Sigma_0 \cap \Sigma_1 \neq \emptyset$. 
\end{theorem}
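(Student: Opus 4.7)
The plan is to adapt the Bernstein-based strategy (method (1) in the introduction) to the free boundary setting, using Proposition \ref{prop:fb-current-bernstein} as the key classification. Suppose for contradiction that $\Sigma_0 \cap \Sigma_1 = \emptyset$; combined with the hypothesis $\partial \Sigma_0 \cap \partial \Sigma_1 = \emptyset$, this yields $\overline{\Sigma_0} \cap \overline{\Sigma_1} = \emptyset$ inside the compact set $\overline{B^{n+1}_{R,+}}$. Since each $\Sigma_i$ is properly embedded with boundary on $\partial B^{n+1}_{R,+}$, the two hypersurfaces mutually separate $B^{n+1}_{R,+}$. Pick a connected component $\Omega$ of $B^{n+1}_{R,+} \setminus (\Sigma_0 \cup \Sigma_1)$ whose topological boundary meets both $\Sigma_0$ and $\Sigma_1$.

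Inside $\overline{\Omega}$, I would solve a constrained free boundary Plateau problem: minimize mass among integer $n$-currents supported in $\overline{\Omega}$ representing a relative homology class that separates $\Sigma_0$ from $\Sigma_1$, with boundary allowed to slide freely on both $S \cap \overline{\Omega}$ and $D \cap \overline{\Omega}$. Standard existence, compactness, and regularity theory (tolerating a codimension-$7$ singular set in high dimensions) produces a nontrivial stable free boundary minimal hypersurface $T \subset \overline{\Omega}$, meeting $S$ orthogonally. By the strong maximum principle applied interiorly and at the free boundary on $S$, the support of $T$ is disjoint from $\Sigma_0 \cup \Sigma_1$ except possibly along $D$; in particular $T$ is a genuine stable free boundary minimal hypersurface in $B^{n+1}_{R,+}$ distinct from both $\Sigma_i$. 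Applying Proposition \ref{prop:fb-current-bernstein} now forces $T$ to be a rigid totally geodesic piece of the half-ball, and such a $T$ cannot fit strictly between the given $\Sigma_0$ and $\Sigma_1$, delivering the contradiction.

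The main obstacle is ensuring that the Plateau minimizer has enough regularity---and the right boundary structure at the ``corner'' $\overline{D} \cap \overline{S}$---to feed cleanly into Proposition \ref{prop:fb-current-bernstein}, as well as arranging the homology class so that the minimizer is a priori nontrivial. A secondary and essential subtlety, which is in fact the novelty of the theorem, is that Proposition \ref{prop:fb-current-bernstein} must cover the hyperbolic and large spherical half-ball cases, where the interior or the boundary curvature is negative. No classical pointwise positive curvature argument applies in that regime; the stable Bernstein classification must instead extract rigidity from the symmetry of the half-ball across its totally geodesic wall $D$---for example by reflecting $T$ to a closed free boundary minimal hypersurface in the full ball $B^{n+1}_R$, or by a weighted ($\alpha$-Bakry--\'Emery) comparison argument of the sort highlighted in method (2) of the introduction.
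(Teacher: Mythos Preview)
Your high-level strategy matches the paper's method (1), but the proof as written has a genuine gap at the final step, and the boundary conditions you impose on the Plateau problem differ from the paper's in a way that matters.

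\textbf{The contradiction step is not justified.} After you obtain a totally geodesic minimiser $T$, you assert that ``such a $T$ cannot fit strictly between the given $\Sigma_0$ and $\Sigma_1$''. This is not true a priori: if neither $\Sigma_i$ is totally geodesic, a totally geodesic half-disk through the centre can perfectly well separate them in $B^{n+1}_{R,+}$. The paper extracts the contradiction only after considerable additional work. First, it runs a case analysis on whether either $\Sigma_i$ is already totally geodesic; if $\Sigma_0$ is, it is replaced by a strictly mean convex free boundary perturbation (constructed via conformal maps in Appendix \ref{sec:conformal-deformations}) so that the maximum principle prevents the minimiser from touching it. Second, the paper does \emph{not} let the Plateau boundary slide freely on $D$: it fixes a Dirichlet datum $\Gamma \subset D$ chosen to be \emph{nowhere totally geodesic}. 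The contradiction then comes from a boundary analysis: the totally geodesic minimiser must have totally geodesic boundary in $D$, which cannot lie in $\Gamma$, cannot lie in the perturbed $\Gamma_0$, and (after a recursive reduction from Case 3 to Case 2) cannot lie in $\Gamma_1$; hence it meets the open face $H' \setminus \Gamma$, where the strong maximum principle forces it into $D$, a contradiction.

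\textbf{Your free boundary condition on $D$ is problematic.} With boundary free on $D$ as well as $S$, you lose the nowhere-geodesic anchor $\Gamma$ that drives the paper's endgame, and you create a new degeneracy: a piece of $D$ itself is totally geodesic and meets $S$ orthogonally, so you must separately rule out the minimiser collapsing into $D$. You also need to check that Proposition \ref{prop:fb-current-bernstein} still applies with free (rather than fixed) boundary on $D$, since its hypotheses specify stationarity in $(N,D)$.

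Finally, your worry that Proposition \ref{prop:fb-current-bernstein} may fail in the hyperbolic or large-sphere cases is unfounded: its proof in the paper uses only the identities $L_\Sigma \sn\rho = |A_\Sigma|^2 \sn\rho$ and $(\partial_\eta - A_S(\nu,\nu))\sn\rho = 0$ from Lemma \ref{lem:space-form-test-function}, which hold uniformly for $\kappa \in \{-1,0,1\}$. No reflection or Bakry--\'Emery input is needed there.
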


\begin{theorem}\label{thm:ball-shrinker-more-general}
Suppose $\Sigma_0, \Sigma_1 \subset B_{2\sqrt{n}}^{n+1} \subset \mathbb{R}^{n+1}$ are properly  embedded $(\partial \Sigma_i \subset \partial B^{n+1}_{2\sqrt{n}})$ smooth hypersurfaces satisfying the shrinker equation. If the boundaries do not intersect, $\partial \Sigma_1 \cap \partial \Sigma_2 = \emptyset$, then the shrinkers must intersect in the interior, $\Sigma_0 \cap \Sigma_1 \neq \emptyset$.
\end{theorem}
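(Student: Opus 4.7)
The plan is to adopt the distance-function / comparison-geometry approach (method (2) in the introduction). Assume for contradiction that $\Sigma_0 \cap \Sigma_1 = \emptyset$. Since $\partial \Sigma_0 \cap \partial \Sigma_1 = \emptyset$ and both shrinkers are compact in $\overline{B^{n+1}_{2\sqrt{n}}}$, their Euclidean distance $d_{\mathrm{Euc}}(\Sigma_0, \Sigma_1)$ is strictly positive and attained at some pair $(q, p) \in \Sigma_0 \times \Sigma_1$; set $L := |p - q|$ and $v := (p - q)/L$.

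I first treat the generic \emph{interior case} $p \in \mathrm{int}(\Sigma_1)$, $q \in \mathrm{int}(\Sigma_0)$, where the minimizing segment $\gamma(s) := q + sv$ meets each $\Sigma_i$ orthogonally, so we may take $\nu_{\Sigma_0}(q) = \nu_{\Sigma_1}(p) = v$. Let $\{\kappa_i\}_{i=1}^n$ be the principal curvatures of $\Sigma_0$ at $q$ with respect to $v$; the focal-point-free condition $1 + L\kappa_i > 0$ holds along the minimizer $\gamma$. The Euclidean Riccati evolution of the shape operator of the equidistant family yields $\Delta d|_p = \sum_i \kappa_i/(1 + L\kappa_i)$, which combines with the identity $\sum \kappa_i/(1 + L\kappa_i) = H_{\Sigma_0}(q) - L \sum \kappa_i^2/(1 + L\kappa_i)$ and the hypersurface formula $\Delta_{\Sigma_1} d|_p = \Delta d|_p - H_{\Sigma_1}(p)$ to give
\begin{equation*}
\Delta_{\Sigma_1} d\big|_p = \bigl[H_{\Sigma_0}(q) - H_{\Sigma_1}(p)\bigr] - L \sum_{i=1}^{n} \frac{\kappa_i^2}{1 + L\kappa_i}.
\end{equation*}
Applying the shrinker equation $H = \tfrac{1}{2}\langle x, \nu\rangle$ at both endpoints collapses the mean-curvature difference to $\tfrac{1}{2}\langle q - p, v\rangle = -L/2$. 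Therefore $\Delta_{\Sigma_1} d|_p \leq -L/2 < 0$, contradicting the interior minimum principle $\Delta_{\Sigma_1} d|_p \geq 0$. Note that this interior computation does not actually invoke the radius $2\sqrt{n}$.

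The radius $R = 2\sqrt{n}$ enters only through the \emph{boundary case}, when the minimizing pair has $p \in \partial \Sigma_1 \subset \partial B^{n+1}_{2\sqrt{n}}$ (or symmetrically $q \in \partial \Sigma_0$). Here $\gamma$ is only perpendicular to $\partial \Sigma_1$ at $p$, not to $T_p \Sigma_1$, and the direct Laplacian argument above fails. To treat it, I pass to the conformal metric $\tilde g := e^{-|x|^2/(2n)} g_{\mathrm{Euc}}$, in which shrinkers are genuine $\tilde g$-minimal hypersurfaces; a direct conformal computation of the Ricci tensor yields
\begin{equation*}
\widetilde{\Ric} \geq \tfrac{1}{n}\, \tilde g \quad \text{on } \overline{B^{n+1}_{2\sqrt{n}}},
\end{equation*}
with $R = 2\sqrt{n}$ being exactly the largest radius at which this positive lower bound survives in every direction tangential to $x$. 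Rerunning the distance-function / second-variation analysis with the $\tilde g$-distance and this strict Ricci positivity then closes the boundary case.

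The principal technical obstacle is the boundary-case analysis itself. The sphere $\partial B^{n+1}_{2\sqrt{n}}$ is not $\tilde g$-convex from within: a direct computation shows that its inward $\tilde g$-second fundamental form at $|x| = 2\sqrt{n}$ is strictly positive, the reverse of the Euclidean sign. Consequently, $\tilde g$-minimizing paths from $\Sigma_0$ to $\Sigma_1$ may touch $\partial B^{n+1}_{2\sqrt{n}}$, and at a boundary endpoint the first-variation perpendicularity to $\Sigma_i$ is diluted by the sphere constraint. Balancing these residual boundary contributions against the strict Ricci lower bound $\widetilde{\Ric} \geq (1/n)\,\tilde g$ is the key step required to recover the contradiction and complete the proof.
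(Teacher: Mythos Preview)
Your interior argument is correct and is essentially the classical weighted Frankel computation (equivalently, $\Ric^\infty_\gamma = \tfrac12 g_{\mathrm{Euc}} > 0$ in the Bakry--\'Emery sense). The gap is the boundary case, which you do not actually carry out: you only state that ``balancing these residual boundary contributions against the strict Ricci lower bound'' is what remains. As written, this is a statement of intent, not a proof.

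More seriously, the approach you sketch for the boundary case cannot be completed. With the Gaussian metric $\tilde g = e^{-|x|^2/(2n)} g_{\mathrm{Euc}}$, the inward second fundamental form of $\partial B_R$ has eigenvalues proportional to $\tfrac{1}{R} - \tfrac{R}{2n}$, which is \emph{negative} for every $R>\sqrt{2n}$; in particular the ball $B_{2\sqrt{n}}$ is strictly geodesically \emph{concave} in $\tilde g$. Thus $\tilde g$-minimising geodesics between the $\Sigma_i$ may exit the ball entirely, and at a boundary endpoint neither the orthogonality nor the sign of the boundary second-variation term is recoverable. Your Ricci lower bound $\widetilde{\Ric}\ge \tfrac1n \tilde g$ is correct but, without convexity, it does not yield a super-solution inequality for the distance function up to the boundary; there is no quantitative ``balancing'' to be done here because the boundary term has the wrong sign by an amount that does not go to zero.

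The paper's variational proof avoids this by using a \emph{different} conformal change, $\tilde g = (R^2 - r^2)^{-2} g_{\mathrm{Euc}}$, which blows the sphere $\partial B_R$ to infinity so that $(B_R,\tilde g)$ is complete (in fact hyperbolic). Shrinkers then become $(f+\gamma)$-minimal with $e^{-f}=(R^2-r^2)^n$, and the relevant curvature condition is not positivity of $\widetilde{\Ric}$ but nonnegativity of the $\alpha$-Bakry--\'Emery--Ricci tensor $\widetilde{\Ric}^\alpha_{f+\gamma}$ at the critical \emph{negative} value $\alpha=-n$; this holds precisely when $R\ge 2\sqrt{n}$. Because the conformal domain is complete with no boundary, no convexity hypothesis is needed and the weighted distance-function argument (Theorem~\ref{thm:d-super} and Proposition~\ref{prop:d-maximum-principle}) goes through. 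Alternatively, the paper's primary proof uses the Bernstein/Plateau method, avoiding length variation entirely.
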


Theorems 1, 2, and 3 will readily follow from Theorems 4, 5, and 6 by a continuity argument and the transversality theorem. 

\begin{figure}
\includegraphics[scale=0.37]{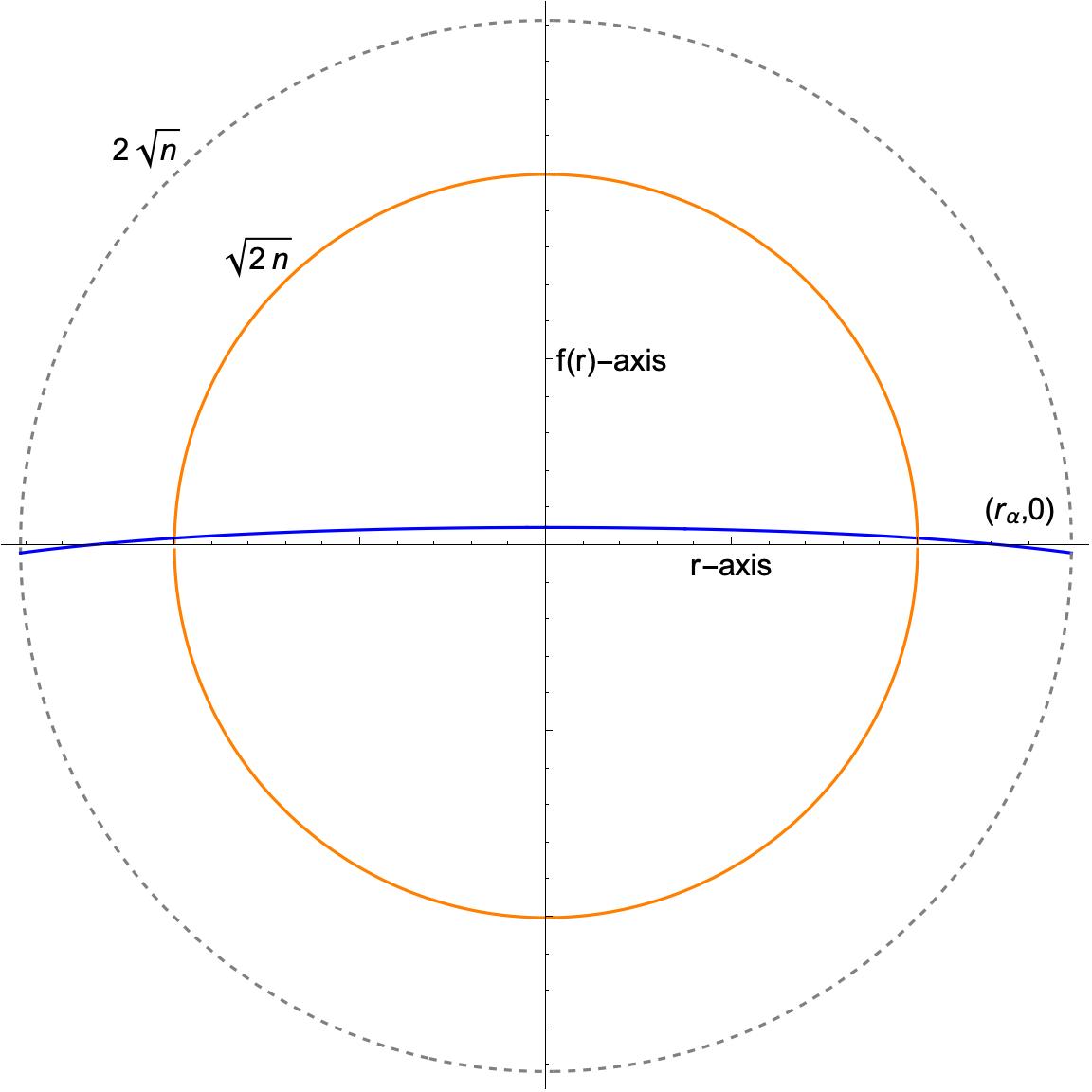}
\caption{Numerical solution of the $n$-dimensional rotationally symmetric shrinker equation $\frac{f''(r)}{(1+f'(r)^2)} - \big(\frac{r}{2} -\frac{n-1}{r}\big)f'(r) + \frac{1}{2}f(r) = 0$, with $f(0)=f_0>0$, $f'(0)=0$. Here $n=2$, and the initial conditions are simulated by taking initial point $(r_0,f_0)= (10^{-6},10^{-1})$ and direction $\theta(10^{-6}) = 0$ (reflecting for $r<0$). Denoting the positive $r$-intercept by $(r_\alpha, 0)$, the portion of this curve inside $B_R$ lies strictly in the upper half plane for $R<r_\alpha$. Moreover, as $f_0 \to 0$ it appears that $r_\alpha\to 2\sqrt{n}$. Circles of radii $\sqrt{2n}$ (filled in orange) and $2\sqrt{n}$ (dashed in grey) included for reference.}\label{fig:shooting-method}
\end{figure}

\begin{remark}\label{rem:shooting-method}
Numerical simulations (shooting method, see Figure \ref{fig:shooting-method}) for the rotationally symmetric shrinker equation suggest that for any $R<2\sqrt{n}$, there is a curve $\gamma$ whose rotation satisfies the shrinker equation, is properly embedded in $B_R^{n+1}$, and is contained in a strict half-space. In particular, its reflection $\bar{\gamma}$ across the rotational axis will also define a properly embedded shrinker in $B_R$, disjoint from the original. This would give a counterexample to a strong Frankel property for properly embedded shrinkers in Euclidean balls of radius $R < 2\sqrt{n}$. We expect that analysis similar to \cite[Section 2.3]{Dr15} may establish this counterexample rigorously.
\end{remark}

\subsection{Paper outline} 
In Section 2, we establish preliminaries on stability in all three settings and establish some basic results needed for space forms. In Section 3, we establish the generalized Bernstein result in the hemisphere and prove Theorems 1 and 4. In Section 4, we do the same in the free boundary setting and prove Theorems 2 and 5. In Section 5, we sketch the adaptations necessary to prove Theorems 3 and 6 via the Bernstein and Plateau proof method (1). In Section 6, we give alternative proofs of our results based on the length variation method (2) and a differential inequality satisfied by the distance function to minimal hypersurfaces.

\subsection*{Acknowledgements}
KN was supported by the National Science Foundation under grant DMS-2103265. JZ was supported in part by a Sloan Research Fellowship. The authors wish to thank Giada Franz for helpful comments on a previous draft.

\section{Preliminaries}
\label{sec:prelims}

Throughout this section, we suppose $(M, g)$ is a complete $(n+1)$-dimensional Riemannian manifold and $N \subset M$ is an open connected subset. We let $S \subset \partial N$ denote a smooth manifold (open in $\partial N$). We allow $S = \emptyset$ and let $D := \partial N \setminus S$ denote the closed complement of $S$ in $\partial N$. We will sometimes call $D$ the \textit{fixed-boundary} and $S$ the \textit{free boundary} in $\partial N$. 

For our main results, $M, N, D, S$ will belong to the following list: 
\begin{enumerate}
\item[(I)] A hemisphere $N =\mathbb{S}^{n+1}_+$ inside the (round) sphere $M = \mathbb{S}^n$ with $D = \partial N$ and $S = \emptyset$. 
\item[(II)] A geodesic half-ball $N = B^{n+1}_{R, +}$ (see Section \ref{sec:test-function}) of radius $R$ in a space form $M \in \{\mathbb{H}^{n+1}, \mathbb{R}^{n+1}, \mathbb{S}^{n+1}\}$ of curvature $\kappa \in \{-1, 0, 1\}$ with $D = \overline{\partial B^{n+1}_{R, +} \setminus \partial B^{n+1}_R}$ the closed totally geodesic portion of the boundary and $S = \partial B^{n+1}_{R, +} \setminus D$ the open complement of $D$.
\item[(III)] $M=(\mathbb{R}^{n+1},g)$ has the Gaussian metric $g = e^{-|x|^2/2n}g_{\mathbb{R}^{n+1}}$, and $N$ is the ball $N = B^{n+1}_{2\sqrt{n}} \subset \mathbb{G}^{n+1}$ of (Euclidean) radius $2\sqrt{n}$, with $D = \partial N$ and $S = \emptyset$. 
\end{enumerate}

We are primarily interested in properly embedded hypersurfaces in $N$ with boundary; when $S$ is nonempty we will impose free boundary conditions along $S$. To make this notion precise we take the following definition:

\begin{definition}
\label{def:half-fb}
Let $M, N, S, D$ be as above (allowing $S = \emptyset$). We say a hypersurface $\Sigma \subset N$ is a properly embedded, $S$ free boundary hypersurface if:
\begin{enumerate}[(i)]
\item $(\Sigma, \pr \Sigma) \hookrightarrow (N,\pr N)$ is a smooth, proper embedding. 
\item $\Sigma$ meets $S$ orthogonally; that is, at any boundary point $x \in \partial \Sigma \cap S$, the (outer) conormal of $\partial \Sigma$ in $\Sigma$ is equal to the (outer) normal of $S$ in $N$.  
\end{enumerate}
\end{definition}

Our goal is to study the intersection properties of properly embedded, $S$ free boundary minimal hypersurfaces in $(N, g)$ as above.

\subsection{Stability}
\label{sec:stability}

Let $M, N, D, S, \Sigma$ be as before. In this section, we adopt the convention that $\Sigma \subset \overline{N}$ is relatively open in $\overline{N}\setminus D$; that is $\Sigma$ includes its boundary in $S = \pr N\setminus D$ or equivalently $\Sigma = \overline{\Sigma} \cap (\overline{N} \setminus D)$. 

A variation $\Sigma_t = \phi_t(\Sigma)$ of $\Sigma=\Sigma_0$ ($\phi_0 = \mathrm{Id}$) (more generally, of a current or varifold) is the image under a 1-parameter family of diffeomorphisms $\phi_t$ of $N$; we denote the generator by $X = \partial_t \phi_t |_{t=0}$. Note that $X$ need not vanish on $\partial N$. 

A properly embedded hypersurface $\Sigma$ in stable in $(N,\pr N)$ if it is minimal and, for any $v\in C^\infty_0(\Sigma)$, 
\begin{equation}
\label{eq:usual-stability}
0\leq -\int_\Sigma v L_\Sigma v , \qquad L_\Sigma:= \Lap_\Sigma + |A_\Sigma|^2 + \Ric_M (\nu,\nu).
\end{equation}
Here $\nu$ is a choice of unit normal along $\Sigma$. The right hand side is precisely the second variation of area under the variation generated by $X = v\nu$.

More generally, if $S$ may be nonempty, let $\eta$ denote its outward-pointing normal in $N$ and $A_S$ denote its second fundamental form (with respect to $\eta$). A properly embedded, $S$ free boundary hypersurface $\Sigma$ is \textit{stable in $(N, D)$} if it is minimal and for any $v\in C^\infty_0(\Sigma)$, 
\begin{equation}
\label{eq:fb-stability}
0\leq -\int_\Sigma v L_\Sigma v + \int_{\pr \Sigma \cap S} v(\pr_\eta - A_S(\nu,\nu))v.
\end{equation}
Note that the $S$ free boundary condition implies that $X= v\nu$ is tangent along $S$.

By integration by parts and standard approximation arguments, if $\Sigma$ is stable in $(N,D)$, then for all $v\in C^{0,1}_0(\Sigma)$ we have the gradient stability inequality
\begin{equation}
\label{eq:fb-stability-gradient}
\int_{\Sigma} (|A_{\Sigma}|^2 + \mathrm{Ric}_M(\nu ,\nu)) v^2 \leq \int_{\Sigma} |\nabla^{\Sigma} v|^2 + \int_{\partial \Sigma \cap S} v(\pr_\eta - A_S(\nu,\nu))v.
\end{equation}
Again when $S = \emptyset$, the second term on the right hand side may be taken to be zero.

Finally, we say that $\Sigma$ is stable \textit{with respect to one-sided variations} if the corresponding stability inequalities hold under the additional assumption $v\geq 0$.

\begin{remark}[Unstable deformations in the full space] 
\label{rmk:instability}
We remark that geodesic balls $B^{n+1}_R \subset M$ in space forms do not have any stable free boundary minimal hypersurfaces (cf. \cite{Z23}). It will be useful to obtain mean convex deformations of these minimal surfaces, which (as in the closed setting) one expects can be obtained by moving in the direction of the first eigenfunction. However, in Section \ref{sec:bernstein-fb-full} we will only be interested in mean convex deformations of totally geodesic free boundary hypersurfaces in $B^{n+1}_R$. We find it somewhat easier to deal with this case by hand using conformal maps (see Appendix \ref{sec:conformal-deformations}).
\end{remark}


\subsection{Space forms, totally geodesic hypersurfaces, and smooth Bernstein results}
\label{sec:test-function}

In this section, we observe that the distance from a totally geodesic hypersurface provides a useful test function for the stability inequality in space forms.

Given a simply-connected space form $M \in \{\mathbb{H}^{n+1}, \mathbb{R}^{n+1}, \mathbb{S}^{n+1}\}$ of constant curvature $\kappa \in \{-1, 0, 1\}$, we define 
\begin{equation}\label{def:sn}
\sn(r) := \begin{cases} \sinh(r),& M= \mathbb{H}^{n+1}\\  r,& M= \mathbb{R}^{n+1}\\ \sin(r),& M= \mathbb{S}^{n+1}\end{cases}.
\end{equation}
and set $\cs(r) := \sn'(r)$, as well as $\tn(r) := \sn(r)/\cs(r)$ and $\ct(r) :=1/\tn(r)$. Given a radius $R > 0$, we let $B^{n+1}_R\subset M$ denote a geodesic ball of radius $R$ with a center $o \in M$. We let $B^{n+1}_{R,+}$ denote a geodesic ``half-ball". That is, if $M' \subset M$ is a totally geodesic complete hypersurface passing through $o$, then $M'$ separates $M$ into two (identical) open subsets $M_+ \cup M_-$ and we take
\begin{equation}
B^{n+1}_{R, +} := B^{n+1}_R \cap M_+. 
\end{equation}

\begin{lemma}\label{lem:space-form-test-function}
Let $M \in \{\mathbb{H}^{n+1}, \mathbb{R}^{n+1}, \mathbb{S}^{n+1}\}$ be a simply-connected space-form which has constant curvature $\kappa \in \{-1, 0, 1\}$ respectively. Let $M' \subset M$ be any complete totally geodesic hypersurface. Let $\rho: M \to [0, \infty)$ be the distance function to $M'$. Then 
\begin{equation}
\nabla^2 \sn(\rho) = -\kappa \sn(\rho) g.
\end{equation}
Moreover, let $o\in M'$ and $r$ be the distance function from $o$. Then 
\begin{equation}
\langle \nabla \sn(\rho), \nabla r\rangle = \ct(r)\sn(\rho). 
\end{equation}
\end{lemma}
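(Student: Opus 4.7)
The plan is to work in normal (Fermi) coordinates along the totally geodesic hypersurface $M'$. Because $M'$ is complete and totally geodesic in a simply-connected space form of constant curvature $\kappa$, the normal exponential map identifies (the non-focal portion of) its tubular neighborhood with the warped product
\[ g = d\rho^2 + \cs(\rho)^2 \, g_{M'}, \]
where $\rho$ is the (signed) distance to $M'$; the warping factor reflects the fact that the equidistant hypersurfaces $\{\rho = c\}$ are totally umbilic with principal curvature $\cs'(\rho)/\cs(\rho) = -\kappa\, \tn(\rho)$.

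For the first identity, I would apply the standard Hessian formula for a function of $\rho$ alone in such a warped product,
\[ \nabla^2 f(\rho) = f''(\rho)\, d\rho \otimes d\rho + f'(\rho)\, \frac{\cs'(\rho)}{\cs(\rho)} \bigl(g - d\rho \otimes d\rho\bigr). \]
Substituting $f = \sn$ and using the elementary identities $\sn' = \cs$ and $\cs' = -\kappa \sn$, both coefficients collapse to $-\kappa \sn(\rho)$, and the two pieces assemble into $\nabla^2 \sn(\rho) = -\kappa \sn(\rho)\, g$.

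For the second identity, let $q \in M'$ be the foot of the perpendicular from $p$ to $M'$, and set $s = d_{M'}(o, q)$. Since $o \in M'$, the triangle $oqp$ is a right geodesic triangle with right angle at $q$, legs $s$ and $\rho$, and hypotenuse $r = d_M(o, p)$. The space-form right-angled law of cosines then gives $\cs(r) = \cs(\rho)\cs(s)$. Differentiating and using $\cs' = -\kappa \sn$ produces $\sn(r)\, \nabla r = \sn(\rho)\cs(s)\, \nabla \rho + \cs(\rho)\sn(s)\, \nabla s$. Since $|\nabla \rho| = 1$ and $\nabla \rho \perp \nabla s$ in the warped product, pairing with $\nabla \sn(\rho) = \cs(\rho) \nabla \rho$ yields
\[ \langle \nabla \sn(\rho), \nabla r \rangle = \frac{\sn(\rho)\cs(\rho)\cs(s)}{\sn(r)} = \frac{\sn(\rho)\cs(r)}{\sn(r)} = \sn(\rho)\, \ct(r). \]

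The main piece of bookkeeping is uniform treatment of $\kappa \in \{-1, 0, 1\}$: the Euclidean case is slightly degenerate, since $\cs \equiv 1$ renders the law of cosines vacuous, so I would dispatch it by the direct Pythagorean computation $\langle \nabla \rho, \nabla r \rangle = \rho/r$, which matches $\ct(r)\sn(\rho) = \rho/r$. An alternative unified presentation would embed $M$ isometrically as a totally umbilic hypersurface of an $(n+2)$-dimensional ambient space (Euclidean or Minkowski); then $\sn(\rho)$ and $\cs(r)$ become restrictions of ambient linear functionals $Q(p,\nu)$ and $\pm Q(p,o)$, and both identities follow from the Gauss equations together with the orthogonality $Q(\nu, o) = 0$ (since $o \in M' \subset \nu^\perp$).
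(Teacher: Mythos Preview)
Your proof is correct and follows essentially the same route as the paper: both use the warped product form $g = d\rho^2 + \cs(\rho)^2 g_{M'}$ for the Hessian identity, and both invoke the right-angled law of cosines on the triangle $o\,q\,p$ (the paper's $o\,z\,x$) for the gradient identity, treating the Euclidean case separately. The only cosmetic difference is that the paper extracts $\langle \nabla\rho,\nabla r\rangle$ by applying the law of cosines at the vertex $p$ directly, whereas you differentiate the Pythagorean relation $\cs(r)=\cs(\rho)\cs(s)$ and pair with $\nabla\rho$; these are equivalent manipulations of the same triangle.
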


\begin{proof}
It is straightforward to verify that the metric on a space form can be written inductively as $g = d\rho^2 + \cs(\rho)^2 g'$ on $M \setminus M'$. In particular, since $\nabla \rho = \partial_\rho$, we have 
\[
\nabla^2 \rho = \frac{1}{2} \mathcal{L}_{\partial \rho} g = -\kappa \cs(\rho) \sn(\rho) g' =  -\kappa \tn(\rho) (g - d\rho\otimes d\rho).
\]
The formula for $\nabla^2 \sn(\rho)$ readily follows. 

Consider $x\in M$, let $z\in M'$ be the nearest point projection and $r_z$ the distance function from $z$. Then at $x$ we have $\nabla \rho  = \nabla r_z$, and the law of cosines on the right triangle $ozx$ gives (for $\kappa \neq 0$)
\[ \langle \nabla \rho, \nabla r\rangle = \langle \nabla r_z,\nabla r\rangle= \kappa \frac{\frac{\cs r}{\cs \rho} - \cs r \cs \rho}{\sn r \sn \rho} = \frac{\ct r}{\sn \rho \cs \rho} \kappa (1- \cs^2 \rho) = \ct r \tn \rho .\]
When $\kappa=0$, the law of sines immediately gives $\langle \nabla \rho, \nabla r\rangle =  \langle \nabla r_z,\nabla r\rangle= \frac{\rho}{r} = \ct r \tn \rho$. In either case, this implies the result. 
\end{proof}

In particular, on a minimal hypersurface $\Sigma\subset M$, it follows from the lemma that 
\begin{equation}\label{eq:space-form-test-function}
(\Lap_\Sigma + n\kappa) \sn \rho = 0 \;\; \text{ and } \;\;(\pr_r - \ct(r)) \sn \rho =0.
\end{equation}
Note that a space form has  $\Ric_M(\nu, \nu) = n\kappa$, and any geodesic sphere $S=\pr B^{n+1}_R \subset M$ has second fundamental form $A_S(\nu, \nu) = \ct(R)$.
Thus, 
\begin{equation} 
L_\Sigma \sn \rho = |A_\Sigma|^2 \sn \rho \;\; \text{ and } \;\; (\partial_\eta - A_S(\nu, \nu)) \sn \rho\big|_{\partial \Sigma \cap S} = 0.
\end{equation} 
Thus if $\Sigma$ is stable in settings (I) or (II), the stability inequality \eqref{eq:fb-stability} yields $\int_\Sigma |A_\Sigma|^2 \sn^2\rho \leq 0$, which forces $\Sigma$ to be totally geodesic. As $\sn \rho$ vanishes on the totally geodesic hypersurface $M'$, we have the following classifications of stable minimal hypersurfaces to one side of $M'$.

\begin{proposition}\label{prop:smooth-bernstein}
Suppose that $\Sigma^n \subset \mathbb{S}^{n+1}_+$ is a properly embedded minimal hypersurface of a hemisphere. Then $\Sigma$ is stable if and only if $\Sigma$ is totally geodesic. 
\end{proposition}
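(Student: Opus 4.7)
The plan is to test the stability inequality with $v := \sin \rho$, where $\rho$ denotes the distance in $\mathbb{S}^{n+1}$ to the equator $M' := \partial \mathbb{S}^{n+1}_+$. The essential observation driving the argument is that $M'$ is itself a totally geodesic hypersurface (a great $n$-sphere), so Lemma \ref{lem:space-form-test-function} applies with $\kappa = 1$. The resulting function $\sin\rho$ extends smoothly to all of $\mathbb{S}^{n+1}$, is strictly positive on the open hemisphere, and vanishes identically on $\partial\mathbb{S}^{n+1}_+$.

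For the direction $(\Rightarrow)$, suppose $\Sigma$ is stable. Since $\overline{\mathbb{S}^{n+1}_+}$ is compact, so is $\overline{\Sigma}$. The computation preceding the proposition (with $S=\emptyset$, $\kappa=1$) gives
\begin{equation*}
L_\Sigma(\sin\rho) = |A_\Sigma|^2 \sin\rho
\end{equation*}
on the minimal hypersurface $\Sigma$, while $\sin\rho$ vanishes on $\partial \Sigma \subset M'$. I would then approximate $\sin\rho$ in $W^{1,2}(\Sigma)$ by functions in $C^\infty_0(\Sigma)$ via a standard Lipschitz cutoff near $\partial\Sigma$ (noting that $|\nabla \sin\rho|$ is globally bounded) and pass to the limit in \eqref{eq:usual-stability}. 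This yields
\begin{equation*}
0 \leq -\int_\Sigma \sin\rho \cdot L_\Sigma \sin\rho = -\int_\Sigma |A_\Sigma|^2 \sin^2\rho,
\end{equation*}
and since $\sin\rho > 0$ on the interior of $\Sigma$, we must have $|A_\Sigma| \equiv 0$ there; by smoothness, $\Sigma$ is totally geodesic.

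Conversely, if $\Sigma$ is totally geodesic then every connected component is a hemisphere of a great $n$-sphere of $\mathbb{S}^{n+1}$, on which the Jacobi operator reduces to $L_\Sigma = \Delta_\Sigma + n$. Such a component is isometric to the standard $\mathbb{S}^n_+$, whose first Dirichlet eigenvalue of $-\Delta$ is exactly $n$ (realized by $\sin \rho$ restricted to the component). Consequently $\int|\nabla v|^2 \geq n\int v^2$ for every $v\in C^\infty_0(\Sigma)$, i.e.\ $\Sigma$ is stable. The only real subtlety in the entire argument is the brief Lipschitz approximation in the forward direction; this can be bypassed entirely by testing directly in \eqref{eq:fb-stability-gradient}, which already admits $v=\sin\rho$ as an admissible Lipschitz function vanishing on $\partial\Sigma$.
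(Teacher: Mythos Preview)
Your proof is correct and is essentially the paper's own argument: both test the stability inequality with $v=\sin\rho$ (using Lemma~\ref{lem:space-form-test-function} to get $L_\Sigma\sin\rho=|A_\Sigma|^2\sin\rho$) and conclude $A_\Sigma\equiv 0$ from $\int_\Sigma|A_\Sigma|^2\sin^2\rho\leq 0$. Your explicit treatment of the converse direction and the remark about the cutoff near $\partial\Sigma$ are minor elaborations on what the paper leaves implicit.
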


\begin{proposition}\label{prop:fb-smooth-bernstein}
Let $M \in \{\mathbb{H}^{n+1}, \mathbb{R}^{n+1}, \mathbb{S}^{n+1}\}$ be a simply-connected space-form. Suppose that $\Sigma^n \subset B^{n+1}_{R, +}$ is a properly embedded minimal hypersurface of a geodesic half-ball in $M$ of radius $R \in (0, \frac{1}{2}\mathrm{diam}(M))$ which meets $S = \partial B^{n+1}_{R, +} \cap M_+$ orthogonally. Then $\Sigma$ is stable if and only if $\Sigma$ is totally geodesic. 
\end{proposition}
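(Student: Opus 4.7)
The plan is to adapt the argument given in the paragraph just before Proposition \ref{prop:smooth-bernstein} to the free-boundary setting, taking the distance to the totally geodesic hyperplane $M'$ containing $D$ as the key geometric quantity. Let $\rho$ denote this distance and set $u := \sn\rho$. The crucial observation, combining \eqref{eq:space-form-test-function} with the identity $A_S(\nu,\nu) = \ct(R)$ on the geodesic sphere $\partial B^{n+1}_R$ and the free boundary orthogonality, is that on any minimal $\Sigma$ as in the statement,
\begin{equation*}
L_\Sigma u = |A_\Sigma|^2 u \quad \text{on } \Sigma, \qquad (\partial_\eta - A_S(\nu,\nu)) u = 0 \quad \text{on } \partial\Sigma \cap S.
\end{equation*}
So $u$ plays the role of a nonnegative Jacobi field for the mixed boundary problem, and vanishes precisely on $\Sigma \cap M' \subset D$.

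For the main direction (stable $\Rightarrow$ totally geodesic), I would plug $v = u$ into the free-boundary stability inequality \eqref{eq:fb-stability}. Because $\Sigma$ is properly embedded in the open half-ball, it cannot coincide with $M'$, so $u$ is positive almost everywhere on $\Sigma$. Since $u$ need not vanish on the free boundary and we must suppress contributions near the fixed boundary, I would first multiply by a standard logarithmic cutoff $\chi_\epsilon$ supported away from $\partial\Sigma \cap D$ and pass to the limit $\epsilon \to 0$. The boundary term on $S$ drops by the second identity above, while the bulk contribution collapses to
\begin{equation*}
0 \leq -\int_\Sigma |A_\Sigma|^2 \sn^2\rho,
\end{equation*}
forcing $|A_\Sigma| \equiv 0$ and hence $\Sigma$ totally geodesic.

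The converse (totally geodesic $\Rightarrow$ stable) should follow from a standard Allegretto--Piepenbrink substitution: $u|_\Sigma$ is then a positive Jacobi field on $\Sigma\setminus D$ satisfying the correct Robin condition on $\partial\Sigma \cap S$, and writing $v = u\phi$ for any admissible $\phi$ and integrating by parts reduces the stability quadratic form to $\int_\Sigma u^2 |\nabla^\Sigma \phi|^2 \geq 0$.

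The main obstacle is the cutoff step near the fixed boundary $D$, since $u$ is only an admissible test function after one makes sense of its vanishing at $\partial\Sigma \cap D$. This should be routine because $\sn\rho$ vanishes to first order on $M'$ while $\Sigma$ is smooth up to $\partial\Sigma$; a logarithmic cutoff around $\partial\Sigma \cap D$ makes the gradient error integrable and vanishing as $\epsilon\to 0$. A secondary point worth checking is that the assumption $R < \frac{1}{2}\diam(M)$, while not strictly needed for the identities above to hold, guarantees $\sn(R)$ and all geometric quantities remain positive and avoids degeneracies of the geodesic ball; the argument itself is insensitive to the sign of $\ct(R)$ because the two terms in $(\partial_\eta - A_S(\nu,\nu))u$ cancel identically.
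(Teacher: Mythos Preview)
Your approach is correct and essentially identical to the paper's: plug $u=\sn\rho$ into the free-boundary stability inequality \eqref{eq:fb-stability}, use the identities $L_\Sigma u = |A_\Sigma|^2 u$ and $(\partial_\eta - A_S(\nu,\nu))u=0$ from \eqref{eq:space-form-test-function}, and conclude $\int_\Sigma |A_\Sigma|^2 \sn^2\rho \leq 0$. The paper is terser in the smooth case (the cutoff near $D$ is only spelled out in the singular version, Proposition~\ref{prop:fb-current-bernstein}); note that since $\sn\rho$ vanishes to first order on $D$, an ordinary linear cutoff $\psi_\delta$ with $|\nabla\psi_\delta|\leq C\delta^{-1}$ already gives $\sup u^2|\nabla\psi_\delta|^2 \leq C$ and hence vanishing error---the logarithmic cutoff you mention works but is unnecessary.
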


To study the intersection properties, we will need the versions of these results which also hold for \textit{singular} minimal hypersurfaces - the reader may consult Sections \ref{sec:bernstein-sphere-formal} and \ref{sec:bernstein-fb-formal} for the formal, generalised proofs. 

We remark that Proposition \ref{prop:smooth-bernstein} was also previously observed by Choe-Soret \cite{CS09}.

\subsection{Currents and varifolds}

In this article, our proofs of the Frankel properties will rely on solving certain variational problems. As such, we briefly review some basic facts we will need from geometric measure theory. We refer the reader to \cite{Si83} for additional details and background. 

In what follows, an $n$-current will always mean an integer multiplicity, rectifiable $n$-current. We let $T \in \mathbb{I}_n(N, \partial N)$ denote an $n$-current in $M$ such that its support lies in $\overline{N}$ and its boundary $\partial T$ is an $(n-1)$-current supported in $\partial N$. We let $\mathcal{H}^n$ denote the $n$-dimensional Hausdorff measure. Given $T$, we let $\theta$ denote its integer-valued multiplicity function, let $\mu_T = \mathcal{H}^n \lfloor\, \theta$ denote its mass measure, and $\spt T := \spt \mu_T$ denote its support. 

The support of the current $T$ can be decomposed into the set of regular points and the set of singular points. Here $\reg T \subset \spt T$ consists of any point $x \in \spt T$ such that either: 
\begin{itemize}
\item (interior regular point) $x \in N \cap \spt T$ there exists an $n$-dimensional, oriented $C^1$ manifold $\Sigma \subset N$ and an integer $m$ such that $T = m [[\Sigma]]$ in a neighborhood of $x$, or else
\item (boundary regular point) $x \in S \cap \spt T$ and there exists an $n$-dimensional, oriented $C^1$ manifold $\Sigma$ with $C^1$ boundary $\partial \Sigma \subset S$ meeting $S$ orthogonally and an integer $m$ such that $T = m[[\Sigma]]$ in a neighborhood of $x$ (in $\overline{N}$). 
\end{itemize}
Of course, when $S$ is empty, we will only have interior regular points. Further, let 
\[
\sing T := \spt T \setminus (\reg T \cup (\spt \partial T \cap D))
\]
 denote the \textit{relative} set of singular points.

Given an $n$-current $T$, we let $V = |T|$ denote the associated $n$-rectifiable integer multiplicity varifold. For $\mu_T = \mu_V$ a.e. $x \in \spt T$, we let $T_xV$ denote the approximate tangent space $V$ at $x$. The first variation of an $n$-varifold $V$ is given by 
\[
\delta V(X) := \int \mathrm{div}_{V} X \, d\mu_V 
\]
where, for any $x \in \spt V$ for which $T_xV$ exists, we have
\[
\mathrm{div}_V X(x) := \mathrm{div}_{T_xV}X (x) = \sum_{i =1}^n \langle \nabla_{e_i} X, e_i \rangle (x).
\]
Here $\nabla$ denotes the connection on $M$ and $e_1, \dots, e_n$ is any orthonormal basis for $T_xV$. 

For $D \subset \partial N$ a closed subset and $S = \partial N \setminus D$ smooth, we will say a varifold $V$  is \textit{stationary relative to $D$} or stationary in $(N, D)$ if $\delta V(X)=0$ for all $C^1$ vector fields $X$ which have compact support in $\overline{N} \setminus D$ and are tangent along $S$. A current $T$ is stationary if the associated varifold $V = |T|$ is.

\subsection{A cutoff lemma}

In order to justify the stability inequality for our test functions on singular minimal hypersurfaces, we will need a cutoff lemma. Its proof which is mostly standard, can be found in Appendix \ref{sec:cutoff}.

\begin{lemma}\label{lem:cutoff}
With $M, N, S, D$ as before, for $\delta \geq 0$, let $N_\delta = \{x \in N : d(x, D) \geq \delta \}$. Suppose $T \in \mathbb{I}_{n}(N, \partial N)$ is an $n$-current which is stationary in $(N, D)$. Assume the regular set $\Sigma = \reg T$ has finite area $\mathcal{H}^n(\Sigma) < \infty$ and assume the (relative) singular set $\sing T$ satisfies the Hausdorff dimension estimate $\mathrm{dim}_{\mathcal{H}} \sing T < n - 2$. 

Then, for any $\delta > 0$ and $\varepsilon > 0$, there exists a Lipschitz function $\phi : N \to [0, 1]$ which vanishes in a neighborhood of $\sing T \cap N_\delta$ and satisfies 
\[
\int_{\Sigma \cap N_\delta} |\nabla^{\Sigma} \phi|^2   \leq \varepsilon.
\]
\end{lemma}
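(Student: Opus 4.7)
The strategy is standard: control the singular set by a Vitali-type cover, assemble radial Lipschitz cutoffs around the balls of that cover, and estimate the Dirichlet energy via the monotonicity formula together with the Hausdorff dimension hypothesis.

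Concretely, I would fix an exponent $s$ with $\dim_{\mathcal{H}} \sing T < s < n-2$, so that $\mathcal{H}^s(K) = 0$ for $K := \sing T \cap N_\delta$. Because $K$ lies at distance at least $\delta$ from the fixed boundary $D$, and $N$ is bounded in the settings of interest, $K$ is relatively compact in $\overline{N} \setminus D$. The monotonicity formula for integer rectifiable stationary varifolds (interior monotonicity away from $S$, together with a free-boundary version along $S$ valid because the regular part of $\Sigma$ meets $S$ orthogonally) produces a uniform density bound
\[
\mu_T(B_r(x)) \leq C\, r^n \quad \text{for all } x \in \spt T \cap N_{\delta/2},\; 0 < r \leq r_0,
\]
with $C, r_0$ depending on $\delta$, on the ambient geometry of $N_{\delta/2}$, and on the total mass $\mathcal{H}^n(\Sigma)$.

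Given $\varepsilon > 0$, I would cover $K$ by balls $\{B_{r_i}(x_i)\}$ with $r_i \leq r_0$ and $\sum_i r_i^s$ arbitrarily small; a Vitali-type refinement combined with the bounded geometry on $N_{\delta/2}$ arranges that the enlarged balls $\{B_{2r_i}(x_i)\}$ have bounded multiplicity $N_0$. For each $i$, let $\phi_i(x) := \min\{1, \max\{0, (d(x,x_i) - r_i)/r_i\}\}$, a Lipschitz function that vanishes on $B_{r_i}(x_i)$, equals $1$ outside $B_{2r_i}(x_i)$, and satisfies $|\nabla \phi_i| \leq 1/r_i$. Set $\phi := \prod_i \phi_i$, so $\phi = 0$ on the open neighborhood $\bigcup_i B_{r_i}(x_i) \supset K$ and $\phi \equiv 1$ outside $\bigcup_i B_{2r_i}(x_i)$. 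Since $\phi_j \leq 1$, the Leibniz rule gives $|\nabla \phi| \leq \sum_i |\nabla \phi_i|$, and by bounded multiplicity $|\nabla \phi|^2 \leq N_0 \sum_i |\nabla \phi_i|^2$ pointwise. Integrating against $\mu_T$ and applying the density bound,
\[
\int_{\Sigma \cap N_\delta} |\nabla^{\Sigma} \phi|^2 \leq N_0 \sum_i r_i^{-2}\, \mu_T(B_{2r_i}(x_i)) \leq N_0 C'\sum_i r_i^{n-2} \leq N_0 C' r_0^{\,n-2-s} \sum_i r_i^s,
\]
which can be made smaller than $\varepsilon$ by choosing the cover sufficiently fine.

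The principal technical issue I expect is the uniform density bound at singular points lying on the free boundary $S$. In settings (I) and (III) of the paper one has $S = \emptyset$, so Allard's interior monotonicity is enough. In setting (II) one needs either a free-boundary monotonicity formula for integer rectifiable stationary varifolds meeting $S$ orthogonally on their regular locus, or a reflection argument; because $S$ is umbilic in the space-form geometry, a conformal straightening followed by reflection reduces this case to the interior one.
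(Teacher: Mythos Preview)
Your strategy --- cover the compact set $\sing T\cap N_\delta$ by small balls, assemble radial Lipschitz cutoffs, and estimate via the monotonicity formula --- is exactly the paper's, and your identification of free-boundary monotonicity as the only nontrivial ingredient is correct (the paper handles it by citing a free-boundary monotonicity result rather than by reflection).

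There is, however, a technical slip: a Vitali refinement does \emph{not} force the doubled balls to have bounded overlap when the radii range over many scales. For instance, in $\mathbb{R}$ the balls $B_{r_i}(x_i)$ with $x_i=4^{-i}$ and $r_i=\tfrac{1}{2}\cdot 4^{-i}$ are pairwise disjoint, yet $B_{2r_i}(x_i)=(0,\,2\cdot 4^{-i})$, so the multiplicity at a point $x>0$ is roughly $\log_4(2/x)$ --- finite for each $x$, but with no uniform bound. Your product $\phi=\prod_i\phi_i$ then lacks justification for the step $|\nabla\phi|^2\leq N_0\sum_i|\nabla\phi_i|^2$, and without it the estimate degrades to $\bigl(\sum_i r_i^{(n-2)/2}\bigr)^2$, which the hypothesis does not control.

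The paper sidesteps this cleanly by taking $\phi=\min_i\phi_i$ instead of the product: at a.e.\ point the gradient of a minimum equals the gradient of one of the $\phi_i$, so $|\nabla\phi|^2\leq\max_i|\nabla\phi_i|^2\leq\sum_i|\nabla\phi_i|^2$ with no overlap hypothesis whatsoever, and one integrates directly to obtain $\int_{\Sigma\cap N_\delta}|\nabla^\Sigma\phi|^2\leq C\sum_i r_i^{n-2}$. The paper also works directly at the exponent $n-2$ (since $\dim_{\mathcal{H}}\sing T<n-2$ already gives $\mathcal{H}^{n-2}(\sing T)=0$, hence covers with $\sum_i r_i^{n-2}$ arbitrarily small), making your intermediate exponent $s$ unnecessary.
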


We remark that, for minimising currents, well-known regularity theory (\cite{Fed70},\cite{GR87}) ensures that the singular set has Hausdorff dimension at most $n -7$. 

\section{Bernstein theorems and Frankel properties in the (hemi-)sphere}

In this section, we give a proof of the improved Frankel property for embedded minimal hypersurfaces in the (hemi-)sphere. 

\subsection{Bernstein result for stable currents in a hemisphere}
\label{sec:bernstein-sphere-formal}

We now prove the analogue of Proposition \ref{prop:smooth-bernstein} for stable currents. 

\begin{proposition}\label{prop:current-bernstein}
Suppose that $T \in \mathbb{I}_n(\mathbb{S}^{n+1}_+, \partial \mathbb{S}^{n+1}_+)$ is an integral $n$-current in the hemisphere which is stationary in $(\mathbb{S}^{n+1}_+, \partial \mathbb{S}^{n+1}_+)$. 
Assume the regular set $\Sigma = \reg T$ has finite area $\mathcal{H}^n(\Sigma) < \infty$ and assume the singular set $\sing T = \spt T \setminus (\Sigma \cup \spt \partial T)$ of $T$ satisfies the Hausdorff dimension estimate $\mathrm{dim}_{\mathcal{H}} \sing T < n - 2$. If $\Sigma$ is stable in $(\mathbb{S}^{n+1}_+, \pr \mathbb{S}^{n+1}_+)$ with respect to one-sided variations, then its second fundamental form $A_{\Sigma}$ vanishes everywhere.
\end{proposition}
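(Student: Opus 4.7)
The plan is to run the test-function argument from the smooth Proposition \ref{prop:smooth-bernstein} on the regular part $\Sigma = \reg T$, using the nonnegativity of $u := \sn \rho \geq 0$ (where $\rho$ is distance to the equator $\pr \mathbb{S}^{n+1}_+$) and the one-sided stability of $\Sigma$. Recall from \eqref{eq:space-form-test-function} that $\Lap_\Sigma u = -n u$ on any minimal hypersurface in $\mathbb{S}^{n+1}$, hence $L_\Sigma u = |A_\Sigma|^2 u$. If $u$ were already compactly supported on $\Sigma$, the one-sided stability inequality $0\leq -\int_\Sigma u L_\Sigma u$ would instantly give $\int_\Sigma |A_\Sigma|^2 u^2\leq 0$, and hence $A_\Sigma\equiv 0$ since $u>0$ throughout the open hemisphere. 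The work is therefore to produce an admissible Lipschitz, compactly supported substitute for $u$ on $\Sigma$.

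To this end, I would introduce two cutoffs: a smooth boundary cutoff $\psi_\delta:\mathbb{S}^{n+1}_+\to[0,1]$ with $\psi_\delta\equiv 0$ on $\{\rho\leq\delta\}$, $\psi_\delta\equiv 1$ on $\{\rho\geq 2\delta\}$, and $|\nabla \psi_\delta|\leq C/\delta$; and, for each $\varepsilon>0$, the Lipschitz cutoff $\phi_\varepsilon:\mathbb{S}^{n+1}_+\to[0,1]$ produced by Lemma \ref{lem:cutoff} on $N_\delta := \{\rho\geq \delta\}$, which vanishes in a neighbourhood of $\sing T\cap N_\delta$ and satisfies $\int_{\Sigma\cap N_\delta}|\nabla^\Sigma \phi_\varepsilon|^2 \leq \varepsilon$. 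The function $v:= \phi_\varepsilon\psi_\delta u$ is nonnegative and lies in $C^{0,1}_0(\Sigma)$, so is admissible in the one-sided version of \eqref{eq:fb-stability-gradient}.

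The next step is a short computation: expanding $|\nabla v|^2$ and integrating by parts against $\Lap_\Sigma u=-nu$ (legal since $v$ has compact support in the smooth submanifold $\Sigma$) gives
\begin{equation*}
\int_\Sigma |\nabla v|^2 \;=\; n\int_\Sigma \phi_\varepsilon^2\psi_\delta^2\, u^2 \;+\; \int_\Sigma u^2 \,|\nabla(\phi_\varepsilon\psi_\delta)|^2,
\end{equation*}
so that the gradient stability inequality (using $\Ric_{\mathbb{S}^{n+1}}=n$) collapses, after cancelling the common term $n\int \phi^2\psi^2 u^2$, to
\begin{equation*}
\int_\Sigma |A_\Sigma|^2 \,(\phi_\varepsilon\psi_\delta u)^2 \;\leq\; \int_\Sigma u^2\,|\nabla(\phi_\varepsilon\psi_\delta)|^2 \;\leq\; 2\int_{\Sigma}|\nabla\phi_\varepsilon|^2 \;+\; 2\int_\Sigma u^2 |\nabla \psi_\delta|^2,
\end{equation*}
where we used $u\leq 1$. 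The first error term is at most $2\varepsilon$ by Lemma \ref{lem:cutoff}. The second is supported on $\{\delta\leq\rho\leq 2\delta\}$, where $u=\sn \rho \leq C\delta$ and $|\nabla\psi_\delta|\leq C/\delta$, and is therefore bounded by $C\,\mathcal{H}^n(\Sigma\cap\{\rho\leq 2\delta\})$, which vanishes as $\delta\to 0$ since $\mathcal{H}^n(\Sigma)<\infty$ and $\{\rho=0\}\cap\Sigma = \emptyset$.

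Sending $\varepsilon\to 0$ and then $\delta\to 0$ by monotone convergence yields $\int_\Sigma |A_\Sigma|^2 u^2 \leq 0$, and since $u>0$ on $\Sigma\subset \mathbb{S}^{n+1}_+$, this forces $A_\Sigma\equiv 0$. The main delicacy is the simultaneous cutoff step: the hypothesis $\dim_{\mathcal{H}}\sing T<n-2$ is exactly what makes the singular cutoff of Lemma \ref{lem:cutoff} available with arbitrarily small Dirichlet energy, and the linear vanishing $u = \sn \rho = O(\rho)$ along $D$ is what makes the boundary cutoff error $\int u^2|\nabla\psi_\delta|^2$ controllable (so that a standard linear cutoff suffices, with no need for a logarithmic refinement).
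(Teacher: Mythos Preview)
The proposal is correct and follows essentially the same approach as the paper: both use $u=\sn\rho$ together with boundary and singular-set cutoffs $\psi_\delta$, $\phi_\varepsilon$ to derive the key inequality $\int_\Sigma |A_\Sigma|^2 (\phi\psi)^2 u^2 \leq \int_\Sigma u^2 |\nabla(\phi\psi)|^2$, then estimate the two error terms exactly as you do. The only cosmetic difference is that the paper plugs $uv$ (with $v=\phi\psi$) into the stability inequality $0\leq -\int uv\,L_\Sigma(uv)$ and expands, whereas you plug $v=\phi\psi u$ into the gradient form and integrate by parts against $\Lap_\Sigma u=-nu$; these are equivalent manipulations leading to the same inequality.
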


\begin{proof}
For ease of notation let $N = \mathbb{S}^{n+1}_+$ and let $\rho$ be the distance to the totally geodesic boundary $\pr N$, so that $N = \{\rho >0\}$. Let $N_\delta = \{x \in N : d_N(x, \partial N) \geq \delta\}$ as before.  Let $\Sigma = \reg T$ and let $u := \sn \rho |_{\Sigma}> 0$. 

We claim that, for any nonnegative Lipschitz function $v$ with compact support in $\Sigma$, we will have 
\begin{equation}
\label{eq:current-bernstein}
\int_\Sigma v^2 |A_\Sigma|^2 u^2 \leq  \int_\Sigma u^2 |\nabla^{\Sigma} v|^2. 
\end{equation}
To prove the claim, note that by approximation and dominated convergence, it suffices to consider smooth $v$. By the classical computation on the regular part $\Sigma$ we have $L_\Sigma u = |A_\Sigma|^2 u$ (see Section \ref{sec:test-function}). Then $uv\geq 0$ is a valid test function for the stability inequality \eqref{eq:usual-stability}, which gives
\begin{align*}
0&\leq -\int_\Sigma uv L_\Sigma(uv)   \\
& = -\int_\Sigma \left( v^2 uL_\Sigma u +  2uv \langle \nabla u, \nabla v\rangle + u^2 \phi \Delta_{\Sigma} v \right) \\
& =  -\int_\Sigma \left( v^2 |A_\Sigma|^2 u^2 + \mathrm{div}_{\Sigma}( u^2 v \nabla^\Sigma v) - u^2 |\nabla^\Sigma v|^2  \right) \\
& =  -\int_\Sigma \left( v^2 |A_\Sigma|^2 u^2 -  u^2 |\nabla^{\Sigma} v|^2 \right).
\end{align*}
Moving the first term to the left gives \eqref{eq:current-bernstein}, and establishes the claim.

Consider $\delta, \varepsilon > 0$ small. Let $\psi = \psi_\delta : N \to [0,1]$ denote a smooth cutoff function on $N$ such that $\psi \equiv 1$ in $N_{2\delta}$, $\psi \equiv 0$ on $B_\delta(\partial N)$, and $|\nabla \psi| \leq C\delta^{-1}$.  Let $\phi = \phi_{\varepsilon}^\delta$ denote the cutoff function given by Lemma \ref{lem:cutoff}. Then $v := \psi \phi \in C^{0, 1}_0(\Sigma)$. Indeed, it has compact support on $\Sigma$ since $\psi$ vanishes outside $N_\delta \cap \Sigma$ and $\phi$ vanishes in a neighborhood of $\sing T \cap N_\delta$.

In particular, \eqref{eq:current-bernstein} holds for $v= \psi \phi$. 
Noting that
\[
 |\nabla^{\Sigma} v|^2 \leq 2\psi^2 |\nabla^{\Sigma} \phi|^2 + 2 \phi^2|\nabla^{\Sigma} \psi|^2,
\]
we obtain 
\[
\int_{\Sigma} |A_{\Sigma}|^2 u^2 \psi^2 \phi^2   \leq 2 \int_{\Sigma} u^2\psi^2 |\nabla^{\Sigma} \phi|^2 +2 \int_{\Sigma} u^2 \phi^2 |\nabla^{\Sigma} \psi|^2.
\]
Since $u^2 \psi^2 \leq 1$, the gradient estimate in Lemma \ref{lem:cutoff} gives 
\[
\int_{\Sigma} |A_{\Sigma}|^2 u^2 \psi^2 \phi^2   \leq 2 \varepsilon + 2 \int_{\Sigma} u^2\phi^2 |\nabla^{\Sigma}\psi|^2.
\]
On the other hand, for the remaining integral on the right hand side, we have $\phi^2 \leq 1$ and the function $|\nabla^{\Sigma} \psi|^2$ is only supported on $\Omega_\delta := N_{2\delta} \setminus B_{\delta}(\partial N)$.  In this region, we have $|\nabla^{\Sigma} \psi| \leq C \delta^{-1}$. But since $u$ satisfies $u |_{\partial N} = 0$, we have $\sup_{\Omega_\delta} u \leq C \delta$ for some $C$ independent of $\delta$. Consequently, 
\[
\sup_{\Omega_\delta \cap \Sigma}u^2 \phi^2 |\nabla^{\Sigma} \psi|^2 \leq C.
\]
We conclude  
\[
\int_{\Sigma} |A_{\Sigma}|^2 u^2 \psi^2 \phi^2 \leq 2 \varepsilon + C \mathcal{H}^{n}(\Sigma \cap \Omega_\delta). 
\]
We first send $\varepsilon \to 0$ and then $\delta \to 0$. By Fatou's lemma, we obtain  
\[
\int_{\Sigma } |A_{\Sigma}|^2 u^2  \leq 0. 
\]
Since $u > 0$ on $\Sigma$, this implies $A_{\Sigma} \equiv  0$, as claimed. 

\end{proof}

\subsection{Proof of Theorem \ref{thm:hemisphere-more-general}}

In this section, we establish the improved Frankel property stated in Theorem \ref{thm:hemisphere-more-general}. The guiding principle is that, by the Bernstein result in the previous section, the hemisphere only supports a limited class of stable minimal hypersurfaces. This may be quantified by their boundaries, so we will exploit the Bernstein result by constructing a suitable boundary for the Plateau problem using the original minimal hypersurfaces as barriers. 

For $i = 0,1$, consider $\Sigma_i \subset N:= \mathbb{S}^{n+1}_+$ which are smooth proper embedded minimal hypersurfaces in the hemisphere, with smooth embedded boundary $\Gamma_i \subset \partial N$. We may assume without loss of generality that $\Sigma_i$ is connected. Note that, by the usual Frankel property in $\mathbb{S}^{n+1}$ (since $\partial N$ is minimal), each $\Gamma_i$ must be nonempty. 

Under these preliminary assumptions, we now give a proof of Theorem \ref{thm:hemisphere-more-general}. After possibly relabelling, it suffices to consider three cases: 
\begin{itemize}
\item Case 1: Both $\Sigma_i$ are totally geodesic;

\item Case 2: $\Sigma_0$ is totally geodesic, but $\Sigma_1$ is not; or

\item Case 3: Neither $\Sigma_i$ is totally geodesic. 
\end{itemize}

In Case 1, each $\Gamma_i$ must be totally geodesic in $\pr N$, and hence intersect (e.g. by the usual Frankel property). Thus we may assume that we are in Case 2 or Case 3. We suppose for the sake of contradiction that $\overline{\Sigma}_0 \cap \overline{\Sigma}_1 = \emptyset$. 

Our first step is to describe a suitable domain and a suitable boundary for the Plateau problem. The hypersurfaces $\Sigma_0, \Sigma_1$ are both separating in the hemisphere $N$. Let $\Omega^{\pm}_i$ be the connected components of $N\setminus \Sigma_i$, labelled so that $\Omega^-_0\cap \Omega^-_1 = \emptyset$ (in other words $\Sigma_0 \subset \Omega_1^+$ and $\Sigma_1 \subset \Omega_0^+$). 

If we are in Case 2, then we replace $\Sigma_0$ by a perturbation as follows. Let $H_0\subset \mathbb{S}^{n+1}$ be the closed totally geodesic hypersurface containing $\Sigma_0$, and consider the parallel hypersurface $H^\epsilon_0$ of distance $\epsilon$ which intersects $\Omega^+_0$ (this the boundary of a geodesic ball in the sphere). The hypersurface $H^\epsilon_0$ has strictly positive mean curvature (pointing away from $H_0$). For suitably small $\epsilon > 0$, we redefine $\Sigma_0 := H^\epsilon_0 \cap N$. In this case, we also redefine $\Gamma_0 := \partial \Sigma_0 = H^\epsilon_0 \cap \pr N$ (which is now a strictly convex sphere in $\partial N$) and redefine $\Omega_0^{\pm}$ as above. Note that we choose $\epsilon>0$ sufficiently small so that $\Sigma_0$ remains disjoint from $\Sigma_1$. If we are in Case 3, then we leave $\Sigma_0$ unchanged. 

Now $\Omega:= \Omega_0^+ \cap \Omega_1^+$ has boundary which decomposes as $\partial \Omega= \Sigma_0 \sqcup \Sigma_1  \sqcup H' \sqcup \Gamma_0 \sqcup \Gamma_1$, where $H'$ is an open subset of $\partial N$, each $\Sigma_i$ has nonnegative mean curvature with respect to $\Omega$, and $\Gamma_i = \pr \Sigma_i$. Note $\partial H' = \Gamma_0 \sqcup \Gamma_1$ (compare Figure \ref{fig:domain} below). Fix a curve $\sigma$ which connects a point on $\Sigma_0$ with a point on $\Sigma_1$, and whose interior lies in $\Omega$. 

Having described our domain $\Omega$, we next choose a suitable boundary and a homology class for the Plateau problem.  
Consider a hypersurface $\Sigma'$, homologous to $\Sigma_0$, which separates $\Omega$; we may choose $\Sigma'$ to be to be smoothly and properly embedded in $\Omega$ with smooth boundary $\Gamma\subset H'$. Moreover, since $\Gamma_0 \cap \Gamma_1 = \emptyset$, we may arrange\footnote{For instance, consider the set of points in $\Omega$ a fixed distance $\epsilon>0$ from $\Sigma_0$, take a smooth approximation and use the room near the boundary to perturb if needed.} that $\Gamma$ is nowhere totally geodesic.\footnote{We remark that our proof strategy is not sensitive to whether $\Gamma$ is empty or not. However, in this setting, $\Gamma$ is necessarily nonempty as a consequence of the classical Frankel property.}

We find a current $T$ which is area-minimizing among $n$-currents supported in $\overline{\Omega}$ which are homologous to $[[\Sigma']]$ and satisfy $\partial T= [[\Gamma]]$ (c.f. \cite{FH}). Note that the (mod 2) intersection number of $\sigma$ with $[[\Sigma_0]]$ (hence $[[\Sigma']]$) is 1, so the same is true of $\sigma$ with $T$. 

We now rule out touching the barriers $\Sigma_i$. Suppose for the sake of contradiction that $\spt T\cap \Sigma_1 \neq \emptyset$. As $\Sigma_1$ is minimal with boundary in $\partial N$, it is certainly stationary in $(N,\pr N)$. Moreover, White's strong maximum principle (see Appendix \ref{sec:white}) implies that $\Sigma_1 \subset \spt T$; in fact, there is a neighbourhood $U$ of $\Sigma_1$ in $\overline{\Omega}$ so that $\spt T \cap \overline{U} = \Sigma_1$. But $T$ was, in particular, locally minimising away from $\Gamma$, and  $\Gamma$ is disjoint from $\overline{\Sigma_1}$ (hence $\overline{U}$). As any nonnegative function (multiplied by the inward normal) on $\Sigma_1$ will generate a variation of $\Sigma_1$ in $U$, it follows that $\Sigma_1$ must be stable in $(N,\pr N)$ with respect to one-sided variations. However, $\Sigma_1$ was assumed to not be totally geodesic, so this contradicts Proposition \ref{prop:current-bernstein}. As for $\Sigma_0$, either the same argument applies (if we did not perturb it), or else $\Sigma_0$ is strictly mean convex so White's maximum principle already implies $\spt T$ and $\Sigma_0$ are disjoint. 

Thus we have shown that $\spt T \cap \Sigma_i = \emptyset$ for each $i$. Again, since $T$ was (locally) minimising in $\overline{\Omega}$ amongst currents with fixed boundary $\Gamma\subset \pr N$, it follows that the (interior) regular part $\Sigma:= \reg T\subset\Omega$ is stable in $(N,\pr N)$. Moreover, by (interior) regularity for minimisers (e.g. \cite{Mo03}), we have $\dim_\mathcal{H} \sing T < n-2$. Note that $\Sigma$ is nonempty, as $\spt T$ had to intersect $\sigma$. 

But now Proposition \ref{prop:current-bernstein} implies that $\Sigma$ is totally geodesic. By the maximum principle\footnote{This says that if a stationary $n$-varifold has singular set of Hausdorff dimension less than $n-1$, then its support is connected if and only if its regular part is connected. } of Ilmanen \cite{Ilm96} and the Hausdorff estimate for the singular set, each connected component $\Sigma^{(i)}$ of $\spt |T|$ is a totally geodesic half-equator in $\overline{N}$.  It follows that each boundary $\pr \Sigma^{(i)}$ is in fact the same totally geodesic equator $\pr \Sigma$ in $\pr N=\mathbb{S}^n$. 

It remains to analyse the edges $\Gamma_i$. Suppose that $\pr \Sigma \subset \Gamma_i$. Then $\overline{\Sigma} \cap \overline{\Sigma_j}=\emptyset$ for $j\neq i$, and we have already found that $\Sigma$ is totally geodesic. That is, if we were in Case 3, we can repeat the argument on $\Sigma,\Sigma_j$, which reduces to Case 2. 

If we were already in Case 2 ($\Sigma_0$ totally geodesic), then the perturbed boundary $\Gamma_0 = \pr \Sigma'_0$ is certainly nowhere totally geodesic, so $\pr \Sigma$ cannot be contained in $\Gamma_0$. Then $\pr \Sigma \subset \Gamma_1$, and $\overline{\Sigma} \cap \overline{\Sigma}_0=\emptyset$. But in this case both $\Sigma, \Sigma_0$ are totally geodesic, which is impossible by Case 1. 

Thus we may assume that $\pr \Sigma$ is not contained in either $\Gamma_i$. But now since $\Gamma$ was nowhere totally geodesic, $\pr \Sigma$ also cannot be contained in $\Gamma$. But then $\pr\Sigma$, hence $\spt |T|$, must contain a point $p \in (\pr\Omega \cap \pr N) \setminus (\Gamma_0 \cup \Gamma_1\cup \Gamma) =  H' \setminus \Gamma$. White's strong maximum principle then implies that there is a neighbourhood $U'$ of $p$ so that $\spt |T| \cap U' \subset H'$. This contradicts that $p\in\pr\Sigma$, and completes the proof.

\subsection{Proof of Theorem \ref{thm:hemisphere}}

Suppose $\Sigma_0$ and $\Sigma_1$ are closed (compact without boundary) $n$-dimensional, embedded minimal hypersurfaces of the sphere $\mathbb{S}^{n+1}$. For $a \in \mathbb{S}^{n+1}$, let $N_a = \mathbb{S}^{n+1} \cap \{\langle x,a\rangle > 0\}$ be the open hemisphere centred at $a$. Let $\mathcal{I} := \{ a\in \mathbb{S}^{n+1} | \overline{N}_a \cap \overline{\Sigma}_0 \cap \overline{\Sigma}_1 \neq \emptyset\}$. By continuity, the set $\mathcal{I}$ is closed. Now by the transversality theorem, for almost every $a\in \mathbb{S}^{n+1}$, $\pr N_a$ will intersect each $\Sigma_i$ transversely. For such $a$, we have $(\Sigma_i, \Gamma_i) \subset (N_a, \partial N_a)$ is proper, so Theorem \ref{thm:hemisphere-more-general} applies, and yields that $a\in \mathcal{I}$. We conclude that $\mathcal{I} = \mathbb{S}^{n+1}$ as desired. 

\section{Bernstein theorems and Frankel properties in geodesic (half-)balls}
\label{sec:bernstein-fb-full}

In this section, we give a proof of the improved Frankel property for embedded free boundary minimal hypersurfaces in (half-)geodesic balls in a space-forms. 

\subsection{Bernstein result for stable currents in geodesic (half-)balls in space forms}
\label{sec:bernstein-fb-formal}

We now prove the analogue of Proposition \ref{prop:fb-smooth-bernstein} for stable currents. In the following proposition, let $M$ be one of the space forms and $N = B^{n+1}_{R, +}$ be a geodesic half-ball as described in Section 2. Recall the boundary of the half-ball consists of a geodesic closed component $D = \overline{\partial B^{n+1}_{R, +} \setminus \partial B^{n+1}_R}$ and an open component $S = \partial B^{n+1}_{R, +} \setminus D$ lying the boundary of full ball. 

\begin{proposition}\label{prop:fb-current-bernstein}
Suppose that $T \in \mathbb{I}_n(N, \partial N)$ is an integral $n$-current in a geodesic half-ball which is stationary in $(N, D)$. Assume the regular set $\Sigma = \reg T$ has finite area $\mathcal{H}^n(\Sigma) < \infty$, and the (relative) singular set $\sing T$ satisfies the Hausdorff dimension estimate $\mathrm{dim}_{\mathcal{H}} \sing T < n - 2$. If $\Sigma$ is stable in $(N, D)$ with respect to one-sided variations, then its second fundamental form $A_{\Sigma}$ vanishes everywhere.
\end{proposition}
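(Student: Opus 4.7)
The plan is to mirror the proof of Proposition \ref{prop:current-bernstein}, using $u := \sn \rho$ on $\Sigma$ as a Jacobi-type test function, where $\rho$ is the distance to the totally geodesic hyperplane $M'$ containing $D$. From \eqref{eq:space-form-test-function} we have $L_\Sigma u = |A_\Sigma|^2 u$ on $\Sigma$ together with the free-boundary Jacobi relation $(\pr_\eta - A_S(\nu,\nu))u = 0$ along $\pr \Sigma \cap S$, while $u > 0$ on all of $\overline{N}\setminus D$ and in particular on $\Sigma$.

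The first step is to derive the weighted one-sided stability inequality
\begin{equation*}
\int_\Sigma v^2 |A_\Sigma|^2 u^2 \leq \int_\Sigma u^2 |\nabla^\Sigma v|^2
\end{equation*}
for any nonnegative Lipschitz $v$ with compact support in $\Sigma \cup (\pr\Sigma \cap S)$. Plugging the nonnegative test function $uv$ into the one-sided version of \eqref{eq:fb-stability} and expanding as in Proposition \ref{prop:current-bernstein} gives
\begin{align*}
0 \leq{}& -\int_\Sigma v^2 |A_\Sigma|^2 u^2 + \int_\Sigma u^2 |\nabla^\Sigma v|^2 \\
&{}- \int_{\pr\Sigma \cap S} u^2 v\,\pr_\eta v + \int_{\pr\Sigma \cap S} uv\,(\pr_\eta - A_S(\nu,\nu))(uv);
\end{align*}
the Leibniz rule applied to $(\pr_\eta - A_S(\nu,\nu))(uv)$ in combination with the Jacobi identity $(\pr_\eta - A_S(\nu,\nu))u = 0$ forces the two boundary integrals to cancel exactly, leaving the desired inequality.

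Next I would insert $v = \psi_\delta\,\phi^\delta_\varepsilon$ into this inequality, where $\psi_\delta$ is a smooth cutoff on $N$ that equals $1$ on $N_{2\delta}$, vanishes on $B_\delta(D)$, and satisfies $|\nabla \psi_\delta| \leq C\delta^{-1}$, and $\phi^\delta_\varepsilon$ is the singular-set cutoff furnished by Lemma \ref{lem:cutoff}. Crucially, $\psi_\delta$ localises only near the \emph{fixed} boundary $D$ and not near the free boundary $S$, which is precisely what makes $v$ an admissible one-sided test function. Since $u$ is bounded on $N$ and vanishes Lipschitz-linearly on $D$ (so $u \leq C\delta$ on $\spt |\nabla \psi_\delta|$), the estimates of Proposition \ref{prop:current-bernstein} transfer directly to give
\begin{equation*}
\int_\Sigma |A_\Sigma|^2 u^2 \psi_\delta^2 (\phi^\delta_\varepsilon)^2 \leq 2\varepsilon + C\,\mathcal{H}^n(\Sigma \cap \Omega_\delta),
\end{equation*}
with $\Omega_\delta = N_{2\delta} \setminus B_\delta(D)$. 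Sending $\varepsilon \to 0$ and then $\delta \to 0$ and invoking Fatou yields $\int_\Sigma |A_\Sigma|^2 u^2 \leq 0$, and since $u > 0$ on $\Sigma$ we conclude $A_\Sigma \equiv 0$.

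The only genuinely new ingredient compared with the closed-hemisphere proof is the boundary cancellation on $\pr\Sigma \cap S$, and this is the main place where one must be careful: the cutoff $\psi_\delta$ has to be chosen so as to leave $S$ alone, and one must verify that the free-boundary contribution from $(\pr_\eta - A_S(\nu,\nu))(uv)$ is cancelled exactly by the divergence-theorem boundary term, which depends on the Jacobi relation holding with the correct sign. Once the identities in \eqref{eq:space-form-test-function} are in hand this is a direct computation, so no further substantive obstacle arises.
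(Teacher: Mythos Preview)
Your proposal is correct and follows essentially the same route as the paper's proof: both take $u=\sn\rho$, use the identities $L_\Sigma u=|A_\Sigma|^2 u$ and $(\partial_\eta-A_S(\nu,\nu))u=0$ to derive $\int_\Sigma v^2|A_\Sigma|^2 u^2\leq\int_\Sigma u^2|\nabla^\Sigma v|^2$ (with the free-boundary term cancelling the divergence boundary term exactly as you describe), and then plug in $v=\psi_\delta\phi^\delta_\varepsilon$ and pass to the limit. The paper simply records the boundary-cancellation computation and then writes ``the remainder of the argument goes through unchanged,'' which is precisely what you have spelled out.
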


\begin{proof}
The proof is essentially identical to the one given for Proposition \ref{prop:current-bernstein}. Here we let $\rho$ denote the distance to $D \subset \partial N$.  Let us introduce the notation $\partial_S \Sigma := \Sigma \cap S$ for the boundary regular points in $\Sigma$.

We take $u := \sn \rho |_{\Sigma} > 0$ as before, noting now that in addition to $L_{\Sigma} u = |A_{\Sigma} |^2 u$  we also have $(\partial_\eta - A_S(\nu, \nu))u = 0$. Arguing as we did above, we obtain
\begin{align*}
0&\leq -\int_\Sigma uv L_\Sigma(uv) + \int_{ \pr_S \Sigma} uv (\pr_\eta - A_S(\nu,\nu)) (uv)  \\
& = -\int_\Sigma \left( v^2 uL_\Sigma u +  2uv \langle \nabla u, \nabla v\rangle + u^2 \phi \Delta_{\Sigma} v \right) + \int_{ \pr_S \Sigma} \left(v^2 u(\pr_\eta - A_S(\nu,\nu))u + u^2 v \pr_\eta v\right) \\
& =  -\int_\Sigma \left( v^2 |A_\Sigma|^2 u^2 - u^2 |\nabla^\Sigma v|^2  \right) - \int_\Sigma \mathrm{div}_{\Sigma}( u^2 v \nabla^\Sigma v) + \int_{\pr_S \Sigma} u^2 v\pr_\eta v\\
& =  -\int_\Sigma \left( v^2 |A_\Sigma|^2 u^2 -  u^2 |\nabla^{\Sigma} v|^2 \right).
\end{align*}

This implies that for any nonnegative Lipschitz function $v$ with compact support on $\Sigma$, we will have the inequality 
\begin{equation}
\label{eq:current-bernstein}
\int_\Sigma v^2 |A_\Sigma|^2 u^2 \leq  \int_\Sigma u^2 |\nabla^{\Sigma} v|^2, 
\end{equation}
as before. The remainder of the argument goes through unchanged. 
\end{proof}

\subsection{Proof of Theorem \ref{thm:free-boundary-more-general}}

In this section, we establish the improved Frankel property stated in Theorem \ref{thm:free-boundary-more-general}. Again, the proof follows essentially as in the proof of Theorem \ref{thm:hemisphere-more-general}. For the convenience of the reader, we reproduce it below, but with certain modifications to handle the free boundary case: We adopt the convention that the hypersurfaces $\Sigma_i$ to include the portions of their boundary that intersect $S$ (the free boundary portions); this simplifies the notation, but means we need to use the Li-Zhou strong maximum principle to rule out certain touching along the free boundary. 


\begin{remark}
In the proof of Theorem \ref{thm:free-boundary-more-general}, we will produce a solution of the Plateau problem which lies between two free boundary minimal hypersurfaces $\Sigma_0, \Sigma_1$. The Plateau boundary $\Gamma$ will be a perturbation of the intersection of $\Sigma_0$ with the totally geodesic boundary of the half-ball. This intersection would be nonempty if a Frankel property were known for free boundary minimal hypersurfaces in the full ball (this is known when both interior and boundary curvatures are positive \cite{FL14}; for the remaining cases, the nonempty intersection would also follow from the maximum principle and/or our result for minimal hypersurfaces in the hemisphere). In any case, our proof is not sensitive to whether this intersection is empty or not, so it is actually a consequence that it cannot be empty.
\end{remark}

\begin{figure}\label{fig:domain}
\includegraphics[scale=0.18]{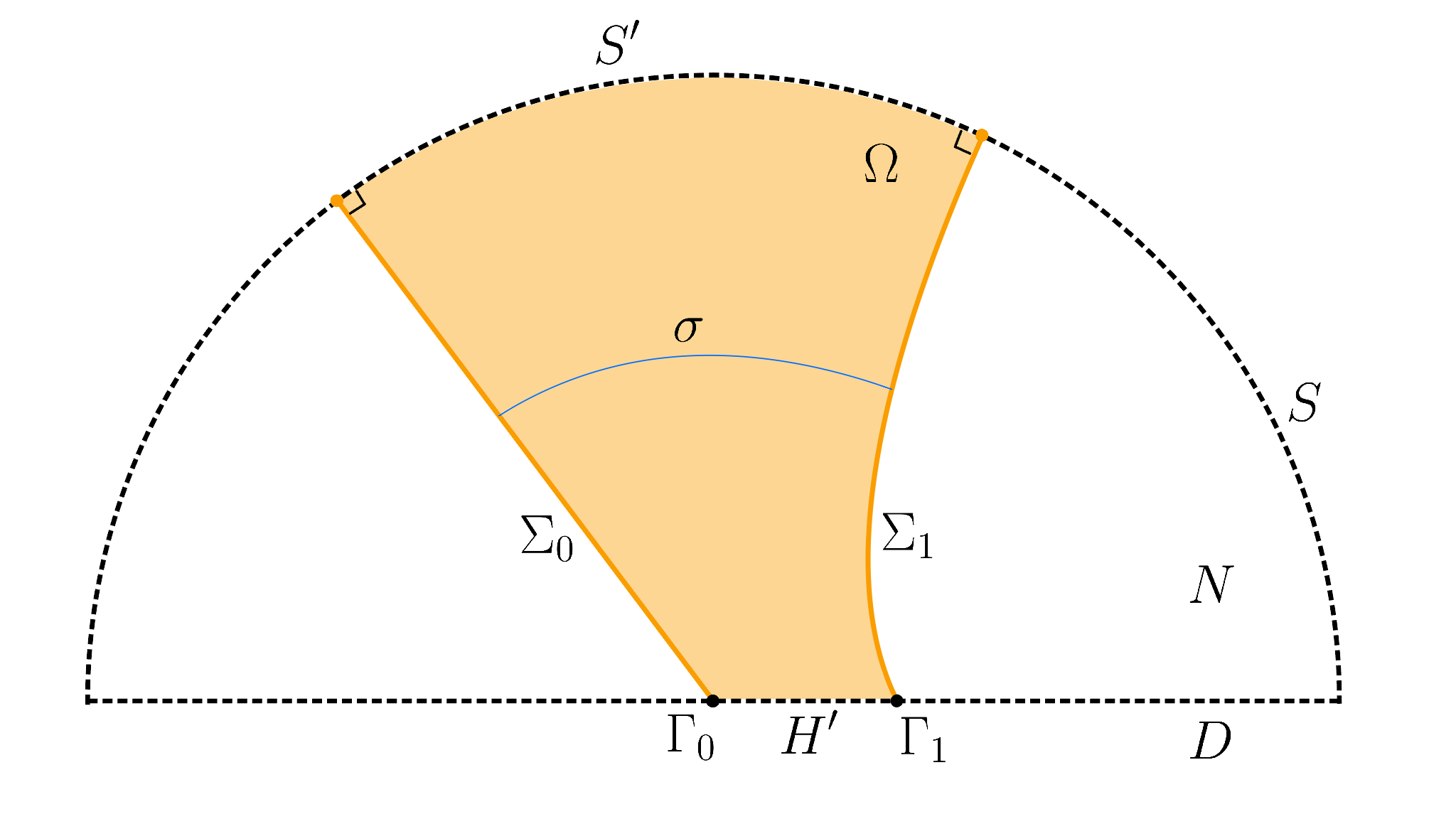}
\caption{Schematic of domain constructed in the proof.}
\end{figure}


Let $N= B^{n+1}_{R,+}$, $D= \overline{\pr B^{n+1}_{R,+} \setminus \pr B^{n+1}_{R}}$, $S= B^{n+1}_{R,+}\setminus D$ and consider connected, properly embedded, $S$ free boundary minimal hypersurfaces $\Sigma_i \subset \overline{N} \setminus D$, $i=0,1$. We emphasize once more our convention that $\partial_S \Sigma := \overline{\Sigma_i} \cap S \subset \Sigma_i$. We again have three cases: 
\begin{itemize}
\item Case 1: Both $\Sigma_i$ are totally geodesic;

\item Case 2: $\Sigma_0$ is totally geodesic, but $\Sigma_1$ is not; or

\item Case 3: Neither $\Sigma_i$ is totally geodesic. 
\end{itemize}

In Case 1, each $\Gamma_i$ must be totally geodesic in $\pr N$, and hence intersect (e.g. by the classification of totally geodesic hypersurfaces). Thus we may assume that we are in Case 2 or Case 3. We suppose for the sake of contradiction that $\overline{\Sigma}_0 \cap \overline{\Sigma}_1 = \emptyset$. 

Our first step is to describe a suitable domain and a suitable boundary for the Plateau problem. The hypersurfaces $\Sigma_0, \Sigma_1$ are both separating in the half-ball $N$. Let $\Omega^{\pm}_i$ be the connected components of $N\setminus \Sigma_i$, labelled so that $\Omega^-_0\cap \Omega^-_1 = \emptyset$ (in other words $\Sigma_0 \subset \Omega_1^+$ and $\Sigma_1 \subset \Omega_0^+$). 

If we are in Case 2, then we replace $\Sigma_0$ by a perturbation as follows. By the classification of totally geodesic hypersurfaces, there is a totally geodesic free boundary hypersurface $H_0\subset B^{n+1}_R$ which contains $\Sigma_0$. By Appendix \ref{sec:conformal-deformations} (see also Remark \ref{rmk:instability}), there is a deformation of $H_0$ by free boundary hypersurfaces $H^\epsilon_0$ in $B^{n+1}_R$, each of which lies to one side of $H_0$ and has strictly positive mean curvature (pointing away from $H_0$). 


For suitably small $\epsilon > 0$, we redefine $\Sigma_0 := H^\epsilon_0 \cap N$. In this case, we also redefine $\Gamma_0 := \partial \Sigma_0 = H^\epsilon_0 \cap \pr N$ and redefine $\Omega_0^{\pm}$ as above. Note that we choose $\epsilon>0$ sufficiently small so that $\Sigma_0$ remains disjoint from $\Sigma_1$. If we are in Case 3, then we leave $\Sigma_0$ unchanged. 

Now $\Omega:= \Omega_0^+ \cap \Omega_1^+$ has boundary which decomposes as $\partial \Omega= \Sigma_0 \sqcup \Sigma_1 \sqcup H'  \sqcup S' \sqcup \Gamma_0 \sqcup \Gamma_1$, where $H'$ is an open subset of $D$, $S'$ is an open subset of $S$, each $\Sigma_i$ has nonnegative mean curvature with respect to $\Omega$, and $\Gamma_i = \pr \Sigma_i \cap D$.  Note $\partial H' = \Gamma_0 \sqcup \Gamma_1$ (see Figure \ref{fig:domain} above). Fix a curve $\sigma$ which connects a point on $\Sigma_0$ with a point on $\Sigma_1$, and whose interior lies in $\Omega$.

Having described our domain $\Omega$, we next choose a suitable boundary and a homology class for the Plateau problem.  
Consider a hypersurface $\Sigma'$, homologous to $\Sigma_0$, which separates $\Omega$; we may choose $\Sigma'$ to be to be smoothly and properly embedded in $\Omega$ with smooth boundary $\Gamma\subset H'$. Moreover, since $\Gamma_0 \cap \Gamma_1 = \emptyset$, we may arrange that $\Gamma$ is nowhere totally geodesic.

We find a current $T$ which is area-minimizing among $n$-currents supported in $\overline{\Omega}$ which are homologous to $[[\Sigma']]$ and satisfy $\spt([[\Gamma]] - T ) \subset S$ (c.f. \cite{FH}, \cite{GR87}). Note that the (mod 2) intersection number of $\sigma$ with $[[\Sigma_0]]$ (hence $[[\Sigma']]$) is 1, so the same is true of $\sigma$ with $T$. 

We now rule out touching the barriers $\Sigma_i$. Suppose for the sake of contradiction that $\spt T\cap \Sigma_1 \neq \emptyset$. As $\Sigma_1$ is minimal with free boundary along $S=\pr N\setminus D$, it is certainly stationary in $(N,D)$. Moreover, the strong maximum principle (which also applies at orthogonal contact points along $S$; see Appendix \ref{sec:white}) implies that $\Sigma_1 \subset \spt T$; in fact, there is a neighbourhood $U$ of $\Sigma_1$ in $\overline{\Omega}$ so that $\spt T \cap \overline{U} = \Sigma_1$. But $T$ was, in particular, locally minimising away from $\Gamma$, and $\Gamma$ is disjoint from $\overline{\Sigma_1}$ (hence $\overline{U}$). As any nonnegative function (multiplied by the inward normal) on $\Sigma_1$ will generate a variation of $\Sigma_1$ in $U$, it follows that $\Sigma_1$ must be stable in $(N,D)$ with respect to one-sided variations. However, $\Sigma_1$ was assumed to not be totally geodesic, so this contradicts Proposition \ref{prop:fb-current-bernstein}.  As for $\Sigma_0$, either the same argument applies (if we did not perturb it), or else $\Sigma_0$ is strictly mean convex so the strong maximum principle already implies $\spt T$ and $\Sigma_0$ are disjoint. 

Thus we have shown that $\spt T \cap \Sigma_i = \emptyset$ for each $i$. Again, since $T$ is (locally) area-minimising amongst currents with fixed boundary $\spt(\pr T) \setminus S = \Gamma\subset D$ it follows that the (interior) regular part $\Sigma:= \reg T\subset\Omega \cup S'$ is stable in $(N,D)$. Moreover, by regularity for minimisers (\cite{GR87}; cf. \cite[Proof of Lemma 5.5]{LZ21b}), we have $\dim_\mathcal{H} \sing T < n-2$. Note that $\Sigma$ is nonempty, as $\spt T$ had to intersect $\sigma$. 

But now Proposition \ref{prop:current-bernstein} implies that $\Sigma$ is totally geodesic. By the maximum principle of Ilmanen \cite{Ilm96} and the Hausdorff estimate for the singular set, each connected component $\Sigma^{(i)}$ of $\spt |T|$ is the intersection of $H_i$ with $N$, where $H_i$ is a totally geodesic free boundary hypersurface in the full ball $B^{n+1}_R$.  It follows that each boundary $\pr \Sigma^{(i)}$ is in fact the same totally geodesic submanifold $\pr \Sigma= H_i \cap D$. 

It remains to analyse the edges $\Gamma_i$. Suppose that $\pr \Sigma \subset \Gamma_i$. Then $\overline{\Sigma} \cap \overline{\Sigma_j}=\emptyset$ for $j\neq i$, and we have already found that $\Sigma$ is totally geodesic. That is, if we were in Case 3, we can repeat the argument on $\Sigma,\Sigma_j$, which reduces to Case 2. 

If we were already in Case 2 ($\Sigma_0$ totally geodesic), then the perturbed boundary $\Gamma_0 = \pr \Sigma'_0$ is certainly nowhere totally geodesic, so $\pr \Sigma$ cannot be contained in $\Gamma_0$. Then $\pr \Sigma \subset \Gamma_1$, and $\overline{\Sigma} \cap \overline{\Sigma}_0=\emptyset$. But in this case both $\Sigma, \Sigma_0$ are totally geodesic, which is impossible by Case 1. 

Thus we may assume that $\pr \Sigma$ is not contained in either $\Gamma_i$. But now since $\Gamma$ was nowhere totally geodesic, $\pr \Sigma$ also cannot be contained in $\Gamma$. But then $\pr\Sigma$, hence $\spt |T|$, must contain a point $p \in (\pr\Omega \cap D) \setminus (\Gamma_0 \cup \Gamma_1\cup \Gamma) =  H' \setminus \Gamma$. The strong maximum principle then implies that there is a neighbourhood $U'$ of $p$ so that $\spt |T| \cap U' \subset H'$. This contradicts that $p\in\pr\Sigma$, and completes the proof.

\subsection{Proof of Theorem \ref{thm:free-boundary}}

Suppose $\Sigma_0$ and $\Sigma_1$ are $n$-dimensional, embedded, free boundary minimal hypersurfaces in $B^{n+1}_R$. For $a \in \mathbb{S}^{n}$, let $B_a = B^{n+1}_R \cap \{\langle x,a\rangle > 0\}$ denote the half-ball in the direction $a$ and $H_a = \{\langle x,a\rangle =0\}$. Also let $D_a = \pr B_a \cap H_a$, $S_a= \pr B_a\setminus H_a$ denote Dirichlet and Neumann boundaries respectively. 

Let $\mathcal{I} := \{ a\in \mathbb{S}^{n+1} | \overline{B}_a \cap \overline{\Sigma}_0 \cap \overline{\Sigma}_1 \neq \emptyset\}$. By continuity, the set $\mathcal{I}$ is closed. Now by the transversality theorem, for almost every $a\in \mathbb{S}^{n+1}$, each $\Sigma_i$ will intersect $H_a$ transversely. For such $a$, each $\Sigma_i \cap B_a$ will be a properly embedded, half-free boundary minimal hypersurface in $B_a$; hence Theorem \ref{thm:free-boundary-more-general} applies, and yields that $a\in \mathcal{I}$. We conclude that $\mathcal{I} = \mathbb{S}^{n}$ as desired. 

\section{Bernstein theorems and Frankel properties in subsets of Gaussian space}
\label{sec:gaussian-main}

In this section, we give a proof of the improved Frankel property in the Euclidean ball of radius $2 \sqrt{n}$ of Gaussian space. 

\subsection{Gaussian minimal hypersurfaces and self-shrinkers}
\label{sec:prelim-gauss}

In setting (III), a minimal hypersurface $\Sigma$ with respect to the Gaussian metric $g= \e^{-|x|^2/2n}g_{\mathbb{R}^{n+1}}$ may equivalently be regarded as a weighted minimal hypersurface in $(\mathbb{R}^{n+1},g_{\mathbb{R}^{n+1}})$ with respect to the weight $w:= \e^{-|x|^2/4}$. Such a hypersurface is often called a \textit{shrinker}, and satisfies the shrinker equation $\vec{H} = - \frac{1}{2}x^\perp$, where the mean curvature is calculated with respect to the Euclidean metric. 

The discussion in Section \ref{sec:stability} provides a description of stability for shrinkers interpreted as minimal hypersurfaces with respect to the Gaussian metric. For the remainder of this section, it will instead be convenient to express the stability properties of $\Sigma$ using the Euclidean metric (and the weight $w$; see also \cite{CM23}). 

Working in the Euclidean metric, it is convenient to denote the drift Laplacian 
\begin{equation}
\mathcal{L}_{\Sigma}v = \Delta_{\Sigma}v - \frac{1}{2} \langle x^\top, \nabla v \rangle
\end{equation}
and redefine a stability operator in terms of Euclidean quantities by
\[
L_{\Sigma} := \mathcal{L}_\Sigma + |A_{\Sigma}|^2 + \frac{1}{2}. 
\]

Up to the (smooth) conformal factor, the stability inequality with respect to $g$ is equivalent to the Gaussian-weighted stability inequality 
\begin{equation}\label{eq:gaussian-stability}
0 \leq  -\int_{\Sigma} v L_\Sigma v\, w,
\end{equation}
and the gradient stability inequality with respect to $g$ is equivalent to the gradient Gaussian-weighted stability inequality
\begin{equation}
\label{eq:gaussian-stability-gradient}
\int_{\Sigma} (|A_{\Sigma}|^2 + \frac{1}{2}) v^2 w \leq \int_{\Sigma} |\nabla^{\Sigma} v|^2 w. 
\end{equation}

We consider certain radial test functions:

\begin{lemma}
Let $\Sigma^n \subset \mathbb{R}^{n+1}$ be a hypersurface satisfying the shrinker equation. Let $r$ be the distance to the origin and $u(r) = 4n-r^2$. Then $u L_{\Sigma}u= \frac{1}{2}r^2(4n-r^2)$. 
\end{lemma}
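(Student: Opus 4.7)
The statement is a direct computation on $\Sigma$, using only the shrinker equation $\vec H = -\tfrac12 x^\perp$ and the standard identity that, for any ambient smooth $f$ and any $n$-dimensional submanifold $\Sigma \subset \mathbb{R}^{n+1}$,
\begin{equation*}
\Delta_\Sigma f = \operatorname{tr}_\Sigma D^2 f + \langle \nabla f, \vec H\rangle.
\end{equation*}
Applied to $f(x) = |x|^2$ (for which $D^2 f = 2I$ and $\nabla f = 2x$), this gives
\begin{equation*}
\Delta_\Sigma r^2 = 2n + 2\langle x, \vec H\rangle = 2n - |x^\perp|^2,
\end{equation*}
where in the last step I use the shrinker equation together with $\langle x, x^\perp\rangle = |x^\perp|^2$.

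Next, since $u = 4n - r^2$ I get $\nabla^\Sigma u = -2 x^\top$, and so
\begin{equation*}
\mathcal{L}_\Sigma u = \Delta_\Sigma u - \tfrac12 \langle x^\top, \nabla^\Sigma u\rangle = -(2n - |x^\perp|^2) + |x^\top|^2 = r^2 - 2n,
\end{equation*}
after applying the orthogonal decomposition $|x^\top|^2 + |x^\perp|^2 = r^2$. Adding the potential term $(|A|^2 + \tfrac12)u$ and simplifying yields
\begin{equation*}
L_\Sigma u = (r^2 - 2n) + \bigl(|A|^2 + \tfrac12\bigr)(4n - r^2) = \tfrac12 r^2 + |A|^2\, u.
\end{equation*}
Multiplying by $u$ produces the asserted $\tfrac12 r^2(4n-r^2)$ on the right-hand side; the additional $|A|^2 u^2$ contribution is exactly the term that is designed to be absorbed by the $|A|^2$ coefficient on the left of the weighted stability inequality \eqref{eq:gaussian-stability-gradient} when one tests with $uv$, in direct analogy with the role played by $L_\Sigma \sn\rho = |A|^2 \sn\rho$ in the space-form computations of Section~\ref{sec:test-function}.

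There is no real obstacle here: the whole argument is two applications of the submanifold Laplacian formula combined with the shrinker equation and simple algebra. The only point that requires care is the bookkeeping of sign conventions and the cancellation $r^2 - 2n + \tfrac12(4n - r^2) = \tfrac12 r^2$, which is what makes $u = 4n - r^2$ (rather than any other quadratic polynomial in $r$) the correct ansatz and which is ultimately what fixes the radius $2\sqrt{n}$ in Theorem~\ref{thm:shrinkers}.
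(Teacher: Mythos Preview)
Your computation is correct and follows essentially the same route as the paper: both arguments compute $\mathcal{L}_\Sigma u = r^2 - 2n$ by direct calculation on the shrinker, then add the potential. The paper first derives a general formula for $\mathcal{L}_\Sigma u$ for radial $u(r)$ in terms of $u', u''$ and then specializes, while you work directly with $|x|^2$; this is a packaging difference only.

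You are also right to flag the $|A_\Sigma|^2 u^2$ term: your calculation gives $uL_\Sigma u = \tfrac12 r^2(4n-r^2) + |A_\Sigma|^2 u^2$, and the lemma as stated omits the second summand. The paper's own use of the lemma immediately afterward (plugging into the stability inequality to obtain $\int_\Sigma \big(|A_\Sigma|^2 u^2 + \tfrac12 r^2(4n-r^2)\big) w \leq 0$) confirms that the $|A_\Sigma|^2 u^2$ term is intended, so your interpretation is the correct one.
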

\begin{proof}
As $\vec{H} = -\frac{1}{2}x^\perp$ and $\Delta_{\Sigma} r = \frac{1}{r}(n - |\nabla^\top r|^2) + \langle \vec{H}, \nabla r \rangle$, we first compute for a general radial test function $u = u(r)$ that 
\[ \begin{split} 
\Lap_\Sigma u - \frac{1}{2} \langle x^\top , \nabla u\rangle &=  u'' |\nabla^\top r|^2 + \frac{u'}{r}(n-|\nabla^\top r|^2) -\frac{1}{2} ru'
\\&= \left(\frac{n}{r}-\frac{r}{2}\right)u' + \left(u'' -\frac{u'}{r}\right) |\nabla^\top r|^2. 
\end{split}\]
Now the choice $u(r) = R^2-r^2$ satisfies $u'' = u'/r$, and one may further compute that 
\[ \left(\frac{n}{r}-\frac{r}{2}\right)u u' + \frac{1}{2}u^2 = \frac{1}{2}(R^2+r^2-4n)(R^2-r^2).\]
Taking $R^2= 4n$ gives the result. 
\end{proof}

This radial test function vanishes on $\pr B^{n+1}_{2\sqrt{n}}$, and in the stability inequality \eqref{eq:gaussian-stability} would give $\int_\Sigma \left( |A_\Sigma|^2 u^2 + \frac{1}{2}r^2(4n-r^2) \right)w \leq 0$. As the second term is strictly positive inside $B^{n+1}_{2\sqrt{n}}$, we have:

\begin{proposition}\label{prop:gaussian-ball-smooth-bernstein}
Suppose that $\Sigma^n \subset B^{n+1}_{2\sqrt{n}}$ is a properly embedded hypersurface satisfying the shrinker equation. Then $\Sigma$ is unstable. 
\end{proposition}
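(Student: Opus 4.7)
The plan is a direct application of the Gaussian stability inequality using the radial test function $u(r) = 4n - r^2$ furnished by the preceding lemma. Observe that $u|_\Sigma$ vanishes on $\partial \Sigma \subset \partial B^{n+1}_{2\sqrt{n}}$, so it is a natural candidate test function even though it is not compactly supported away from $\partial \Sigma$. Supposing for contradiction that $\Sigma$ is stable, and granting for a moment that $u$ is an admissible test function, the stability inequality \eqref{eq:gaussian-stability} together with the lemma gives
\begin{equation*}
0 \leq -\int_\Sigma u L_\Sigma u \, w = -\int_\Sigma \tfrac{1}{2} r^2 (4n - r^2) \, w.
\end{equation*}

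The next step is to observe that the right-hand side is strictly negative. Inside $B^{n+1}_{2\sqrt{n}}$ the integrand $\tfrac{1}{2} r^2(4n - r^2)$ is nonnegative, with strict positivity away from $\{r=0\}\cup\{r=2\sqrt{n}\}$. Since $\Sigma$ is a properly embedded $n$-dimensional hypersurface (in particular it is not supported on the union of the origin and the boundary sphere, which together have $n$-dimensional Hausdorff measure zero), the integral is strictly negative. This contradicts stability and finishes the proof.

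The only technical point to dispose of is that $u$ does not strictly lie in $C^\infty_0(\Sigma)$ — it vanishes on $\partial \Sigma$ but not in a neighborhood of it. I would handle this with a standard approximation: consider the Lipschitz functions $u_\epsilon := \max(u - \epsilon, 0)$, which are compactly supported in $\{r < \sqrt{4n - \epsilon}\}\cap \Sigma$ and therefore vanish near $\partial \Sigma$. Applying the gradient Gaussian-weighted stability inequality \eqref{eq:gaussian-stability-gradient} with $v = u_\epsilon$ and passing to the limit $\epsilon \to 0$ via dominated convergence (using $|u_\epsilon|\le |u|$ and $|\nabla^\Sigma u_\epsilon| \le |\nabla^\Sigma u|$ pointwise, and the smoothness of $u$), one obtains
\begin{equation*}
\int_\Sigma \bigl(|A_\Sigma|^2 + \tfrac{1}{2}\bigr) u^2 \, w \leq \int_\Sigma |\nabla^\Sigma u|^2 \, w.
\end{equation*}
Since $u$ vanishes on $\partial \Sigma$, integration by parts with the weight $w$ converts this into $-\int_\Sigma u L_\Sigma u \, w \ge 0$, at which point the computation of the first paragraph delivers the contradiction. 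There is no serious obstacle: the argument is essentially a one-line application of the lemma, and the approximation step is routine since the test function is smooth with a controlled zero set on $\partial \Sigma$.
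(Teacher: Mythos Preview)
Your proof is correct and follows essentially the same approach as the paper: plug the radial test function $u = 4n - r^2$ from the preceding lemma into the Gaussian stability inequality and observe that the resulting integrand is nonnegative and not identically zero on $\Sigma$. The paper's argument is the one-paragraph sketch immediately before the proposition; your cutoff $u_\epsilon = \max(u-\epsilon,0)$ simply makes explicit the routine approximation that the paper leaves implicit. (One minor point: the lemma as stated actually computes $u(\mathcal{L}_\Sigma + \tfrac12)u$, so the full expression $uL_\Sigma u$ carries an extra $|A_\Sigma|^2 u^2 \ge 0$ term, which only strengthens the contradiction.)
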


As before, we will need the version of Proposition \ref{prop:gaussian-ball-smooth-bernstein} for singular shrinkers.

\subsection{Bernstein result for stable currents in Gaussian space}
\label{sec:bernstein-gaussian-formal}

We now prove the analogue of Proposition \ref{prop:gaussian-ball-smooth-bernstein} for stable currents.

\begin{proposition}\label{prop:gaussian-current-bernstein}
Let $N=B^{n+1}_{2\sqrt{n}}\subset \mathbb{R}^{n+1}$ denote the ball of Euclidean radius $2\sqrt{n}$. Suppose that $T \in \mathbb{I}_n(N, \partial N)$ is an integral $n$-current which is stationary in $(N, \pr N)$ with respect to the Gaussian metric. Assume the regular set $\Sigma = \reg T$ has finite Gaussian area $\int_{\Sigma} w \,d\mathcal{H}^n < \infty$, and the (relative) singular set $\sing T$ satisfies the Hausdorff dimension estimate $\mathrm{dim}_{\mathcal{H}} \sing T < n - 2$. If $\Sigma$ is stable in $(N, \pr N)$ with respect to one-sided variations, then its Euclidean second fundamental form $A_{\Sigma}$ vanishes everywhere.
\end{proposition}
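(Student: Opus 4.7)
The plan is to mimic the proofs of Propositions \ref{prop:current-bernstein} and \ref{prop:fb-current-bernstein} verbatim, with $u = (4n-r^2)|_\Sigma$ playing the role of $\sn\rho$. The key algebraic identity from Section \ref{sec:prelim-gauss} is $u L_\Sigma u = |A_\Sigma|^2 u^2 + \tfrac{1}{2}r^2(4n-r^2)$, and $u$ vanishes exactly on $\partial N = \{r = 2\sqrt{n}\}$, while being strictly positive on the regular set $\Sigma$ (which lies inside the open ball). No free-boundary terms appear since $S = \emptyset$ in setting (III).

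First, for any nonnegative Lipschitz $v$ with compact support in $\Sigma$, I would plug $uv$ into the Gaussian stability inequality \eqref{eq:gaussian-stability}. Expanding
$L_\Sigma(uv) = v L_\Sigma u + u \mathcal{L}_\Sigma v + 2\langle \nabla u, \nabla v\rangle$
and integrating by parts in the weighted measure $w\,d\mathcal{H}^n$ on $\Sigma$ (the weighted divergence term $\int_\Sigma \mathrm{div}_\Sigma(u^2 v\, w\, \nabla^\Sigma v)$ vanishes by the compact support of $v$), one obtains the weighted Caccioppoli-type inequality
\begin{equation*}
\int_\Sigma v^2 \bigl( |A_\Sigma|^2 u^2 + \tfrac{1}{2} r^2(4n-r^2) \bigr)\, w \;\leq\; \int_\Sigma u^2 |\nabla^\Sigma v|^2\, w,
\end{equation*}
the Gaussian analogue of \eqref{eq:current-bernstein}.

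Next, I would choose $v = \psi\phi$. The ambient cutoff $\psi = \psi_\delta$ satisfies $\psi\equiv 1$ on $N_{2\delta}$, $\psi\equiv 0$ on $B_\delta(\partial N)$, and $|\nabla\psi|\leq C/\delta$; this is needed because $u$ vanishes on $\partial N$. The cutoff $\phi = \phi_\varepsilon^\delta$ supplied by Lemma \ref{lem:cutoff} removes a neighborhood of $\sing T \cap N_\delta$ with $\int_{\Sigma\cap N_\delta}|\nabla^\Sigma\phi|^2\leq\varepsilon$; since $w\leq 1$ the corresponding weighted bound is immediate. Then $v = \psi\phi$ is compactly supported in $\Sigma$, so the Caccioppoli inequality applies. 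Using $|\nabla^\Sigma v|^2 \leq 2\psi^2|\nabla^\Sigma\phi|^2 + 2\phi^2|\nabla^\Sigma\psi|^2$, the $\phi$-gradient piece is controlled by $C\varepsilon$ (using $u\leq 4n$). For the $\psi$-gradient piece, on $\spt|\nabla\psi|\subset\{x:\delta\leq d(x,\partial N)\leq 2\delta\}$ we have $r\geq 2\sqrt{n}-2\delta$, so $u = 4n-r^2\leq 8\sqrt{n}\,\delta$, and hence $u|\nabla\psi|\leq C$. Thus the $\psi$-piece is at most $C$ times the weighted area of a thin shell shrinking to $\partial N$, which tends to $0$ as $\delta \to 0$ by dominated convergence (using the finite Gaussian area hypothesis).

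Sending $\varepsilon\to 0$ and then $\delta\to 0$, Fatou on the left gives $\int_\Sigma(|A_\Sigma|^2 u^2 + \tfrac{1}{2}r^2(4n-r^2))\,w \leq 0$, and since $u > 0$ pointwise on $\Sigma$, both terms vanish; in particular $A_\Sigma \equiv 0$ (and in fact the second term also forces $\Sigma$ to be empty, consistent with the smooth statement in Proposition \ref{prop:gaussian-ball-smooth-bernstein}). The main obstacle is simply the bookkeeping in the weighted integration by parts and matching the $O(\delta)$ decay of $u$ near $\partial N$ against the $O(1/\delta)$ blow-up of $|\nabla\psi|$; both are routine adaptations of the earlier proofs, with the only genuinely new ingredient being the algebraic identity for $uL_\Sigma u$ already recorded in Section \ref{sec:prelim-gauss}.
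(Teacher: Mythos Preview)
Your proposal is correct and follows essentially the same approach as the paper's proof: plug $uv$ into the Gaussian stability inequality with $u=4n-r^2$, integrate by parts in the weighted measure, then use the two cutoffs $\psi_\delta$ and $\phi_\varepsilon^\delta$ exactly as in Proposition \ref{prop:current-bernstein}. The only cosmetic differences are that the paper drops the extra positive term $\tfrac{1}{2}r^2(4n-r^2)$ (writing $L_\Sigma u \geq |A_\Sigma|^2 u$ instead of the equality) and cites \cite{Z20} rather than Lemma \ref{lem:cutoff} for the singular-set cutoff, but your use of Lemma \ref{lem:cutoff} is valid since on the bounded ball the Gaussian and Euclidean areas are comparable and Gaussian-stationary implies bounded Euclidean mean curvature.
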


\begin{proof}[Proof of Proposition \ref{prop:gaussian-current-bernstein}.]
Note in the statement, area (hence stationarity and stability) are understood in the Gaussian metric. 
Working with the Euclidean structure in what follows, take $u$ to be: the (Euclidean) distance to $\pr N$ if $N$ was a half-space; or the radial function $r = 4n-|x|^2$ if $N$ was a ball. In either case, we have $L_\Sigma u \geq |A_\Sigma|^2 u$. As in the previous settings, for any smooth $v\geq 0$ the Gaussian stability inequality \eqref{eq:gaussian-stability} gives
\begin{align*}
0&\leq -\int_\Sigma uv L_\Sigma(uv)\, w   \\
& = -\int_\Sigma \left( v^2 uL_\Sigma u +  2uv \langle \nabla u, \nabla v\rangle + u^2 \phi \mathcal{L}_{\Sigma} v \right) w \\
& \leq  -\int_\Sigma \left( v^2 |A_\Sigma|^2 u^2 - u^2 |\nabla^\Sigma v|^2  \right) w  + \mathrm{div}_{\Sigma}( u^2 v (\nabla^\Sigma v) w) \\
& =  -\int_\Sigma \left( v^2 |A_\Sigma|^2 u^2 -  u^2 |\nabla^{\Sigma} v|^2 \right) w,
\end{align*}
and in particular the final inequality will hold for any Lipschitz $v\geq 0$. 

Let $N_{\delta} = \{x\in N \;: \; d(x,\pr N) \geq \delta\}$, where the distance is again Euclidean. We assert that for any $\delta, \epsilon >0$, there is a Lipschitz cutoff $\phi : N \to [0,1]$ which vanishes in a neighbourhood of $\sing T \cap N_\delta$, and which satisfies $\int_{\Sigma \cap {N_\delta}} |\nabla^\Sigma \phi|^2 w \leq\epsilon$. For instance, one may use the cutoffs constructed in \cite[Section 5]{Z20} (with $R= 2\sqrt{n}-\delta$). 

Now again take $\psi = \psi_{\delta} : N \to [0,1]$ to be a smooth cutoff function on $N$ such that $\psi \equiv 1$ in $N_{2\delta}$, $\psi \equiv 0$ on $N\setminus N_\delta$, and $|\nabla \psi| \leq C/\delta$ in the cutoff region. Again arguing as in the proof of Proposition \ref{prop:current-bernstein}, taking $v = \psi\phi$, we will have 
\[ 
\int_\Sigma |A_\Sigma|^2 u^2 \psi^2 \phi^2 w \leq 2\epsilon + 2\int_\Sigma u^2 \phi^2 |\nabla^\Sigma \psi|^2 w \leq 2\epsilon + C\int_{\Sigma \cap N_{2\delta}\setminus N_\delta} w \, d\mathcal{H}^n. 
\] 

where $C$ is independent of $\delta,\epsilon$. As $\Sigma$ has finite Gaussian area, taking $\delta,\epsilon\to0$ yields that $\int_\Sigma |A_\Sigma|^2 u^2 =0$, hence $A_\Sigma\equiv0$ as desired.
\end{proof}

\subsection{Proofs of Theorem \ref{thm:ball-shrinker-more-general}}

The proofs of Theorem \ref{thm:ball-shrinker-more-general} is essentially the same as the proof of Theorem \ref{thm:hemisphere-more-general} verbatim, except that:

\begin{itemize}
\item Area, stationarity, stability and mean curvature should be understood with respect to the Gaussian metric, except that `totally geodesic' should be understood as totally geodesic in the Euclidean metric;
\item The perturbation in Case 2 should proceed using the parallel hyperplanes $H^\epsilon_0$ of Euclidean distance $\epsilon$ from the (Euclidean) totally geodesic plane $H_0$. Note that these have vanishing Euclidean mean curvature but strictly positive \textit{Gaussian} mean curvature: If $H_0 = \{x_0=0\}$ so that $H_0^\epsilon = \{x_0 =\epsilon\}$, then the Gaussian mean curvature of $H^\epsilon_0$ is given by \[\e^{|x|^2/4n}\frac{1}{2} \langle x, \nu\rangle = \e^{|x|^2/4n}\frac{1}{2} \langle x, e_0\rangle = \frac{\epsilon}{2} \e^{|x|^2/4n}.\] 
The perturbed surface is then $H^\epsilon_0 \cap B^{n+1}_{2\sqrt{n}}$.
\end{itemize}

\section{A variational approach to half-space type Frankel properties}

In the following sections, we establish differential inequalities for the distance to a hypersurface. Such differential inequalities have been established before in various settings, and are typically proved either via a length variation argument or via Riccati comparison and the Bochner formula - cf. Petersen-Wilhelm \cite{PW03}, Choe-Fraser \cite{CF18}, Wei-Wylie \cite{WW09} and Moore-Woolgar \cite{MW21}. Riccati comparison is rather efficient where the distance function is smooth, while the length variation approach appears to be more effective at non-smooth points of the distance function. The comparison result we need is closest to the one established in \cite{MW21}, but they do not appear to address the length variation proof for the case we need, which is needed to establish the differential inequality in full strength. These differential inequalities may be used to prove certain Frankel properties for \textit{weighted} minimal hypersurfaces when the ambient $\alpha$-Bakry-\'{E}mery-Ricci curvature is nonnegative (cf. \cite{MW21}, and Proposition \ref{prop:d-maximum-principle} below).

These results may be used to give an alternative proof of the half-space Frankel properties. In each of the setting of interest (i.e. settings (I), (II), and (III) from Section \ref{sec:prelims}), the appropriate half-space becomes isometric to hyperbolic space after a conformal change by the same test functions we used in the stability arguments above. Moreover, we find that the $\alpha$-Bakry-\'{E}mery-Ricci curvature is nonnegative for a suitable $\alpha$. (In fact, we will need the critical \textit{negative} parameter value $\alpha=-n$.) Considering the distance functions with respect to the conformally changed metric will then allow us to deduce the half-space Frankel properties. 

\subsubsection{Weighted notions}

Recall that for $\alpha \in (\mathbb{R}\cup\{\infty\}) \setminus \{0\}$ and $f$ a smooth function, the $\alpha$-Bakry-\'{E}mery-Ricci tensor is given by $\Ric^\alpha_f := \Ric + \nabla^2 f - \frac{1}{\alpha} df\otimes df$ and the drift Laplacian is given by $\Lap_f := \Lap - \langle \nabla f, \nabla \cdot\rangle$. The $f$-mean curvature vector of a hypersurface $\Sigma$ is given by $\vec{H}_{\Sigma, f} := \vec{H}_{\Sigma} + \nabla^\perp f$. A hypersurface $\Sigma$ is said to be $f$-minimal in $(M, g, e^{-f})$ if $\vec{H}_{\Sigma, f} = 0$. 

\subsection{Conformal change in space forms}

Recalling the notation introduced previously in Section \ref{sec:test-function}, we first prove the following lemma.

\begin{lemma}
\label{lem:conformal-change}
Let $(M , g)$ be an $(n+1)$-dimensional simply connected space form with curvature $\kappa \in \{-1, 0, 1\}$. Consider any complete totally geodesic hypersurface $M' \subset M$ and let $\tilde{M}$ be one of the connected components of $M \setminus M'$. Let $\rho: M \to [0, \infty)$ be the distance function to $M'$. Let $e^{-f} = (\sn \rho)^n$. 

Then:
\begin{enumerate}
\item $(\tilde{M}, \tilde{g})$ is complete with respect to the conformal metric $\tilde{g} = (\sn \rho)^{-2} g = e^{\frac{2}{n} f} g$. In fact, $(\tilde{M}, \tilde{g})$ is isometric to $\mathbb{H}^{n+1}$. 
\item Minimal hypersurfaces in $(\tilde{M}, g)$ correspond to $f$-minimal hypersurfaces in $(\tilde{M}, \tilde{g}, e^{-f})$. 
\item $(\tilde{M}, \tilde{g}, e^{-f})$ has $\wt{\Ric}^\alpha_f = 0$, where $\alpha =-n$. 
\item If $B^{n+1}_R$ is any geodesic ball in $(M, g)$ of radius $R$, centred at a point on $M'$, then $\pr B^{n+1}_R \cap \tilde{M}$ is totally geodesic in $(\tilde{M},\tilde{g})$.
\end{enumerate} 
\end{lemma}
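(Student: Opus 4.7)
The plan is to verify each of (1)--(4) using three main inputs: the warped-product representation $g = d\rho^2 + \cs^2(\rho)\,g'$ on $\tilde{M}\setminus M'$ (as in the proof of Lemma \ref{lem:space-form-test-function}), the Hessian identity $\nabla^2 \sn\rho = -\kappa\sn\rho\,g$, and the ``Pythagorean'' identity $\kappa\sn^2\rho + \cs^2\rho = 1$. For (1), direct substitution gives $\tilde{g} = \sn^{-2}(\rho)\,d\rho^2 + \ct^2(\rho)\,g'$. Introducing the coordinate $t$ with $dt = -d\rho/\sn\rho$ (chosen so that $t \to +\infty$ as $\rho\to 0^+$) places $\tilde g$ into a standard warped-product representation of $\mathbb{H}^{n+1}$: the upper half-space model for $\kappa=0$; geodesic polar coordinates $dt^2+\sinh^2(t)\,g_{\mathbb{S}^n}$ for $\kappa=+1$; and the equidistant foliation $dt^2+\cosh^2(t)\,g_{\mathbb{H}^n}$ for $\kappa=-1$. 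In each case completeness and the isometry with $\mathbb{H}^{n+1}$ are immediate. In particular $\wt{\Ric} = -n\,\tilde g$.

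Claim (2) follows from the standard conformal change formula for the mean curvature vector. Writing $\tilde g = e^{2\phi}g$ with $\phi = f/n = -\log\sn\rho$, one has $\vec{H}_{\tilde g} = e^{-2\phi}\bigl(\vec{H}_g - n(\nabla^g\phi)^\perp\bigr)$ and $(\nabla^{\tilde g}f)^{\tilde\perp} = e^{-2\phi}(\nabla^g f)^\perp$. Using $n\phi = f$ then gives
\[ \vec{H}_{\tilde g} + (\nabla^{\tilde g}f)^{\tilde\perp} = e^{-2\phi}\vec{H}_g, \]
so minimality in $g$ is equivalent to $f$-minimality in $(\tilde g, e^{-f})$.

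For (3), the central calculation applies the conformal Hessian formula
\[ \tilde\nabla^2 u = \nabla^2 u - d\phi\otimes du - du\otimes d\phi + g(\nabla u,\nabla\phi)\,g \]
with $u = \sn\rho$ and $\phi = -\log\sn\rho$. Substituting $\nabla^2\sn\rho = -\kappa\sn\rho\,g$, $|\nabla\sn\rho|^2 = \cs^2\rho$, and the Pythagorean identity reduces this to
\[ \tilde\nabla^2 \sn\rho = -\sn\rho\,\tilde g + \frac{2}{\sn\rho}\,d\sn\rho\otimes d\sn\rho. \]
Expanding $\tilde\nabla^2 f = -n(\tilde\nabla^2\sn\rho/\sn\rho - d\sn\rho\otimes d\sn\rho/\sn^2\rho)$ and combining with $\tfrac{1}{n}df\otimes df = (n/\sn^2\rho)\,d\sn\rho\otimes d\sn\rho$ yields the key identity $\tilde\nabla^2 f + \tfrac{1}{n}\,df\otimes df = n\,\tilde g$. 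Together with $\wt{\Ric} = -n\tilde g$ from (1), this gives $\wt{\Ric}^{-n}_f = 0$.

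Finally, for (4), the conformal change formula for the second fundamental form gives
\[ \tilde A_S(X,Y) = e^{\phi}\bigl(A_S(X,Y) + (\partial_\eta\phi)\,g(X,Y)\bigr) \]
for $X,Y$ tangent to $S = \partial B^{n+1}_R \cap \tilde M$ and $\eta = \partial_r$ the outward normal in $g$. By Lemma \ref{lem:space-form-test-function}, $\partial_r\sn\rho = \ct(r)\sn\rho$ on $S$, so $\partial_\eta\phi = -\ct R$; combining with $A_S = \ct(R)\,g|_S$ yields $\tilde A_S \equiv 0$. The main obstacle throughout is the Hessian computation in (3); once the conformal Hessian formula is combined with the trigonometric identity $\kappa\sn^2\rho+\cs^2\rho=1$, however, each of (1)--(4) reduces to direct verification.
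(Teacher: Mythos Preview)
Your proof is correct, and for parts (3) and (4) it coincides with the paper's approach: both apply the conformal Hessian formula to obtain $\tilde\nabla^2 f = n\tilde g - \tfrac{1}{n}\,df\otimes df$ (the paper works directly with $\phi = f/n$ rather than passing through $\tilde\nabla^2\sn\rho$, but the computation is equivalent), and both settle (4) by combining the conformal second fundamental form formula with the identity $\partial_r\sn\rho = \ct(r)\sn\rho$ from Lemma~\ref{lem:space-form-test-function}.

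For (1) and (2) you take genuinely different routes. For (1), the paper computes the conformal Riemann tensor via the Kulkarni--Nomizu--type formula, observing that $\nabla^2\phi = \kappa g + d\phi\otimes d\phi$ forces $\Phi = (\kappa + \tfrac12\ct^2\rho)g$ to be pure trace, so $\tilde R$ has constant sectional curvature $-(\kappa+\ct^2\rho)\sn^2\rho = -1$; it then invokes simple-connectedness. This is uniform in $\kappa$. Your warped-product argument is more explicit: the coordinate change $dt = -d\rho/\sn\rho$ realises $\tilde g$ in each case as a standard model of hyperbolic space, which is more concrete but requires separate verification for each $\kappa$. For (2), the paper's argument is a one-line variational observation---since $\tilde g = e^{2f/n}g$, the induced measures on an $n$-dimensional hypersurface satisfy $e^{-f}\,d\tilde\mu_\Sigma = d\mu_\Sigma$, so the $g$-area functional equals the weighted $\tilde g$-area functional---whereas you use the pointwise conformal formula for the mean curvature vector. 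Both are standard; the paper's is slightly slicker, yours makes the correspondence of the Euler--Lagrange equations explicit.
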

\begin{proof}

Let $\phi = -\log \sn \rho = \frac{1}{n} f$, so that $\tilde{g}= e^{2\phi} g$. Then $d\phi = - \frac{1}{\sn \rho} d(\sn \rho)$, so $|d\phi|_{g}^2 = \ct^2\rho$ and (recalling Lemma \ref{lem:space-form-test-function}) 
\[
\nabla^2\phi = -\frac{1}{\sn \rho} \nabla^2 \sn\rho + \frac{1}{\sn^2\rho} d(\sn\rho)\otimes d(\sn\rho) = \kappa g+ d\phi \otimes d\phi. 
\]
In the notation of Appendix \ref{sec:conformal-change}, we then have $\Phi =(\kappa+ \frac{1}{2}\ct^2\rho)g$, and hence (recalling $\ct^2 \rho + \kappa  =(\sn \rho)^{-2}$)  the Riemann curvature of conformal metric $\tilde{g}$ is 
\[
\tilde{R}_{ijkl} = -\frac{\kappa + \ct^2\rho}{\sn^2\rho}(g_{ik}g_{jl} - g_{il}g_{jk}) = -(\tilde{g}_{ik}\tilde{g}_{jl} - \tilde{g}_{il} \tilde{g}_{jk}).
\] 
Thus $(\tilde{M},\tilde{g})$ has constant curvature $-1$. As $\tilde{M}$ is also simply-connected this establishes (1). 

For (2), if $\Sigma \subset \tilde{M}$ is a hypersurface, then the weighted conformal volume (with $f = n \phi$) is precisely related to $g$-induced volume form by $e^{-f} d\tilde{\mu}_{\Sigma} = e^{-f} e^{n\phi} d\mu_{\Sigma} =  d\mu_{\Sigma}$. Thus $f$-minimal hypersurfaces with respect to $\tilde{g}$ are exactly minimal with respect to $g$. 

For (3), we already have $\wt{\mathrm{Ric}} = -n \tilde{g}$ and the formula for the Hessian under conformal change Appendix \ref{sec:conformal-change} gives
\begin{align*}
\wt{\nabla}^2 f &= n(\nabla^2 \phi- 2 d\phi \otimes d\phi + |d\phi|^2 g) \\
& = n (\kappa + \ct^2 \rho) g - n \, d\phi \otimes d\phi \\
& = n \tilde{g}  - n \, d \phi \otimes d\phi. 
\end{align*}
Hence the $\infty$-BE Ricci curvature of $(\tilde{M},\tilde{g},e^{-f})$ is
\[
\wt{\Ric}_f = \wt{\mathrm{Ric}} + \wt{\nabla}^2 f = -n \, d\phi\otimes d\phi = -\frac{1}{n} df\otimes df. 
\]
Consequently, with $\alpha = -n$, we have shown $\wt{\Ric}^\alpha_f = 0$. 

Finally, for (4) recall that the second fundamental form of $S = \partial B^{n+1}_R$ (oriented by the inward-pointing normal $\nu = -\partial_r$ and with respect to $g$) is given by $A_S = (\ct R) g$. Under the conformal change, one obtains
\[
\tilde{A}_S = e^{-\phi}(A_S - \partial_\nu \phi \,g) = 0
\] 
since (as in the proof of Lemma \ref{lem:space-form-test-function}) $\partial_\nu \phi = \ct \rho \langle \nabla r, \nabla \rho \rangle = \ct r$. 

\end{proof}

\subsection{Conformal change in Gaussian space} \label{sec:gaussian-conformal}

If we consider the Euclidean space with the Gaussian weight, $(\mathbb{R}^{n+1}, g_{\mathbb{R}^{n+1}}, e^{-\gamma})$ where $\gamma = r^2/4$ and $r = |x|$, a similar result can be deduced in the Gaussian setting. Namely, given $R > 0$ consider $\rho := R^2 - r^2$; let $\tilde{M} := B^{n+1}_R$ be a Euclidean ball of radius $R$; and take $\tilde{g} = \rho^{-2} g= e^{\frac{2f}{n}}g$ with $e^{-f} = \rho^n$. Then, similarly to the above, $\gamma$-minimal hypersurfaces in $(\tilde{M}, g, \gamma)$ (which are just shrinkers) correspond to $(\gamma + f)$-minimal hypersurfaces in $(\tilde{M}, \tilde{g}, e^{-f-\gamma})$. By straightforward computation, we have 
\[
\tilde{R}_{ijkl} = - \frac{4 R^2}{(R^2 - r^2)^4}(g_{ik} g_{jl} - g_{il} g_{jk}) = - 4R^2 (\tilde{g}_{ik}\tilde{g}_{jl} - \tilde{g}_{il} \tilde{g}_{jk}).
\]
So (as expected) we recognize $(\tilde{M}, \tilde{g})$ as the Poincar\'e disk up to scaling. Now observe 
\begin{align*}
\wt{\nabla}^2 \gamma &= \nabla^2 \gamma - \frac{1}{n} d\gamma \otimes df -\frac{1}{n} df \otimes d\gamma + \frac{1}{n} g(\nabla \gamma, \nabla f) g \\
&= \frac{R^2 + r^2}{2(R^2 -r^2)} g - \frac{1}{n} d\gamma \otimes df -\frac{1}{n} df \otimes d\gamma.
\end{align*}
Computing the weighted Ricci, we find 
\begin{align*}
\wt{\mathrm{Ric}}_{f+\gamma} &= \wt{\mathrm{Ric}} + \wt{\nabla}^2 f + \wt {\nabla}^2 \gamma \\
&= -4n R^2 \tilde{g} +\Big(\frac{2n}{R^2 -r^2} + \frac{4nr^2}{(R^2 - r^2)^2} + \frac{R^2 + r^2}{2(R^2 -r^2)}\Big)g \\
& \qquad - \frac{1}{n} df \otimes df - \frac{1}{n} d\gamma \otimes df -\frac{1}{n} df \otimes d\gamma \\
& =\frac{1}{2}(R^2 - r^2)(R^2 +r^2 - 4n)\tilde{g} - \frac{1}{n} d(f +\gamma) \otimes d(f + \gamma) + \frac{1}{n} d\gamma \otimes d\gamma.
\end{align*}
In particular, taking $R^2 \geq 4n$, we deduce that $(\tilde{M}, \tilde{g}, e^{-f-\gamma})$ with $\alpha = -n$ has 
\[
\wt{\mathrm{Ric}}^\alpha_{f + \gamma} =\frac{1}{2}(R^2 - r^2)(R^2 +r^2 - 4n)\tilde{g}+ \frac{1}{n} d\gamma^2 \geq 0.
\]

\begin{remark}
The same holds for a Gaussian half-space: If we take $\tilde{M} = \mathbb{R}^{n+1}_+$ and $\rho$ to be the Euclidean distance from $\partial \tilde{M}$, then the computations above give that $(\tilde{M}, \tilde{g})$ is isometric to Hyperbolic space with $\wt{\Ric}^\alpha_{f+\gamma} = \frac{1}{n} d\gamma^2 \geq 0$. 
\end{remark}

\subsection{Differential inequalities for the distance to a hypersurface}

\begin{definition}
Given a domain $\Omega \subset M$ with smooth (open) boundary component $S' \subset \partial \Omega$, consider the partial Neumann problem
\begin{equation}\label{eq:bvp}\begin{cases} 
\Lap_f u = 0 &  \text{ in } \Omega \\ 
\langle \nabla u, \eta\rangle =0& \text{ on } S' \subset \partial \Omega,
 \end{cases} \end{equation}
 where $\eta$ is the inward-pointing unit normal on $S' \subset \pr \Omega$. 

Given a continuous function $u$ on $\overline{\Omega}$,
\begin{enumerate}
\item[(i)] at $x \in \overline{\Omega}$, we say $u$ satisfies $\Delta_f u(x) \leq 0$ \textit{in the viscosity sense} if, for any smooth function $\phi$ defined in a neighborhood $V$ of $x$ satisfying $\inf_{V \cap \overline{\Omega}}(u -\phi) = u(x)- \phi(x) = 0$, we have $\Delta_f \phi(x) \leq 0$;
\item[(ii)] at $x \in S'\subset \partial \Omega$, we say  $u$ satisfies $\langle \nabla u, \eta\rangle \leq 0$ \textit{in the viscosity sense} if for any smooth function $\phi$ defined in a neighborhood $V$ of $x$ satisfying $\inf_{V \cap \overline{\Omega}}(u -\phi) = u(x) - \phi(x) = 0$, we have $\langle \nabla \phi(x), \eta(x) \rangle \leq 0$. 
\end{enumerate}
If both (i) and (ii) hold for all $x \in \Omega \cap S'$, we say $u$ is a \textit{viscosity supersolution} of the partial Neumann problem \eqref{eq:bvp}.
\end{definition} 
 
It is known that viscosity supersolutions with Neumann boundary data have a strong maximum principle that holds up to the boundary: If $u$ is a viscosity supersolution as above and $\inf_{\overline{\Omega}} u = u(x)$ is attained at some $x \in \Omega \cup S'$, then $u$ is constant. The setting we consider is quite simple (i.e. linear), but we refer the reader to Theorems 1 and 2 in \cite{KK98}, or Remark 3.2 in the earlier work by Trudinger \cite{Tru88} for quite general results. 

The main result of this section is the following differential inequality for the distance function to a hypersurface. 

\begin{theorem}\label{thm:d-super}
Let $(M, g, e^{-f})$ be a complete $(n+1)$-dimensional manifold satisfying $\Ric^\alpha_f \geq 0 $ where $1 + \frac{n}{\alpha} \geq 0$. Suppose $N$ is a smooth geodesically convex domain in $M$ with (possibly empty) smooth boundary. 

Let $\Sigma$ be a complete, properly embedded, 2-sided, $f$-minimal hypersurface with free boundary in $(N, \pr N)$, and consider the distance function $d_\Sigma: N \to \mathbb{R}$ from $\Sigma$. Then $d_\Sigma$ is a viscosity supersolution of (18) with $\Omega = N \setminus \Sigma$ and $S' = \partial N \setminus \Sigma$.
\end{theorem}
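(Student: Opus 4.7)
The plan is to verify the two viscosity conditions separately: the interior inequality $\Delta_f d_\Sigma \leq 0$ on $\Omega = N \setminus \Sigma$, and the boundary Neumann condition $\langle \nabla d_\Sigma, \eta\rangle \leq 0$ on $S' = \partial N \setminus \Sigma$. In both cases I fix a smooth test function $\phi$ satisfying $\phi \leq d_\Sigma$ on a neighborhood $V \cap \overline{\Omega}$ of a point $x$, with $\phi(x) = d_\Sigma(x)$; pick a nearest point $p \in \overline{\Sigma}$ realizing the distance (existence by completeness of $M$ and properness of $\Sigma$); and let $\gamma : [0, L] \to \overline{N}$ be a unit-speed minimizing geodesic from $p$ to $x$. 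The first variation of length, together with the free boundary condition of $\Sigma$ along $S$ when $p \in \partial \Sigma$, forces $\gamma$ to leave $\Sigma$ orthogonally at $p$.

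For the interior inequality, I would pass to a smooth majorant by restricting to a small piece $\Sigma' \subset \Sigma$ containing $p$, shrunk if necessary so that $\tilde d(y) := d(\Sigma', y)$ is smooth in a neighborhood of $x$ (the normal exponential map of $\Sigma'$ being a local diffeomorphism onto that neighborhood). Since $\Sigma' \subset \Sigma$ and $p \in \Sigma'$, one has $\phi \leq d_\Sigma \leq \tilde d$ with equality at $x$, so the touching-from-above principle reduces the problem to $\Delta_f \tilde d(x) \leq 0$. A Bochner identity for $|\nabla \tilde d|^2 \equiv 1$ in the weighted setting (equivalently, the second variation of length summed over the parallel transports along $\gamma$ of an orthonormal basis of $T_p \Sigma$, whose $s = 0$ boundary term vanishes by $f$-minimality of $\Sigma$) yields
\begin{equation}
\partial_s (\Delta_f \tilde d) = -|\Hess \tilde d|^2 - \Ric^\alpha_f(\partial_s, \partial_s) - \tfrac{1}{\alpha}(\partial_s f)^2,
\end{equation}
where I used $\Ric_\infty = \Ric^\alpha_f + \tfrac{1}{\alpha}\, df \otimes df$. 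Writing $h(s) := (\Delta_f \tilde d) \circ \gamma$ and $w(s) := (\partial_s f) \circ \gamma$, the Cauchy--Schwarz bound $|\Hess \tilde d|^2 \geq \tfrac{1}{n}(\Delta \tilde d)^2 = \tfrac{1}{n}(h + w)^2$ (valid because $|\nabla \tilde d| \equiv 1$ kills one Hessian eigenvalue) together with $\Ric^\alpha_f \geq 0$ gives the Riccati-type inequality
\begin{equation}
h'(s) + \tfrac{2w}{n}\, h + \tfrac{1}{n}\, h^2 \leq -\tfrac{\alpha + n}{n\alpha}\, w^2 \leq 0,
\end{equation}
where the final sign uses the hypothesis $1 + n/\alpha \geq 0$ (equivalently, $\alpha$ and $\alpha + n$ share a sign). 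Because $\Sigma$ is $f$-minimal, $h(0) = 0$; the integrating factor $e^{B(s)}$ with $B(s) = \int_0^s \tfrac{2w(t)}{n}\, dt$ converts the Riccati inequality into $(e^B h)' \leq -\tfrac{1}{n} e^{-B} (e^B h)^2 \leq 0$ with initial value zero, which forces $e^B h \leq 0$ on $[0, L]$, hence $\Delta_f \tilde d(x) = h(L) \leq 0$.

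For the boundary condition at $x \in S'$, geodesic convexity of $N$ keeps the minimizing geodesic $\gamma$ inside $\overline{N}$, so its terminal tangent $\gamma'(L)$ cannot strictly enter $N$, giving $\langle \gamma'(L), \eta \rangle \leq 0$ for the inward normal $\eta$. Retaining the smooth majorant $\tilde d$ (or, alternatively, $\rho_p(y) := d(p, y)$, smooth near $x$ away from the cut locus of $p$), the touching inequality $\phi \leq \tilde d$ with equality at $x$ gives $\langle \nabla \phi(x), \eta\rangle \leq \langle \nabla \tilde d(x), \eta\rangle = \langle \gamma'(L), \eta\rangle \leq 0$. The main anticipated subtlety is the degenerate critical case $\alpha = -n$, where the coefficient $\tfrac{\alpha + n}{n\alpha}$ vanishes and the Riccati closure sits on the knife-edge; fortunately only non-positivity of this coefficient is used. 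Secondary technical care is required to guarantee smoothness of $\tilde d$ near $x$ (handled by shrinking $\Sigma'$ and a standard cut-locus/focal-point approximation) and to treat the edge case $p \in \partial \Sigma \cap \partial N$, where the orthogonal free boundary contact of $\Sigma$ with $\partial N$ and the geodesic convexity of $N$ still furnish a well-defined minimizing $\gamma \subset \overline{N}$.
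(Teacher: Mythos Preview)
Your argument is sound in outline but takes a genuinely different route from the paper's. The paper argues directly with the second variation of \emph{energy}: for a test function $\phi$ touching $d_\Sigma$ from below at $q$, it builds variations $\gamma^i_s$ of the minimizing geodesic with velocity $J(t)\,e_i(t)$, applies the second derivative test to the inequality $E(\gamma^i_s) \geq \tfrac{1}{2}\phi(\gamma^i_s(1))^2$, sums over $i$, and then chooses $J(t) = \exp\bigl(\tfrac{1}{n}(f(\gamma(t)) - f(q))\bigr)$ to make the right-hand side nonpositive. Your Riccati computation via the smooth majorant $\tilde d$ is essentially the drift-Bochner alternative the paper sketches in the remark immediately following the theorem (where it is flagged as valid at \emph{smooth} points of $d_\Sigma$). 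The two are algebraically the same --- your integrating factor $e^B$ is the paper's $J^2$ up to a multiplicative constant --- but they differ in how nonsmoothness of $d_\Sigma$ is handled.

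On that point, one step deserves more care. Shrinking $\Sigma'$ removes cut-locus non-smoothness (multiple nearest points), but it does \emph{not} remove focal-point non-smoothness: the focal distance along $\gamma$ is determined by Jacobi fields with initial data coming from the second fundamental form of $\Sigma$ at $p$, and neither changes when $\Sigma'$ is shrunk. If $x$ lies exactly at the first focal point, $\tilde d$ is not $C^2$ there and the comparison $\Delta_f\phi(x)\le \Delta_f\tilde d(x)$ is unavailable. This can be patched with a genuine support-function argument (e.g.\ run the Riccati inequality on $[0,L-\epsilon]$ where $\tilde d$ is smooth, construct a $C^2$ upper barrier through $\gamma(L-\epsilon)$ with $f$-mean curvature $\le 0$, and send $\epsilon\to 0$), but this is more than the shrinking step you describe. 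The paper's second-variation approach is chosen precisely to sidestep this issue: it never needs a smooth majorant, since the inequality for $\Delta_f\phi(q)$ is derived directly from the variations and the test function $\phi$ itself.
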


\begin{proof}
The proof is an application of Synge's first and second variation formulae. Given a curve $\gamma:[0,1]\to M$, in what follows let $E(\gamma) := \int_0^1 |\gamma'|^2\,dt$ denote the energy and $L(\gamma) = \int_0^1 |\gamma'| \, dt$ denote the length. 

We first consider the case of interior points. Let $q \in N \setminus \Sigma$ be an interior point and suppose that $\phi$ is a $C^2$ function defined in a neighborhood of $q$ which touches $d_{\Sigma}$ from below. We must show that $\Delta_f \phi(q) \leq 0$. To that end, let $p \in \overline{\Sigma}$ be a point such that $d(p, q) = d_{\Sigma}(q)$. 

Let $\gamma : [0, 1] \to M$ minimizing geodesic from $p$ to $q$.  By assumption, the geodesic $\gamma$ remains in the interior of $N$.  Choose an orthonormal basis of vectors $e_0, \dots, e_n \in T_pM$ such that $e_0$ is parallel to $\gamma'(0)$. Because $\Sigma$ contacts $\partial N$ orthogonally, distance minimization implies $\gamma'(0) \in T_p^\perp \Sigma$ and hence $e_1, \dots, e_n \in T_p \Sigma$. Indeed, this is a well-known consequence of the first variation formula when $p \in \Sigma \setminus \partial \Sigma$ is interior. If $p \in \partial \Sigma \subset \partial N$ lies on the boundary, then the same variational argument implies $\gamma'(0)$ must be orthogonal to the space $T_p \Sigma \cap T_p \partial N$ and satisfy $\langle \gamma'(0), \eta_{\partial \Sigma}(p)\rangle \leq 0$ where $\eta_{\partial \Sigma}$ is the inward-pointing conormal to $\partial \Sigma$ in $\Sigma$. On the other hand, by geodesic convexity $\gamma'(0)$  must point into $N$ so $\langle \gamma'(0), \eta(p) \rangle \geq 0$. But the orthogonality condition gives precisely $\eta_{\partial \Sigma} = \eta$, so $\gamma'(0) \perp \eta_{\partial \Sigma}(p)$ and hence $\gamma'(0) \in T^\perp_p\Sigma$ in this case too. 

For simplicity, let $e_i = e_i(t)$ also denote the parallel extension of $e_i$ along $\gamma(t)$. For any smooth variation of $\gamma_s$ of $\gamma$, recall the first and second variations of energy are given by 
\begin{align*}
\frac{d}{ds} E(\gamma_s) &= \langle V, \gamma' \rangle \big|_{t = 0}^{t = 1},&
\frac{d^2}{ds^2} E(\gamma_s) &= \langle X, \gamma'\rangle \Big|_{t = 0}^{t = 1} + \int_0^1 |\nabla_{\gamma'} V |^2 - R(V, \gamma', V, \gamma') \, dt,
\end{align*}
where $V(t) := \partial_s \gamma_s(t) \big|_{s = 0}$ and $X(t) := \nabla_{(\partial_s \gamma_s)} (\partial_s \gamma_s) (t) \big|_{ s= 0}$ are the velocity and acceleration of the variation. Additionally, if the variation satisfies $s \mapsto \gamma_s(0) \in \Sigma$, then one has the chain of inequalities $E(\gamma_s) - \frac{1}{2} \phi(\gamma_s(1))^2 \geq \frac{1}{2} L(\gamma_s)^2 - \frac{1}{2} d_{\Sigma}(\gamma_s(1))^2 \geq 0$ with equality when $s = 0$. We now make suitable choices of $\gamma_s$ and apply the first and second derivative tests. 

First, given a smooth function $J : [0, 1] \to \mathbb{R}$ satisfying $J(1) = 1$, we can construct variations $\gamma^i_s(t)$ for $i=1, \dots, n$ that satisfy\footnote{For instance, using the inverse function theorem, we could perturb the variation $\tilde{\gamma}^i_s(t) = \exp_{\gamma(t)}(s J(t) e_i(t))$ near $t = 0$ to satisfy the first condition without affecting the other two. }
\begin{itemize}
\item $\gamma^i_s(0) \in \Sigma$, 
\item $V^i(t) := \partial_s \gamma_s^i(t) \big|_{s = 0} = J(t) e_i(t)$, 
\item $X^i(1) := \nabla_{(\partial_s \gamma^i_s)} (\partial_s \gamma^i_s) (1) \big|_{ s= 0} = 0$.
\end{itemize}
The first and second derivative tests applied to inequality $E(\gamma^i_s) \geq \frac{1}{2} \phi(\gamma^i_s(1))^2$ at $s = 0$ yield $\phi(q) \langle \nabla \phi(q), e_i \rangle = 0$ and, noting that $|\gamma'| = d_{\Sigma}(q) = \phi(q)$, 
\begin{align*}
\phi(q) (\nabla^2 \phi)_q(e_i, e_i) &\leq \langle X^i, \gamma' \rangle \Big|_{t=0}^{t=1} + \int_0^1 |\nabla_{\gamma'} V^i |^2 - R(V^i, \gamma', V^i, \gamma') \, dt \\
& = -J(0)^2 \langle A_{\Sigma, p} (e_i, e_i), \gamma'(0) \rangle + \int_0^1 (J')^2 - J^2 R(\gamma', e_i, \gamma', e_i) \, dt. 
\end{align*}
On the other hand, by considering the variation $\gamma^0_s(t) = \exp_{\gamma(t)}(s t\, e_0(t))$, we analogously deduce that $\nabla \phi(q) = \gamma'(1)$ and 
\[
\phi(q) (\nabla^2 \phi)_q(e_0, e_0)  \leq 0. 
\]
Summing over $i$, we obtain 
\[
\phi(q) \Delta \phi(q) \leq - J(0)^2 \langle \vec{H}_{\Sigma}(p), \gamma'(0) \rangle  + \int_0^1 n (J')^2 - J^2 \mathrm{Ric}(\gamma', \gamma') \, dt. 
\]

Next, observe that 
\begin{align*}
- \langle \nabla \phi(q), \nabla f(q) \rangle + J(0)^2\langle \nabla f(p), \gamma'(0) \rangle &= -\int_0^1 \frac{d}{dt}\big(J^2 \langle \nabla f \circ \gamma ,\gamma' \rangle \big)\, dt \\
& = - \int_0^1 2J J' \langle \nabla f \circ \gamma, \gamma' \rangle +  J^2 (\nabla^2 f)(\gamma', \gamma') \, dt.
\end{align*}
Adding this to inequality above, and using the assumptions $\vec{H}_{\Sigma, f}(p) := \vec{H}_{\Sigma}(p)+  \nabla^\perp f(p) = 0$, we obtain 
\begin{align*}
\phi(q) \Delta_f \phi(q) &\leq \int_0^1 n (J')^2 - 2J J' \langle \nabla f \circ \gamma, \gamma' \rangle -  J^2 \mathrm{Ric}_f(\gamma', \gamma')  \, dt \\
 &= \int_0^1 n (J')^2 - 2J J' \langle \nabla f \circ \gamma, \gamma' \rangle - J^2 \frac{1}{\alpha}\langle \nabla f \circ \gamma, \gamma' \rangle^2 \, dt  - \int_0^1 J^2 \mathrm{Ric}_f^\alpha(\gamma', \gamma') \, dt. 
\end{align*}

Finally, take $J(t) := \exp\big(\frac{1}{n}(f(\gamma(t)) - f(q))\big)$ so that $J' = \frac{1}{n} \langle \nabla f \circ \gamma, \gamma' \rangle J$. Note that $J(1) = 1$. Using that $1 + \frac{n}{\alpha} \geq 0$ and $\mathrm{Ric}_f^\alpha \geq 0$, we conclude
\begin{equation}\label{eq:2pt-conclusion}
\phi(q) \Delta_f \phi(q) \leq -n\int_0^1 \Big(1 + \frac{n}{\alpha}\Big) (J')^2\, dt - \int_0^1 J^2 \mathrm{Ric}^\alpha_f(\gamma', \gamma') \, dt \leq 0.
\end{equation}
Since $q \in N \setminus \Sigma$ and $\phi$ were arbitrary, this completes the proof. 

Now we consider boundary points.
Let $q \in \pr N \setminus\Sigma$ be a boundary point and suppose that $\phi$ is a $C^2$ function defined in a neighborhood of $q$ which touches $d_{\Sigma}$ from below. Again, let $p \in \overline{\Sigma}$ be a point such that $d(p, q) = d_{\Sigma}(q)$. Then $\phi(x) \leq d_\Sigma(x) \leq d(p,x)$ for $x$ near $q$ with equality when $x = q$, hence $\langle \nabla \phi(x), \eta(x) \rangle \leq \langle \nabla d_p(x), \eta\rangle \leq 0$ by convexity (again, $\eta$ is the inward normal). 
\end{proof}

\begin{remark}
In the setting of the previous theorem, the inequality $\Delta_f d_{\Sigma}\leq 0$ at a \textit{smooth} point of $d_{\Sigma}$ can also be proved using the drift Bochner formula. Recall the drift Bochner formula is
\begin{equation}\label{eq:drift-bochner} 
\frac{1}{2}\Lap_f |\nabla u|^2 = |\nabla^2 u|^2 + \Ric_f(\nabla u,\nabla u) + g(\nabla \Lap_f u, \nabla u).
\end{equation}
If $\gamma(t)$ is an integral curve of $\nabla d_{\Sigma}$ (the free boundary assumption and geodesic convexity ensure that this curve remains in $N$), then $m_f(t) := \Delta_f d_{\Sigma}(\gamma(t))$ is the $f$-mean curvature of the level set $\{d_{\Sigma} = t\}$. Taking $u = d_{\Sigma}$ above and using Cauchy-Schwarz, one can show 
\[
m_f'(t) \leq  - \frac{1}{n}\Big(1+ \frac{n}{\alpha}\Big) m_f(t)^2 -\frac{2}{n} f'(t) m_f(t)- \mathrm{Ric}^\alpha_f(\gamma'(t), \gamma'(t)),
\]
when $1 + \frac{n}{\alpha} \geq 0$ (here $f'(t) = \frac{d}{dt}f(\gamma(t))$). In particular, when $\mathrm{Ric}^\alpha_f \geq 0$ and $m_f(0) = 0$ (using the integrating factor $\e^{-2f(t)/n}$) the differential inequality yields $m_f(t) \leq 0$ as well. 
\end{remark}

\begin{proposition}\label{prop:d-maximum-principle}

Let $(M, g, e^{-f})$ be a complete $(n+1)$-dimensional manifold satisfying $\Ric^\alpha_f \geq \kappa g$ for $1 + \frac{n}{\alpha} \geq 0$ and $\kappa \geq 0$. Suppose $N$ is a smooth geodesically convex domain in $M$ with (possibly empty) smooth boundary.  

Suppose $\Sigma_0,\Sigma_1$ are complete, properly embedded, 2-sided, $f$-minimal hypersurfaces with free boundary in $(N, \pr N)$. Suppose that $\Sigma_0,\Sigma_1$ are disjoint and let $\Omega$ denote the region in-between. Consider the distance functions $d_i = d(\cdot,\Sigma_i)$ on $\Omega$. 

If $\kappa>0$, then $d_0+d_1$ cannot achieve a minimum on $\overline{\Omega}$.

If $\kappa=0$ and $1 +\frac{n}{\alpha}>0$, and $d_0 +d_1$ does achieve a minimum on $\overline{\Omega}$, then $u=d_0+d_1$ is constant, the $\Sigma_i$ are totally geodesic and $\Omega$ is isometric to $\Sigma_i \times [0,d]$.

If $\kappa=0$ and $1+\frac{n}{\alpha} =0$, and $d_0+d_1$ achieves a minimum on $\overline{\Omega}$, then $u=d_0+d_1$ is constant and the $\Sigma_i$ are totally geodesic with respect to the conformal metric $e^{-\frac{2}{n}f}g$. 
\end{proposition}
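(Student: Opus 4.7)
My approach combines the viscosity supersolution property established in Theorem \ref{thm:d-super} with the strong maximum principle for viscosity supersolutions under Neumann boundary data, followed by a rigidity analysis of the underlying two-point variation.

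First, applying Theorem \ref{thm:d-super} to $\Sigma_0$ and $\Sigma_1$ separately, both $d_0$ and $d_1$ are viscosity supersolutions of the partial Neumann problem \eqref{eq:bvp} on $\Omega := N \setminus (\Sigma_0 \cup \Sigma_1)$ with Neumann boundary $S' := \partial N \setminus (\Sigma_0 \cup \Sigma_1)$. Since $\Delta_f$ is linear, $u := d_0 + d_1$ is also a viscosity supersolution. If $u$ attains its minimum on $\overline{\Omega}$, the strong maximum principle for viscosity supersolutions with Neumann data (\cite{KK98}; cf.\ \cite{Tru88}) forces $u \equiv u_0 := d(\Sigma_0, \Sigma_1) > 0$ to be constant on $\overline{\Omega}$.

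Next, I would extract rigidity. For any $q \in \Omega$, the saturation $d_0(q)+d_1(q) = d(\Sigma_0,\Sigma_1)$ forces the minimizing geodesics from $q$ to $p_i \in \Sigma_i$ ($i=0,1$) to concatenate into a single minimizing segment $\gamma:[0,u_0]\to\overline{\Omega}$ with $\gamma(0)=p_0$, $\gamma(u_0)=p_1$. The Synge two-point variation underlying Theorem \ref{thm:d-super}, carried out along this full segment with the extremal choice $J(t)=\exp\bigl((f(\gamma(t))-f(q))/n\bigr)$, yields an inequality of the form $0 = \Delta_f u(q) \leq -\int_0^{u_0} J^2\bigl[(1/n+1/\alpha)(f')^2 + \Ric^\alpha_f(\gamma',\gamma')\bigr]\,dt \leq 0$, so equality holds throughout. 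In case $\kappa>0$, the curvature contribution is at most $-\kappa \int_0^{u_0} J^2 |\gamma'|^2\,dt < 0$, giving the required contradiction. In the case $\kappa=0$, $1+n/\alpha>0$, equality forces $J' \equiv 0$ along $\gamma$ (i.e., $f$ is constant along $\gamma$) and $\Ric^\alpha_f(\gamma',\gamma')\equiv 0$; equality in the Synge formula further forces each parallel field $e_j$ to generate a Jacobi variation (so the relevant sectional curvatures vanish) and the boundary contributions $\langle A_{\Sigma_i}(e_j,e_j),\gamma'(0)\rangle$ to vanish. Combined with $f$-minimality and the fact that $f$ is constant along $\gamma$, this upgrades to $A_{\Sigma_i}\equiv 0$, and varying $q$ gives the isometric product $\overline{\Omega}\cong\Sigma_0\times[0,u_0]$.

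For the critical case $\kappa=0$, $\alpha=-n$, the $(J')^2$ coefficient vanishes and no restriction on $f$ is extracted. Here I would instead pass to the conformal metric $\bar g := e^{-2f/n}g$: with $\phi=-f/n$, the standard conformal formulas $\bar H_\Sigma = e^{f/n}(H_\Sigma + \partial_\nu f)$ and $\bar A_\Sigma = e^{-f/n}\bigl(A_\Sigma + \tfrac{1}{n}\partial_\nu f \cdot g_\Sigma\bigr)$ show that $f$-minimality in $g$ is equivalent to minimality in $\bar g$, and the residual rigidity (applied in $\bar g$, where one still has $\overline{\Ric}^\alpha_f = 0$ along $\gamma$) forces the umbilic relation $A_{\Sigma_i} = -\tfrac{1}{n}\partial_{\nu_i} f \cdot g_{\Sigma_i}$, equivalently $\bar A_{\Sigma_i} = 0$, which is precisely the claimed conformal total geodesicity. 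The main obstacle throughout is the rigidity analysis: one must trace carefully through every inequality in the derivation of Theorem \ref{thm:d-super} and identify the geometric consequence of equality, with the borderline $\alpha = -n$ case requiring the conformal change because the gradient term no longer controls $f$. The Neumann boundary along $S'$ is already incorporated into Theorem \ref{thm:d-super}, so the free boundary poses no additional difficulty.
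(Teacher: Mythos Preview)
Your first step (strong maximum principle forces $u$ constant) matches the paper, but the rigidity analysis diverges. The paper does not run a two-endpoint Synge variation; instead it observes that once $u=d_0+d_1$ is constant, each $d_i$ is simultaneously a viscosity super- and sub-solution of $\Delta_f d_i=0$ (since $d_0 = u_0 - d_1$), hence smooth and genuinely $f$-harmonic. Rigidity then comes from the \emph{drift Bochner formula} applied to the smooth function $d_i$: it yields $|\nabla^2 d_i|^2 \leq -\tfrac{1}{\alpha}\,g(\nabla f,\nabla d_i)^2$. When $1+n/\alpha>0$, saturation of (\ref{eq:2pt-conclusion}) gives $g(\nabla f,\nabla d_i)=0$, so $\nabla^2 d_i=0$ and the product splitting follows. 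In the critical case $\alpha=-n$, the Bochner inequality at $d_i\to 0$ gives $|A_{\Sigma_i}|^2 \leq \tfrac{1}{n}(\partial_\nu f)^2$, while Cauchy--Schwarz and $f$-minimality give the reverse inequality $|A_{\Sigma_i}|^2 \geq \tfrac{1}{n}H_{\Sigma_i}^2 = \tfrac{1}{n}(\partial_\nu f)^2$; equality forces umbilicity, which is exactly $\bar A_{\Sigma_i}=0$.

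Your Synge/index-form route is a legitimate alternative for the first two cases (though ``equality forces each $e_j$ to be Jacobi and the boundary terms to vanish'' requires the standard nullity-of-the-index-form argument, not just the diagonal equalities). The gap is in your handling of $\alpha=-n$: passing to $\bar g = e^{-2f/n}g$ does make the $\Sigma_i$ minimal, but you have no control on $\overline{\Ric}$ or any Bakry--\'Emery variant in $\bar g$, so there is no ``residual rigidity applied in $\bar g$'' to invoke. If you want to stay with your variational approach, the fix is to remain in $g$: the index-form nullity for $V=Je_i$ (with $J=e^{f/n}$) yields the natural boundary condition $h_0(e_i,\cdot)=\tfrac{J'(0)}{LJ(0)}\langle e_i,\cdot\rangle=\tfrac{1}{n}\partial_{\nu_0}f\,\langle e_i,\cdot\rangle$, which is precisely the umbilic relation $\bar A_{\Sigma_0}=0$. (A minor point: you should also note, as the paper does, that a minimum attained only on some $\Sigma_i$ is also attained at an interior point along the realising geodesic, before the Neumann strong maximum principle applies.)
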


\begin{proof}
Suppose $u = d_0+d_1$ achieves a minimum on $\overline{\Omega}$. If the minimum is achieved on either $\Sigma_i$, then it is also achieved on a minimising geodesic connecting the $\Sigma_i$. In any case, there is an interior minimum. By Theorem \ref{thm:d-super}, $u$ is a (viscosity) supersolution of \eqref{eq:bvp}, so the strong maximum principle implies that $u$ is constant. But each $d_i$ was itself a (viscosity) supersolution of \eqref{eq:bvp}, so it follows that
\begin{equation}
\label{eq:f-harmonic}
\Lap_f d_i =0;
\end{equation} in particular each $d_i$ is a smooth function on $\Omega$, and \eqref{eq:f-harmonic} holds in the \textit{classical} sense. 

If $\kappa>0$, then (\ref{eq:2pt-conclusion}) would give $\Lap_f d_i < 0$, which is already a contradiction. So in this case no minimum is possible and we henceforth assume $\kappa = 0$. 

If $1+ \frac{n}{\alpha} >0$ and a minimum is attained, then (\ref{eq:2pt-conclusion}) implies that for every $x \in \Omega$ either $\Lap_f d_i (x) < 0$ or else $g(\nabla f,\nabla d_i) \equiv 0$ along the minimising geodesic connecting $x$ to $\Sigma_i$. Since $\Delta_f d_i = 0$, we must be in the latter case, and so we conclude $g(\nabla f, \nabla d_i) \equiv 0$ everywhere. On the other hand (as $d_i$ is smooth) the drift Bochner formula (\ref{eq:drift-bochner}) applies to give
\begin{equation}\label{eq:drift-bochner-degen} 
|\nabla^2 d_i|^2 = -\Ric_f(\nabla d_i, \nabla d_i) \leq  -\frac{1}{\alpha}g(\nabla f, \nabla d_i)^2.
\end{equation}
In particular, if $1+ \frac{n}{\alpha} >0$ and a minimum is attained, then $\nabla^2 d_i=0$. Hence the $\Sigma_i$ are totally geodesic, and $\Omega$ is isometric to $\Sigma_i \times [0,d]$. 

For the only remaining case, we have $\kappa=0$ and $\alpha =-n$. Continuing with assumption that that a minimum is obtained, we take the limit of (\ref{eq:drift-bochner-degen}) as $d_i\to 0$ to find that $|A_{\Sigma_i}|^2 \leq \frac{1}{n} g(\nabla f,\nu)^2$. On the other hand, by Cauchy-Schwarz we have $|A_{\Sigma_i}|^2 \geq \frac{1}{n} H_{\Sigma_i}^2 = \frac{1}{n} g(\nabla f,\nu)^2$. Thus 
\[
|A_{\Sigma_i}|^2 = \frac{1}{n} g(\nabla f,\nu)^2.
\]
This implies that each $\Sigma_i$ is totally geodesic in the conformal metric $\tilde{g} = e^{-\frac{2}{n} f} g$. Indeed, following Appendix \ref{sec:conformal-change}, the transformation law for the second fundamental form gives 
\[
|\wt{A}_{\Sigma_i}|_{\tilde{g}}^2 = e^{\frac{2}{n}f} \big(|A_{\Sigma_i}|_g^2 + \frac{2}{n} g(\nabla f, \nu) H_{\Sigma_i} + \frac{1}{n} g(\nabla f, \nu)^2\big) = e^{\frac{2}{n}f} \big(|A_{\Sigma_i}|_g^2- \frac{1}{n} g(\nabla f, \nu)^2+ H_{\Sigma_i, f}\big) = 0.
\]

\end{proof}

\subsection{Length variation proof of the Frankel properties in space forms}

We end by giving the alternative proofs of the half-space Frankel property in settings (I) and (II) from Section \ref{sec:prelims}, using the results of the previous section. 

\begin{proof}[Alternative proof of Theorems \ref{thm:hemisphere-more-general} and \ref{thm:free-boundary-more-general}]

Let $(M, g)$ be one of the $(n+1)$-dimensional simply connected space forms of constant curvature $-1, 0,$ or $1$. Let $M' \subset M$ be any complete totally geodesic hypersurface and let $\tilde{M}$ be one of the connected components of $M \setminus M'$ as above. When $R \in (0, \mathrm{diam}(M))$, define $N = B^{n+1}_{R, +}:= B^{n+1}_R \cap \tilde{M}$, where $B^{n+1}_R$ has centre in $M'$. When $M = \mathbb{S}^{n+1}$ and $R \geq \pi$, set $N = \mathbb{S}^{n+1}_+$. Define $D = \overline{N} \cap M'$ and $S = \partial N \setminus D$. 

In any of these settings, suppose $\Sigma_i \subset \overline{N} \setminus D$ for $i = 0, 1$ are connected, properly embedded, $S$ free boundary minimal hypersurfaces. Recall our convention that $\partial_S \Sigma_i \subset \Sigma_i$. Suppose for sake of contradiction that $\overline{\Sigma}_0 \cap \overline{\Sigma}_1 = \emptyset$. In particular, $d(\Sigma_0, \Sigma_1) > 0$. Let $\Omega$ denote the open set in $N$ between $\Sigma_0$ and $\Sigma_1$ and let us fix a reference point $o' \in \Omega$. 

To obtain a contradiction, we consider the conformal metric on the half-space $\tilde{M}$. As before, let $\rho : \tilde{M} \to (0, \infty)$ denote the distance function to $M'$ and define a metric $\tilde{g} := (\sn\rho)^{-2}g$ and a weight $e^{-f} := (\sn\rho)^n$ on $\tilde{M}$. Let $\tilde{d}$ denote the distance function on $\tilde{M}$ with respect to the conformal metric $\tilde{g}$. By Lemma \ref{lem:conformal-change}, $(\tilde{M}, \tilde{g})$ is isometric to hyperbolic space, and the boundary of $N$ in $\tilde{M}$ (which now consists only of $S$) is totally geodesic with respect to $\tilde{g}$. In particular, the domain $N$ is geodesically convex. The hypersurfaces $\Sigma_0, \Sigma_1$ are complete and still meet the boundary $S$ orthogonally. Again, by choice of weight, the $\Sigma_i$ are $f$-minimal hypersurfaces in $(\tilde{M}, \tilde{g}, e^{-f})$.

Consider the function $\tilde{u} := \tilde{d}_{\Sigma_0} + \tilde{d}_{\Sigma_1}$ on the domain $\tilde{\Omega} := \overline{\Omega} \cap \tilde{M}$ which is the closure of $\Omega$ in $\tilde{M}$. Our assumption that the $M$-closures $\overline{\Sigma}_i$ are disjoint implies that $\tilde{u}(x) \to \infty$ as $\tilde{d}(o', x) \to \infty$. Indeed, suppose $x_i$ is a sequence of points in $\Omega$ such that $x_i \to \bar{x} \in \partial \Omega \cap \partial \tilde{M} \subset M'$. By assumption $\min\{d_{\Sigma_0}(\bar{x}), d_{\Sigma_1}(\bar{x})\} > 0$. Supposing without loss of generality  $d_{\Sigma_0}(\bar{x}) > 0$, given any point $\bar{y} \in \Sigma_0$ and any $g$-unit-speed curve $\gamma$ between $\bar{y}$ and $\bar{x}$, one has 
\[
L(\gamma, \tilde{g}) \geq \int_0^{d(\bar{x}, \bar{y})} (\sn\rho(\gamma(t)))^{-1} \, dt \geq \int_0^{\rho(\bar{y})} (\sn \rho)^{-1} d\rho = \infty.
\]

Thus $\tilde{u} : \tilde{\Omega} \to (0, \infty)$ is a continuous function with a positive lower bound and tends to infinity at infinity. Therefore $\tilde{u}$ must attain its minimum somewhere on $\tilde{\Omega}$. We now apply Proposition \ref{prop:d-maximum-principle} (to $(\tilde{M}, \tilde{g}, e^{-f})$), which implies that both $\Sigma_0$ and $\Sigma_1$ must be totally geodesic with respect to $e^{-\frac{2}{n} f}\tilde{g} = g$. But then (by classification of totally geodesic hypersurfaces in the space form $(M,g)$), we must have $\bar{\Sigma}_0 \cap \bar{\Sigma}_1 \neq \emptyset$. This is the desired conclusion, so we conclude that we must have had $\bar{\Sigma}_0 \cap \bar{\Sigma}_1 \neq \emptyset$ in the first place. 

\end{proof}

\begin{remark}
Using the results from Section \ref{sec:gaussian-conformal}, a very similar argument also gives an alternative proof of the improved Frankel property in setting (III) from Section 2 (i.e. Theorem \ref{thm:ball-shrinker-more-general}). 
\end{remark}

\appendix

\section{Mean convex deformations via the conformal group}
\label{sec:conformal-deformations} 

\begin{proposition} 
Suppose $M \in \{\mathbb{H}^{n+1}, \mathbb{R}^{n+1}, \mathbb{S}^{n+1} \}$ is a simply connected space form and $B^{n+1}_R \subset M$ is a geodesic ball of radius $R \in (0, \mathrm{diam}(M))$. Let $H_0 \subset B^{n+1}_R$ be a totally geodesic, free-boundary hypersurface.

Then for all $\epsilon > 0$ small, there exists a free boundary, strictly mean convex hypersurface $H^\epsilon_0$ lying in an $\epsilon$-neighborhood of $H_0$ satisfying $H_0 \cap H^\epsilon_0 = \emptyset$. In particular, $H^\epsilon_0$ can be chosen to lie in either component of $B^{n+1}_R \setminus H_0$ with non-vanishing mean curvature vector pointing away from $H_0$. 
\end{proposition}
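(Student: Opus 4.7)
The plan is to construct $H_0^\epsilon$ explicitly as umbilic hypersurfaces in a Euclidean conformal model of $\overline{B^{n+1}_R}$. In each space form, there is a standard conformal diffeomorphism $\Psi \colon \overline{B^{n+1}_R} \to \overline{\mathbb{B}^{n+1}}$ onto the closed Euclidean unit ball sending the center $o$ to the origin: a dilation for $M=\mathbb{R}^{n+1}$; stereographic projection from the antipode of $o$ composed with a dilation for $M=\mathbb{S}^{n+1}$; or the Poincar\'e ball model composed with a dilation for $M=\mathbb{H}^{n+1}$. Under $\Psi$, the space form metric becomes $\lambda^{-2} g_{\mathrm{eucl}}$ for an explicit, rotationally symmetric $\lambda\in C^\infty(\overline{\mathbb{B}^{n+1}})$. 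Since umbilicity of a hypersurface and orthogonal incidence with $\partial B^{n+1}_R$ are preserved by $\Psi$, it suffices to construct suitable umbilic hypersurfaces in $\overline{\mathbb{B}^{n+1}}$ meeting $\mathbb{S}^n$ orthogonally.

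By the classification of totally geodesic hypersurfaces in each space form together with the orthogonality condition, any totally geodesic free boundary hypersurface in $(B^{n+1}_R, g)$ must pass through $o$. Hence $\Psi(H_0)$ is an umbilic hypersurface through the origin meeting $\mathbb{S}^n$ orthogonally, and the only such hypersurface is a flat equatorial disk. Composing $\Psi$ with a rotation (which is an isometry of $g$ by rotational symmetry of $\lambda$), I may assume $\Psi(H_0) = \{x^0=0\} \cap \overline{\mathbb{B}^{n+1}}$. For each small $s>0$, I take the Euclidean sphere $S_s$ of radius $r_s = 1/s$ centered at $c_s e_0$ with $c_s=\sqrt{r_s^2+1}$; the relation $c_s^2=r_s^2+1$ is precisely the orthogonality condition with $\mathbb{S}^n$. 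As $s\to 0^+$, the cap $S_s\cap \overline{\mathbb{B}^{n+1}}$ lies in the half $\{x^0>0\}$ and converges smoothly to the equatorial disk. Setting $H_0^s := \Psi^{-1}(S_s\cap \overline{\mathbb{B}^{n+1}})$ then yields a smooth family of umbilic, free boundary hypersurfaces in $(B^{n+1}_R, g)$ approaching $H_0$.

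Since $S_s$ does not pass through the origin for $s>0$, and every totally geodesic free boundary hypersurface corresponds via $\Psi$ to an equatorial disk through the origin, $H_0^s$ is not totally geodesic in $g$. By the classical theorem that any connected umbilic hypersurface in an $(n+1)$-dimensional space form (with $n\geq 2$) has constant mean curvature, $H_0^s$ has constant nonzero mean curvature. The main obstacle is verifying that the mean curvature vector $\vec{H}_g$ points away from $H_0$ uniformly in all three space forms, since in the conformal transformation law
\begin{equation*}
\vec{H}_g = \lambda^2 \vec{H}_{\mathrm{eucl}} + n\lambda (\nabla\lambda)^{\perp_{\mathrm{eucl}}}
\end{equation*}
the two terms can have opposing sign (as occurs when $\lambda$ is decreasing, i.e.\ in the hyperbolic case). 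To check the sign, I evaluate at the ``top'' of the cap (where $x_\parallel = 0$) using the explicit formula for $\lambda$; direct computation shows that the constant scalar $\mu_g := H_g/n$ is strictly negative with respect to the outward Euclidean normal $\hat\nu_{\mathrm{out}} = -e_0$ in every case, and since $\nu_g = \lambda \hat\nu_{\mathrm{out}}$, the vector $\vec{H}_g = n\mu_g\lambda\hat\nu_{\mathrm{out}}$ points in the $+e_0$ direction, away from $H_0$. Reparametrising $s$ in terms of $\epsilon$ (the distance to $H_0$) and replacing $c_s e_0$ by $-c_s e_0$ handles the opposite component of $B^{n+1}_R\setminus H_0$ symmetrically.
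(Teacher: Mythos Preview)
Your construction coincides with the paper's: the paper applies the conformal automorphisms $\Phi_{(s,0,\dots,0)}$ of the ball to the equatorial disk, and the images of $\{x_0=0\}$ under such M\"obius maps are precisely the spherical caps meeting the boundary sphere orthogonally. The difference is in verifying strict mean convexity. The paper pulls back the space-form metric by $\Phi_s$ and computes the normal derivative of the conformal factor along $\{x_0=0\}$, obtaining $-\frac{4(1+\kappa\bar R^2)}{1+\kappa|x|^2}s + O(s^2)$, which is nonzero everywhere for small $s$ and works uniformly for all $n\geq 1$. Your route---umbilicity plus the classification of umbilic hypersurfaces in space forms---is conceptually clean and reduces the sign check to a single point, but it leaves two small gaps. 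First, the umbilicity argument for constant mean curvature requires $n\geq 2$; when $n=1$ every curve is trivially umbilic and you have not established that the geodesic curvature is nonvanishing along the whole arc (you would need either the direct computation at every point, or the observation that these caps correspond to distance circles/horocycles/equidistants in the $2$-dimensional space form, all of which have constant curvature). Second, the crucial sign check at the top of the cap is asserted (``direct computation shows'') but not carried out; it does go through---writing $\lambda(\rho)=\frac{1+\kappa\bar R^2\rho^2}{2\bar R}$ one finds $\lambda/r_s+\lambda'\sim \frac{1+\kappa\bar R^2}{2\bar R r_s}>0$ in all three cases---but this is exactly the content of the proof and should be displayed.
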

\begin{proof}
Fix a simply-connected space form $M$ with constant curvature $\kappa \in \{-1, 0, 1\}$. We model the geodesic ball $B^{n+1}_R$ by letting
\[
B := \{ (x_0, \dots, x_n) : x_0^2 + \cdots + x_n^2  < \bar{R}^2\}
\]
where $\bar{R} = \tn\frac{R}{2}$ and equipping $B$ with the constant curvature metric given by
\[
g_{ij} := \frac{4}{(1+ \kappa |x|^2)^2} \delta_{ij}. 
\]
Let $H_0= \{x_0 = 0\} \subset B$ denote a totally geodesic free boundary disk through the origin.  

We will obtain a free boundary mean-convex deformation of $H_0$ with respect to the metric $g$ by considering the image of $H_0$ under a conformal mapping of $B$. To that end, for $y \in B$, consider the conformal translation $\Phi_y : B \to B$ mapping the origin to $y$ given by
\[
\Phi_y(x) = \bar{R}^{2} \frac{(\bar{R}^2 + 2 \langle x, y\rangle + |x|^2) y + (\bar{R}^2 - |y|^2) x}{\bar{R}^4 + 2\bar{R}^2  \langle x, y \rangle +|x|^2 |y|^2}. 
\]
Note that $\Phi_y(0) = y$, $\Phi_y^{-1} = \Phi_{-y}$, 
\[
|\Phi_y(x)|^2 = \bar{R}^2 \frac{ \bar{R}^2 |x + y|^2}{\bar{R}^4 + 2 \bar{R}^2 \langle x, y\rangle +|x|^2 |y|^2 }, \qquad 
(\Phi_y)^\ast \delta_{ij} =\Big(\frac{\bar{R}^2(\bar{R}^2 - |y|^2)}{\bar{R}^4 + 2 \bar{R}^2 \langle x, y\rangle + |x|^2 |y|^2} \Big)^2 \delta_{ij}. 
\]
Set $H^s_0 := \Phi_s(H_0)$ for $\Phi_s = \Phi_{y}$, $y = (s, 0, \dots, 0)$ and $s > 0$ small. It is clear that $H^s_0$ foliates a neighborhood to one side of $H_0$. Since $\Phi_s$ is conformal and $H_0$ meets $\partial B$ orthogonally, the deformed hypersurface $H^s_0$ continues to meet $\partial B$ orthogonally (with respect to either metric). 

It remains to show that for $s$ sufficiently small, the mean curvature of $H^s_0$ with respect the metric $g$ is positive (taking the unit normal pointing in the $x_0$ direction). The mean curvature of $\Phi_s(H_0)$ with respect to $g$ is the same as the mean curvature of $H_0$ with respect to $\Phi_s^\ast g = e^{2u} \delta$ where 
\[
e^{u(x)}  := \Big(\frac{\bar{R}^2(\bar{R}^2 - s^2)}{\bar{R}^4 + 2 \bar{R}^2 s x_0 + |x|^2 s^2} \Big)\Big( \frac{2}{1+ \kappa |\Phi_s(x)|^2}\Big).
\]
By the formula for the mean curvature under conformal change (see Appendix \ref{sec:conformal-change} below) and the fact that $H_0$ is totally geodesic in the Euclidean metric $\delta_{ij}$, it suffices to show that $\frac{\partial u}{\partial x_0}(x) < 0$ for $x \in H_0$. Observe that $|\Phi_s(x)|^2 = \frac{\bar{R}^4 (|x|^2 + 2 x_0 s+ s^2)}{\bar{R}^4 + 2 \bar{R}^2 x_0 s+ |x|^2 s^2}$ and so, after some simplification, we obtain 
\begin{align*}
e^{u(x)} &=  \frac{2\bar{R}^2(\bar{R}^2 - s^2)}{\bar{R}^4 +2 \bar{R}^2 x_0 s+ |x|^2 s^2+ \kappa\bar{R}^4 (|x|^2 + 2 x_0 s+ s^2)} \\
& = \frac{2}{1 + \kappa |x|^2} - \frac{4( 1 +  \kappa \bar{R}^2)x_0s}{1 + \kappa |x|^2}  + O(s^2). 
\end{align*}
Hence, by differentiating, for any point $x \in H_0$ (so that $x_0 = 0$), we have
\[
e^u \frac{\partial u}{\partial x_0} \bigg|_{x_0 = 0}= -  \frac{4 (1+  \kappa \bar{R}^2)}{1 + \kappa |x|^2} s + O(s^2).
\]
Since $1 + \kappa \bar{R}^2  > 0$, we deduce that mean curvature of $H^s_0$ (away from $H_0$) is everywhere positive (for $s > 0$ sufficiently small). 
\end{proof}

\section{Cutoff Construction}
\label{sec:cutoff}

For the convenience of the reader, we recall the setting of Lemma \ref{lem:cutoff}. We let $(M, g)$ denote a complete $(n+1)$-dimensional Riemannian manifold and $N \subset M$ is an open subset such that $\partial N = D \sqcup S$ is the union of an open smooth subset $S$ and a closed subset $D$. We assume $T \in \mathbb{I}_n(N, \partial N)$ is an $n$-current which is stationary relative to $D$. The regular set $\Sigma = \reg T$ (consisting of both interior regular points and boundary regular points along $S$) has finite area $\mathcal{H}^n(\Sigma) < \infty$ and the relative singular set $\sing T = \spt T \setminus (\Sigma \cup (\spt \partial T\cap D))$ satisfies $\mathrm{dim}_{\mathcal{H}} \sing T < n -2$. We let $N_\delta := \{x \in N : d(x, D) \geq \delta\}$ denote the set of points in $N$ of distance at least $\delta$ from the fixed-boundary $D$.

\begin{proof}[Proof of Lemma \ref{lem:cutoff}]

The set $\sing T \cap N_\delta$ is the compact subset of (relative) singular points of distance at least $\delta$ to the fixed-boundary of $D$. 

By assumption $\mathrm{dim}_\mathcal{H} \sing T \cap N_\delta < n-2$. In particular, because $S_\delta$ is compact, for any $\bar{r}, \mu > 0$ we can find a finite covering 
\[
\sing T \cap N_\delta \subset \bigcup_{i =1}^P B_{r_i}(x_i)
\]
of $\sing T \cap N_\delta$ by balls $B_{r_i}(x_i)$ with centers $x_i \in N$ of radii $r_i \leq \bar{r}$ such that 
\[
\sum_{i=1}^P r_i^{n-2} < \mu. 
\]
In particular, by taking $\bar{r} < \frac{1}{4}\delta$, we may assume the centers $x_i$ has distance $d(x_i, D) > \frac{3}{4}\delta$ from the boundary $D$.

For each $i$, let $\rho_i := d(x_i, \cdot)$ denote the distance function to the point $x_i$. Next, let us introduce a smooth cutoff function $\eta :[0, \infty) \to [0, 1]$ which satisfies that $\eta = 0$ on $[0, 1]$ and $\eta = 1$ on $[2, \infty)$ with uniform bound $|\eta'| \leq 2$ on the transition $(1, 2)$. Define rescalings by $\eta_r(s) = \eta(s/r)$ which vanish on $[0, r]$, equal 1 on $[2r, \infty)$ and satisfy $|\eta'| \leq 2r^{-1}$.  Now consider functions 
\[
\phi_i(x) := \eta_{r_i}( \rho_i (x))= \eta(\rho_i(x)/r_i).
\] 
The functions $\phi_i$ are Lipschitz continuous, satisfy $\phi_i = 1$ outside $B_{2r_i}(x_i)$, $\phi_i = 0$ inside $B_{r_i}(x_i)$ and have $|\nabla \phi_i| \leq C r_i^{-1}$ in $B_{2r_i}(x_i) \setminus B_{r_i}(x_i)$ wherever the derivative of $\phi_i$ exists (which is for almost every $x$). Here $C$ is a uniform constant depending only upon the dimension. We define our desired cutoff function by 
\[
\phi(x):= \min_{i \in \{1, \dots, P\}} \phi_i(x).
\] 

By construction 
\[
\phi (x) = \begin{cases}
  0 & \text{ if } x \in \bigcup_i B_{r_i}(x_i)\\ 
1 & \text{ if } x \in N \setminus \bigcup_i B_{2r_i}(x_i)
\end{cases}. 
\]
Moreover, as the minimum of finitely many Lipschitz functions, $\phi$ is Lipschitz. The derivative $|\nabla \phi|$ satisfies (almost everywhere)
\[ |\nabla \phi|^2 \leq \max_i |\nabla \phi_i|^2 \leq C^2\max_i r_i^{-2} \mathbf{1}_{B_{2r_i}(x_i) \setminus B_{r_i}(x_i)} \leq C^2\sum_i r_i^{-2} \mathbf{1}_{B_{2r_i}(x_i) \setminus B_{r_i}(x_i)}. \]

In particular $\phi : N_\delta \to [0, 1]$ Lipschitz and it vanishes on $\bigcup_i B_{r_i}(x_i) \supset \sing T \cap N_\delta$ by construction. To obtain the integral estimate, we have
\[
\int_{\Sigma \cap N_\delta}  |\nabla^{\Sigma} \phi|^2 \leq C \sum_{i =1}^{P}  \int_{\Sigma \cap B_{2r_i}(x_i) \setminus B_{r_i}(x_i)}  r_i^{-2}  \leq C \sum_{i =1}^P \mathcal{H}^{n}\big(\Sigma \cap B_{2r_i}(x_i)\big) r_i^{-2}.
\]
Because $T$ is stationary, the monotonicity formula (cf. \cite{LZ21b} Theorem 2.1 for the free boundary case if $x_i \in S$) together with the assumption that $\mathcal{H}^{n-1}(\Sigma) <\infty$ implies a bound $\mathcal{H}^{n}\big(\Sigma \cap B_{2r}(x_i)\big) \leq C r^n$ for any $r \leq \min\{ r(\delta), 1\}$ and any $x \in N_\delta$. Here $r(\delta)$ is a constant depending upon $\delta$ ensuring $r$ is small enough so that $B_{2r}(x_i)$ avoids the fixed boundary $D$. The constant $C$ depends upon the geometry of $N$, the bound for $\mathcal{H}^n(\Sigma)$, and $\delta$, but not upon the radius $r$ once it is less than $\min\{r(\delta),1\}$.

Thus, assuming $\bar{r}$ from above is small enough (depending upon $\delta$) we obtain 
\[
\int_{\Sigma \cap N_\delta} |\nabla^{\Sigma} \phi|^2 \leq C \sum_{i =1}^P  r_i^{n-2} \leq C \mu. 
\]
The result now follows by taking $\mu$ sufficiently small so that $C \mu \leq \varepsilon$. 

\end{proof}

\section{Strong maximum principle for stationary varifolds} 
\label{sec:white}

In \cite{White10}, White proved a strong maximum principle for stationary varifolds, building on earlier joint work of White's with Solomon \cite{SW89}. In \cite{LZ21}, Li and Zhou proved a version of White's maximum principle that works in the free boundary setting. For the convenience of the reader, we restate a version of these results suitable to our purposes here, but refer the reader to the papers for details. 

Continuing with our conventions for $M, N, S, D$. Let $\Omega \subset N$ be a domain such that $\partial \overline{\Omega}$ is smooth almost everywhere. Let $\eta_{\partial \Omega}$ denote the inward-pointing unit normal at any smooth point.   

It is convenient to introduce the notion of minimising to first order; when $S=\emptyset$ we have the simple definition: A varifold $V$ in $\overline{\Omega}$ minimises area to first order in $(\Omega, D)$ provided that
\begin{equation}\label{eq:first-order-minimization}
\delta V(X) := \int \mathrm{div}_V (X) \, d\mu_V \geq 0 
\end{equation}
for all $C^1$ vector fields $X$ with compact support in $\overline{N} \setminus D$ such that $\langle X, \eta_{\partial \Omega} \rangle \geq 0$ at any smooth point in $\partial \Omega$. (These generate inward variations of $\Omega$.)

When $S$ may be nonempty, we restrict to variations which are also tangent along $S$: Let $\eta_S$ denote the inward-pointing unit normal for $S$ in $\overline{N}$. Then a varifold $V$ in $\overline{\Omega}$ minimises area to first order in $(\Omega\cup S, D)$ provided that \eqref{eq:first-order-minimization} holds for all $C^1$ vector fields $X$ with compact support in $\overline{N} \setminus D$ such that $\langle X, \eta_S \rangle = 0$ at any point in $S$ and $\langle X, \eta_{\partial \Omega} \rangle \geq 0$ at any smooth point in $\partial \Omega$.

Finally, let $\Sigma \subset \partial \Omega$ be a smooth component of the boundary of $\Omega$ with the property that $\Sigma$ is properly embedded in $N$ and, if $S$ is nonnempty, has orthogonal intersection along $S$ (as in Definition \ref{def:half-fb} (ii)). Assume $\Sigma$ is (weakly) mean convex (with respect to $\Omega$). That is, $\langle \vec{H}_\Sigma, \eta _\Sigma \rangle \geq 0$, where $\eta_\Sigma$ is the unit normal on $\Sigma$ which points into $\Omega$, and $\vec{H}_\Sigma$ is the mean curvature vector. 

\begin{theorem}[\cite{White10}, \cite{LZ21}]
\label{thm:maximum-principle}
Let $M, N, S, D,\Sigma, \Omega$ be as above. Suppose that $V$ is an integral varifold that minimises area to first order in $(\Omega \cup S, D)$. 

If either $\spt V \cap \Sigma \neq \emptyset$ or $\spt V\cap \partial \Sigma\cap S \neq \emptyset$, then in fact $\Sigma \subset \spt V$ and $\vec{H}_{\Sigma}=0$. Moreover, $V$ may be written as $W+W'$, where $\spt W = \Sigma$, and $\spt W' \cap \Sigma=\emptyset$. 
\end{theorem}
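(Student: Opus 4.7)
The plan is to follow the strategies of White \cite{White10} in the interior case and Li-Zhou \cite{LZ21} at free-boundary touching points. The core dichotomy to establish is: either $\spt V$ is disjoint from $\Sigma$, or $\Sigma \subset \spt V$. Since $\spt V \cap \Sigma$ is automatically relatively closed in $\Sigma$, it suffices to prove that it is also relatively open. Once $\Sigma \subset \spt V$ is in hand, the equality $\vec{H}_\Sigma = 0$ follows because strict mean convexity of $\Sigma$ at any point $q$ would, via the barrier construction below, force $\spt V$ away from $\Sigma$ near $q$, a contradiction.

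The openness step goes through a family of ``push-in'' hypersurfaces. Given an alleged touching point $p \in \spt V \cap \Sigma$, one constructs smooth hypersurfaces $\Sigma_\epsilon \subset \Omega$ ($\epsilon > 0$ small) that foliate a one-sided tubular neighborhood of $\Sigma$, are strictly mean convex with respect to the slab between $\Sigma_\epsilon$ and $\Sigma$, and meet $S$ orthogonally when relevant. If $\Sigma$ happens to be strictly mean convex at $p$, then level sets $\{\rho = \epsilon\}$ of the signed distance $\rho$ from $\Sigma$ suffice for small $\epsilon$; in the general weakly mean convex case, one perturbs these by a small positive normal deformation (for instance using the first eigenfunction of the Jacobi operator on a localizing subdomain) to recover strict convexity. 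The first-order-minimization of $V$ in $(\Omega\cup S, D)$, applied to inward-pointing admissible variations supported near $p$, prevents $\spt V$ from crossing any $\Sigma_\epsilon$; the Solomon--White contact lemma \cite{SW89} then identifies $V$ with a positive integer multiple of $[[\Sigma]]$ in a neighborhood of $p$, establishing openness.

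For a free-boundary touching point $p \in \spt V \cap \pr\Sigma \cap S$, we adapt the argument via doubling. In Fermi coordinates along $S$ near $p$, reflect the ambient across $S$; the orthogonality of $\Sigma$ to $S$ ensures that the reflected $\tilde\Sigma$ is a $C^{1,1}$ hypersurface through the doubling locus, and the tangency requirement on admissible vector fields implies that the reflected varifold $\tilde V$ is first-order-minimizing across a full interior neighborhood of $p$ in the doubled ambient. The interior argument then applies. Once $\Sigma \subset \spt V$ is established, the constancy theorem for integral varifolds gives an integer multiplicity $m$ for $V$ along each component of $\Sigma$; setting $W := m[[\Sigma]]$, the residual $W' := V - W$ is still an integral, first-order-minimizing varifold, and any contact between $\spt W'$ and $\Sigma$ would by the already established dichotomy yield yet another copy of $[[\Sigma]]$ inside $W'$, contradicting the choice of $m$. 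The main technical obstacle is the free-boundary doubling: verifying that the reflected push-in family $\tilde\Sigma_\epsilon$ remains strictly mean convex across the doubling interface, particularly at points of $\pr\Sigma\cap S$, requires careful Fermi-coordinate bookkeeping, and this is precisely the step where the orthogonality hypothesis is essential.
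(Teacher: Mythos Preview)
The paper does not actually prove this theorem: it is stated in Appendix~\ref{sec:white} as a restatement of results from \cite{White10} and \cite{LZ21}, with the explicit disclaimer ``we restate a version of these results suitable to our purposes here, but refer the reader to the papers for details.'' So there is no proof in the paper to compare against.

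That said, your outline is a faithful sketch of the arguments in the cited references. The interior case via Solomon--White barriers and the openness dichotomy is exactly White's approach; the decomposition $V = W + W'$ via the constancy theorem is also standard. One comment on the free-boundary step: the reflection/doubling technique you describe is one valid route, but note that after reflection the doubled hypersurface $\tilde\Sigma$ is only $C^{1,1}$ at the interface, and the Solomon--White contact argument as written in \cite{SW89} assumes smooth barriers. Li--Zhou \cite{LZ21} handle this by working directly in the half-space with admissible (tangent-to-$S$) vector fields rather than by reflection, which avoids the regularity loss. Your reflection approach can be made to work, but you would need to either smooth the doubled barrier while preserving strict mean convexity, or verify that the $C^{1,1}$ regularity suffices for the Solomon--White touching argument---this is the ``careful Fermi-coordinate bookkeeping'' you flag, and it is a genuine technical point rather than a formality.
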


\section{Conformal change formulae}
\label{sec:conformal-change}

Suppose $(M, g)$ is an $(n+1)$-dimensional Riemannian manifold and $\tilde{g} = e^{2\phi} g$ is a metric conformal to $g$. Then the curvature of the conformal metric is given by 
\[ \tilde{R}_{ijkl} = e^{2\phi}(R_{ijkl}- g_{ik} \Phi_{jl} - g_{jl} \Phi_{ik} + g_{il}\Phi_{jk} + g_{jk} \Phi_{il}) ,\]
where 
\[\Phi= \nabla^2\phi - d\phi \otimes d\phi + \frac{1}{2}|d\phi|_g^2 g.\]
The formula for the Ricci curvature and the Hessian of a function $h$ are given by
\[\wt{\Ric} = \Ric - (n-1)(\nabla^2 \phi - d\phi\otimes d\phi) - (\Lap\phi + (n-1)|d\phi|_g^2)g,\]
and
\[ \wt{\nabla}^2 h = \nabla^2 h  - d\phi \otimes dh - dh\otimes d\phi + g(\nabla \phi, \nabla h) \cdot g.\]
If $\Sigma \subset M$ is a hypersurface with normal $\nu$, then its second fundamental form $A_{\Sigma}(X, Y) := g( \nabla_X Y, \nu )$ under conformal change is given by  
\[\wt{A}_\Sigma = \e^{\phi}\left( A_\Sigma - g(\nabla \phi, \nu) g \right).\] 
In particular
\[
|\wt{A}_{\Sigma}|_{\tilde{g}}^2 =  e^{-2\phi} \big(|A_{\Sigma}|_g^2 - 2 g(\nabla \phi, \nu) H_{\Sigma} + n g(\nabla \phi, \nu)^2\big)
\]
and 
\[
\tilde{H}_{\Sigma} = e^{-\phi} \big(H_{\Sigma} - n g(\nabla \phi, \nu)\big).
\]

\bibliographystyle{amsalpha}
\bibliography{improved-frankels}

\end{document}